\newtheorem{defn0}{Definition}[section]
\newtheorem{prop0}[defn0]{Proposition}
\newtheorem{thm0}[defn0]{Theorem}
\newtheorem{lemma0}[defn0]{Lemma}
\newtheorem{corollary0}[defn0]{Corollary}
\newtheorem{example0}[defn0]{Example}
\newtheorem{remark0}[defn0]{Remark}
\newtheorem{conjecture0}[defn0]{Conjecture}
\newenvironment{definition}{\medskip \begin{defn0}}{\end{defn0}}
\newenvironment{proposition}{\medskip \begin{prop0}}{\end{prop0}}
\newenvironment{theorem}{\medskip \begin{thm0}}{\end{thm0}}
\newenvironment{lemma}{\medskip \begin{lemma0}}{\end{lemma0}}
\newenvironment{corollary}{\medskip \begin{corollary0}}{\end{corollary0}}
\newenvironment{example}{\medskip \begin{example0}\rm}{\end{example0}}
\newenvironment{remark}{ \medskip\begin{remark0}\rm}{\end{remark0}}
\newcommand{\tp}{\operatorname{top}}
\newcommand{\dg}{\operatorname{deg}}
\newcommand{\CC}{\mathbb{C}}
\newcommand{\EE}{\mathbb{E}}
\newcommand{\RR}{\mathbb{R}}
\newcommand{\pa}{\text{pa}}
\numberwithin{equation}{section}
\title{Third-Order Moment Varieties of Linear \\ Non-Gaussian Graphical Models}
\author[1]{Carlos Am\'endola}
\author[1]{Mathias Drton}
\author[1]{Alexandros Grosdos}
\author[1]{\\Roser Homs}
\author[2]{Elina Robeva}
\affil[1]{Department of Mathematics, Technical University of Munich, Germany}
\affil[2]{Department of Mathematics, University of British Columbia, Canada}
\date{}
\begin{document}

\maketitle

\begin{abstract}
In this paper we study linear non-Gaussian graphical models from the perspective of algebraic statistics. These are acyclic causal models in which each variable is a linear combination of its direct causes and independent noise. The underlying directed causal graph can be identified uniquely via the set of second and third order moments of all random vectors that lie in the corresponding model. Our focus is on finding the algebraic relations among these moments for a given graph.
We show that when the graph is a polytree these relations form a toric ideal.  
We construct explicit {\em trek-matrices} associated to 2-treks and 3-treks in the graph. Their entries are covariances and third order moments and their $2$-minors define our model set-theoretically.  Furthermore, we prove that their 2-minors also generate the vanishing ideal of the model. Finally, we describe the polytopes of third order moments and the ideals for models with hidden variables.
\end{abstract}
    
\section{Introduction}

Featuring prominently in a wide variety of applications, directed graphical models~\cite{handbook} capture intuitive cause-effect relations among a set of random variables by hypothesizing that each variable is a noisy function of its causes.  For a number of statistical tasks, such as model selection, it has proven useful to obtain insights about the algebraic structure of the moments of the joint distributions in the graphical model for a given graph~\cite{drton:2018,handbook:chap3,seth:book}.  A prominent example are results on algebraic relations among second moments, i.e., covariances, in models that postulate linear functional relationships among the variables \cite{drton:robeva:weihs:2020,sullivant2008,VanOmmenMooij_UAI_17}.  Available results include, in particular, a characterization of the vanishing of subdeterminants of the covariance matrix \cite{MR2662356,MR3044565}, which covers conditional independence in Gaussian models as a special case.

In contrast to earlier work in algebraic statistics, we conduct here a first systematic study on algebraic relations that also involve higher moments of a linear directed graphical model. Specifically, we focus on second and third order moments.  Higher moments are crucial for non-Gaussian models.  Indeed, in non-Gaussian cases, different models may have identical structure in second but not in all higher moments, allowing for the development of methods for causal discovery \cite{shimizu:lingam:2006,shimizu:directlingam:2011,handbook:chap:spirtes}.
In this area, an algebraic perspective features, for instance, in \cite{wang:drton:2020}, where a particular type of relations among second and third moments was used to design a model selection method suitable for high-dimensional data.  

We begin by reviewing linear non-Gaussian graphical models. Let $G=(V,E)$ be a directed acyclic graph (DAG), and let $(X_i$, $i\in V)$, be a collection of random variables that represent statistical observations indexed by the vertices in $V$.  A vertex $j\in V$ is a parent of vertex $i$ if there is an edge pointing from $j$ to $i$, i.e., if $(j,i)\in E$ which we will also write as $j\to i\in E$.  We denote the set of all \emph{parents} of $i$ by $\pa(i)$. The graph $G$ gives rise to the {\em linear structural equation model} consisting of the joint distributions of all random vectors $X = (X_i, i\in V)$ such that
\begin{equation}
\label{eq:sem}
    X_i=\sum_{j\in \pa(i)}\lambda_{ji}X_j+\varepsilon_i, \quad i\in V,
\end{equation}
where the $\varepsilon_i$ are mutually independent random variables representing stochastic errors. The errors are assumed to have expectation $\EE[\varepsilon_i]=0$, finite variance $\omega^{(2)}_i=\EE[\varepsilon_i^2]>0$, and finite third moment $\omega^{(3)}_i=\EE[\varepsilon_i^3]$.  No other assumption about their distribution is made and, in particular, the errors need not be Gaussian (in which case we would have $\EE[\varepsilon_i^3]=0$ by symmetry of the Gaussian distribution).  The coefficients $\lambda_{ji}$ in \eqref{eq:sem} are unknown real-valued parameters, and we fill them into a matrix $\Lambda=(\lambda_{ji})\in\mathbb{R}^{V\times V}$ by adding a zero entry when $(j,i)\notin E$. We denote the set of all such sparse matrices as $\RR^E$.  We note that for simplicity, and without loss of generality, the equations in \eqref{eq:sem} do not include a constant term, so we have $\EE[X_i]=0$ for all $i\in V$.

\begin{example}
\label{ex:12}
  The simplest non-trivial case pertains to two random variables $X_1$ and $X_2$ and the DAG $G$ with vertex set $V=\{1,2\}$ and edge set $\{1\to 2\}$.  Then the model specified via \eqref{eq:sem} postulates that $X_1=\varepsilon_1$ and $X_2=\lambda_{12}X_1+\varepsilon_2$.  Let $S=(s_{ij})=(\EE[X_iX_j])$ be the covariance matrix of $(X_1,X_2)$.  Using that $\varepsilon_1$ and $\varepsilon_2$ are independent and have zero mean, we find
  \begin{align}
  \label{eq:ex:12}
      s_{11}\equiv\EE[X_1^2]&=\omega^{(2)}_1, &s_{12}\equiv\EE[X_1X_2]&=\lambda_{12}\omega^{(2)}_{1}, &s_{22}\equiv\EE[X_2^2]&=\omega^{(2)}_2+\lambda_{12}^2\omega^{(2)}_{1}.
  \end{align}
  As we assume $\omega^{(2)}_1,\omega^{(2)}_2>0$, the covariance matrix $S$ given by \eqref{eq:ex:12} is positive definite.  Conversely, setting $\omega^{(2)}_1=s_{11}>0$, $\lambda_{12}=s_{12}/s_{11}$, $\omega^{(2)}_2=s_{22}-s_{12}^2/s_{11}>0$ shows that any positive definite matrix may arise as covariance matrix of $(X_1,X_2)$.
  By symmetry, the set of covariance matrices associated to the DAG $G'$ that contains the single edge $1\leftarrow 2$ is again the entire positive definite cone.  The two DAGs $G$ and $G'$, thus, induce identical second moment structure for $(X_1,X_2)$; this is a (trivial) instance of the general phenomenon of Markov equivalence of DAGs \cite{handbook}.
  
  Turning to third moments, for the graph $G$ with edge $1\to 2$  we have that
  \begin{align}
  \label{eq:ex:12-third}
      t_{111}&\equiv \EE[X_1^3]=\omega^{(3)}_1, &t_{112}&\equiv\EE[X_1^2X_2]=\lambda_{12}\omega^{(3)}_{1},\\ t_{122}&\equiv\EE[X_1X_2^2]=\lambda_{12}^2\omega^{(3)}_{1},
      &t_{222}&\equiv\EE[X_2^3]=\omega^{(3)}_2+\lambda_{12}^3\omega^{(3)}_{1}.
  \end{align}
  The third moments now exhibit an algebraic relation, namely,
  \begin{equation}
      \label{eq:ex:12-trelation}
  t_{111} t_{122}-t_{112}^2 =0.
  \end{equation}
  The ideal of all algebraic relations among the covariances and third moments is generated by 
  \begin{equation}
  \label{eq:ex:12-strelations}
  \{  s_{12} t_{111} - s_{11} t_{112},\;s_{12} t_{112} - s_{11} t_{122},\;t_{111} t_{122}-t_{112}^2\},
  \end{equation}
  or equivalently, by the $2$-minors of the matrix
\begin{equation}
    \label{ex12:Aij}
    A_{1,2} \;=\;
    \begin{pmatrix}
    s_{11}&t_{111}&t_{112}\\
    s_{12}&t_{112}&t_{122}
    \end{pmatrix}.
\end{equation}  
None of the polynomials in \eqref{eq:ex:12-strelations} vanishes over the set of moments for the model given by the second graph $G'$ (for which indices $1$ and $2$ would be switched).  Hence, the third moment structure {\em differs} for $G$ versus $G'$ and provides a way to distinguish the two hypotheses encoded by $G$ and $G'$.  
  
The first relation in \eqref{eq:ex:12-strelations} was used in \cite[p.~45]{wang:drton:2020}.  It arises from the fact that  $s_{12}/s_{11}=t_{112}/t_{111}=\lambda_{12}$ for graph $G$.  The ratio based on second moments corresponds to the usual least squares estimator.  For asymmetrically distributed $\varepsilon_1$, the ratio of third moments provides an alternative (less efficient) estimator.
\end{example}

Returning to our general setting and forming the random vectors $X=(X_i)_{i\in V}$ and $\varepsilon=(\varepsilon_i)_{i\in V}$, the model from \eqref{eq:sem} states that 
\begin{equation}
\label{eq:sem-vector}
    X \;=\; \Lambda^T X+\varepsilon.
\end{equation}
When $\Lambda\in\RR^E$, with $E$ being the edge set of a DAG, it holds that $\det(I-\Lambda)=1$ and \eqref{eq:sem-vector} always admits the unique solution
\begin{equation}
\label{eq:sem-vector-solved}
    X \;=\;(I-\Lambda)^{-T}\varepsilon.
\end{equation}
Let $\Omega^{(2)}=(\EE[\varepsilon_i\varepsilon_j])$ and $\Omega^{(3)}=(\EE[\varepsilon_i\varepsilon_j\varepsilon_k])$ be the covariance matrix and the tensor of third moments of $\varepsilon$, respectively.  Both $\Omega^{(2)}$ and $\Omega^{(3)}$ are diagonal, with the diagonal entries being $\Omega^{(2)}_{ii}=\omega^{(2)}_i=\EE[\varepsilon_i^2]>0$ and $\Omega^{(3)}_{iii}=\omega^{(3)}_i=\EE[\varepsilon_i^3]$.

\begin{proposition}
  The covariance matrix and the third moment tensor of the solution $X$ of \eqref{eq:sem-vector} are equal to 
  \begin{align*}
      S &=(s_{ij})= (I-\Lambda)^{-T}\Omega^{(2)}(I-\Lambda)^{-1}, \\
      T &=(t_{ijk}) =\Omega^{(3)}\bullet (I-\Lambda)^{-1}\bullet (I-\Lambda)^{-1}\bullet(I-\Lambda)^{-1},
  \end{align*}
  respectively. Here $\bullet$ denotes the Tucker product.  
\end{proposition}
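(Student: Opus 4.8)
The plan is to compute both moment arrays directly from the closed-form solution $X=(I-\Lambda)^{-T}\varepsilon$ in \eqref{eq:sem-vector-solved}, using only multilinearity of expectation together with the definitions $\Omega^{(2)}=\EE[\varepsilon\varepsilon^T]$ and $\Omega^{(3)}_{abc}=\EE[\varepsilon_a\varepsilon_b\varepsilon_c]$. Writing $M=(I-\Lambda)^{-T}$ so that $X=M\varepsilon$ and $M^T=(I-\Lambda)^{-1}$, I first note that all required moments of $X$ are finite: each $X_i$ is a fixed finite linear combination of the $\varepsilon_j$, and the $\varepsilon_j$ have finite third moments by assumption, so every entry of $S$ and $T$ is a finite sum of products of entries of $M$ with mixed third moments of $\varepsilon$, which exist by H\"older's inequality.

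For the covariance matrix the computation is immediate, since the constant matrices $M$ and $M^T$ pass through the expectation by linearity:
\[
S=\EE[XX^T]=\EE[M\varepsilon\varepsilon^T M^T]=M\,\EE[\varepsilon\varepsilon^T]\,M^T=M\,\Omega^{(2)}\,M^T=(I-\Lambda)^{-T}\Omega^{(2)}(I-\Lambda)^{-1}.
\]
For the third moment tensor I work entrywise. Expanding $X_i=\sum_a M_{ia}\varepsilon_a$ and using multilinearity of expectation gives
\[
t_{ijk}=\EE[X_iX_jX_k]=\sum_{a,b,c}M_{ia}M_{jb}M_{kc}\,\EE[\varepsilon_a\varepsilon_b\varepsilon_c]=\sum_{a,b,c}\Omega^{(3)}_{abc}\,M_{ia}M_{jb}M_{kc}.
\]
Recognizing $M_{ia}=(M^T)_{ai}=\big((I-\Lambda)^{-1}\big)_{ai}$, the right-hand side is precisely the $(i,j,k)$ entry of the Tucker product $\Omega^{(3)}\bullet(I-\Lambda)^{-1}\bullet(I-\Lambda)^{-1}\bullet(I-\Lambda)^{-1}$, which proves the claimed formula.

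The only step requiring care is matching the index convention of the Tucker product $\bullet$: one must verify that contracting mode $\ell$ of $\Omega^{(3)}$ against the matrix $(I-\Lambda)^{-1}=M^T$ reproduces the factor $M_{\cdot a}$ appearing above, i.e.\ that each contraction runs over the first index of $(I-\Lambda)^{-1}$. This is pure bookkeeping rather than a genuine obstacle, and I expect no real difficulty beyond it. I would also remark that the diagonality of $\Omega^{(2)}$ and $\Omega^{(3)}$ plays no role in the derivation; it merely specializes the formulas, collapsing the triple sum for $t_{ijk}$ to the single sum $\sum_a \omega^{(3)}_a\,M_{ia}M_{ja}M_{ka}$.
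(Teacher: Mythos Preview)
Your proof is correct and follows essentially the same approach as the paper: both expand $X=(I-\Lambda)^{-T}\varepsilon$ entrywise and push the deterministic coefficients through the expectation to land on the Tucker-product formula. The only cosmetic difference is that you handle the covariance in matrix form $S=M\,\EE[\varepsilon\varepsilon^T]\,M^T$ while the paper does that case entrywise too; your added remarks on finiteness of moments and on diagonality being inessential are harmless extras.
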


This fact follows from standard results about how moments of random vectors change after linear transformation; we include a proof to recall this calculus and the definition of the Tucker product.
\begin{proof}
    With $X_i=\sum_{k\in V} [(I-\Lambda)^{-T}]_{ik}\varepsilon_k$, the covariances are
    \begin{align*}
        s_{ij}=\EE[X_iX_j] &= \sum_{a,b\in V} [(I-\Lambda)^{-T}]_{ia}[(I-\Lambda)^{-T}]_{jb}\EE[\varepsilon_a\varepsilon_b]&= \sum_{a,b\in V} [(I-\Lambda)^{-T}]_{ia}[(I-\Lambda)^{-T}]_{jb}\Omega^{(2)}_{ab}
    \end{align*}
    as claimed by the matrix product formula.  Similarly, the third moments are
    \begin{align*}
        t_{ijk}=\EE[X_iX_jX_k] &= \sum_{a,b,c\in V} [(I-\Lambda)^{-T}]_{ia}[(I-\Lambda)^{-T}]_{jb}[(I-\Lambda)^{-T}]_{kc}\EE[\varepsilon_a\varepsilon_b\varepsilon_c] \\
        &=\sum_{a,b,c\in V} [(I-\Lambda)^{-T}]_{ia}[(I-\Lambda)^{-T}]_{jb}[(I-\Lambda)^{-T}]_{kc}\Omega^{(3)}_{abc},
    \end{align*}
    which by definition is the $(a,b,c)$-entry in the given Tucker product.    
\end{proof}

We emphasize that as we are assuming positive error variances, $\EE[\varepsilon_i^2]>0$, the matrix $\Omega^{(2)}$ is positive definite and the same is true for the covariance matrix $S$ of $X$.  Since $\Omega^{(3)}$ is diagonal, the third moment tensor $T$ of $X$ is a symmetric tensor of symmetric tensor rank at most $|V|$; this need not be the case for a general $V\times V\times V$ tensor \cite{comon:2008}.  In the sequel, we write $\mathit{PD}(V)$ for the positive definite cone in $\RR^{V\times V}$ and $\mathrm{Sym}_3(V)$ for the space of symmetric tensors in $\RR^{V \times V \times V}$.

\begin{definition}
  Let $G = (V,E)$ be a DAG.
  The \emph{third-order moment model} of $G$ is the set $\mathcal{M}^{\leq 3}(G)$ that comprises all pairs of covariance matrices and third moment tensors that are realizable under the linear structural equation model given by $G$. That is,
\begin{align*}
\mathcal M^{\leq 3}(G) = \big\{ &\big( (I-\Lambda)^{-T}\Omega^{(2)}(I-\Lambda)^{-1},\; \Omega^{(3)}\bullet (I-\Lambda)^{-1}\bullet (I-\Lambda)^{-1}\bullet(I-\Lambda)^{-1}\big) : \\
&\Omega^{(2)}\in\mathit{PD}(V) \text{ diagonal},\; \Omega^{(3)}\in \mathrm{Sym}_3(V) \text{ diagonal}, \; \Lambda\in\mathbb R^E\big\} \;\subseteq\;\mathit{PD}(V)\times \mathrm{Sym}_3(V).
\end{align*}
Furthermore, the \emph{third-order moment ideal} of $G$ is the ideal $\mathcal I^{\leq 3}(G)$ of polynomials in the entries $S=(s_{ij})$ and $T=(t_{ijk})$ that vanish when  $(S,T)\in\mathcal M^{\leq 3}(G)$.
\end{definition}

In this paper, we are focusing on models $\mathcal M^{\leq 3}(G)$ in the case where $G$ is a polytree, i.e., a directed acyclic graph whose underlying undirected graph is a tree.  The rest of the paper is organized as follows.
In Section~\ref{sec:simple-trek} we show how to parametrize the third-order moment model using the simple trek rule (\Cref{th:param}), a variation of the trek rule that reveals the toric structure of $\mathcal I^{\leq 3}(G)$ in the case of polytrees (\Cref{cor:toricTrees}). Sections from~\ref{sec:trek-matrices} to Section~\ref{sec:polytopes} mainly focus on polytrees and exploit their algebraic-geometric properties.
 Section~\ref{sec:trek-matrices} introduces trek-matrices (\Cref{def:matrices}) and provides algebraic relations that are sufficient to characterize when a pair of second and third order moments belongs to the model $\mathcal M^{\leq 3}(G)$ (\Cref{th:subset}).  In Section~\ref{sec:idealgens}, we obtain a Markov basis of the vanishing ideal of $\mathcal M^{\leq 3}(G)$ (\Cref{th:generators}).   Section~\ref{sec:latent} focuses on DAGs with latent variables. For polytrees, we can provide generators of the ideal of observed variables in terms of the trek-matrices (\Cref{prop:latent}).
In Section~\ref{sec:polytopes} we present the third-order moment polytope associated to a polytree and highlight the differences with respect to the covariance polytope in \cite{sullivant2008}. Finally, Section~\ref{sec:nontrees} displays several computational examples of non-trees.  

\section{Simple Trek Parametrization}
\label{sec:simple-trek}

The graphs that appear in this paper are acyclic, 
i.e., they do not contain a closed directed path. 
We may thus assume that the vertices of the graph are topologically ordered,
so that if $i \to j \in E$, then $i<j$. We now define the concept of \emph{multitrek} introduced in \cite{robeva2020}.

\begin{definition}
\label{def:trek}
A \emph{$k$-trek} between $k$ vertices $i_1,\dots,i_k$ is an ordered collection of directed paths $\tau = (P_1,\dots,P_k)$ where $P_r$ has sink $i_r$ and they all share the same source $v$. 
We call $v$ the \emph{top} of the trek and denote it by $\tp{\tau}$.  A $k$-trek is called \emph{simple} if its top node is the only node lying on all the $k$ directed paths that form the trek.
We denote the set of simple $k$-treks between  $i_1,\dots,i_k$ by  $\mathcal{T}(i_1,\dots,i_k)$.
\end{definition}

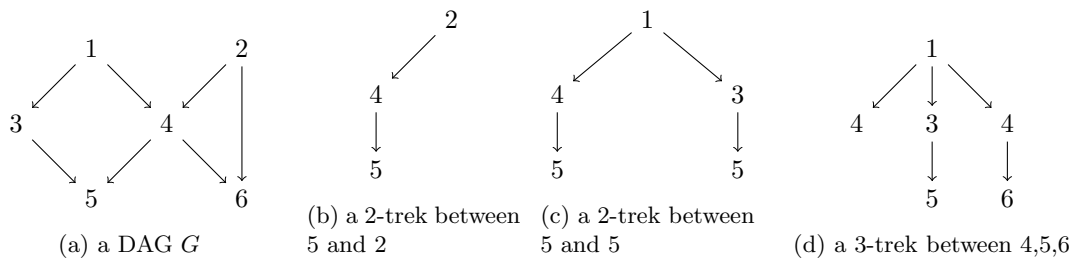
\begin{figure}[h]
\centering
\begin{subfigure}[b]{0.28\textwidth}
         \centering
\begin{tikzpicture}
\node(1) at (1,2) []    {$1$};
\node(2) at (3,2) []      {$2$};
\node(3) at (0,1) []      {$3$};
\node(4) at (2,1) []      {$4$};
\node(5) at (1,0) []      {$5$};
\node(6) at (3,0) []      {$6$};
\draw
(1) edge[->] (3)
(1) edge[->] (4) 
(2) edge[->] (4) 
(2) edge[->] (6) 
(3) edge[->] (5) 
(4) edge[->] (5) 
(4) edge[->] (6) ;
\end{tikzpicture}
         \caption{a DAG $G$}
         \label{fig:trekdag}
     \end{subfigure}
     \hfill
          \begin{subfigure}[b]{0.19\textwidth}
         \centering
         \begin{tikzpicture}
\node(2) at (1,2) []    {$2$};
\node(4) at (0,1) []      {$4$};
\node(5) at (0,0) []      {$5$};
\draw
(1) edge[->] (4)
(4) edge[->] (5);
\end{tikzpicture}
         \caption{a $2$-trek between 5 and 2}
         \label{fig:trek3}
     \end{subfigure}
     \hfill
 \begin{subfigure}[b]{0.19\textwidth}
         \centering
         \begin{tikzpicture}
\node(1) at (1.2,2) []    {$1$};
\node(4) at (0,1) []      {$4$};
\node(5) at (0,0) []      {$5$};
\node(3) at (2.4,1) []      {$3$};
\node(5b) at (2.4,0) []      {$5$};
\draw
(1) edge[->] (4)
(1) edge[->] (3) 
(4) edge[->] (5)
(3) edge[->] (5b);
\end{tikzpicture}
         \caption{a $2$-trek between 5 and 5}
         \label{fig:trek2}
     \end{subfigure}
     \hfill
     \begin{subfigure}[b]{0.28\textwidth}
         \centering
         \begin{tikzpicture}
\node(1) at (1,2) []    {$1$};
\node(4a) at (0,1) []      {$4$};
\node(3) at (1,1) []      {$3$};
\node(4b) at (2,1) []      {$4$};
\node(5) at (1,0) []      {$5$};
\node(6) at (2,0) []      {$6$};
\draw
(1) edge[->] (4a)
(1) edge[->] (3) 
(1) edge[->] (4b)
(3) edge[->] (5) 
(4b) edge[->] (6);
\end{tikzpicture}
         \caption{a $3$-trek between 4,5,6}
         \label{fig:trek4}
     \end{subfigure}
     \hfill
\caption{A DAG and some of its treks.}
\label{fig:trek}
\end{figure}
\begin{example}
In Figure \ref{fig:trek} we depict a graph $G$ and three of its treks.
The paths of the treks can use the same edge of the graph several times, as seen in Fig.~\ref{fig:trek4}.
Note also that the directed paths forming the trek might also be empty,
as is the case for the right path in Fig.~\ref{fig:trek3}.
In fact, the only simple $k$-trek between the $k$ vertices $i, \dots, i$ is the trek consisting of empty paths,
making the trek in Fig.~\ref{fig:trek2} nonsimple.
\end{example}

The statistical significance of 2-treks is that they provide a combinatorial way to parametrize covariance matrices through what is known as the trek rule \cite[Section 2]{MR2662356}.
This extends analogously to third-order moment tensors via $3$-treks.

Furthermore, an advantage of working with DAGs is that one can simplify 
the description given by the trek rule by focusing only on simple treks.
In \cite[Section 2]{sullivant2008} it is shown that simple 2-treks 
give a different parametrization of the covariance matrix,
by introducing a new set of indeterminates $a_i$ for each vertex in the graph defined by
\begin{equation}\label{eq:transfai}
    a_i:=\sum_{l_1\in pa(i)}\sum_{l_2\in pa(i)}\lambda_{l_1,i}\lambda_{l_2,i}s_{l_1,l_2}+\omega_i^{(2)}.
\end{equation}
This triangular,
and thus invertible,
transformation allows one to recursively pass from the diagonal entries in the matrix $\Omega^{(2)}$ to the $a_i$.
The same is true for the entries of the tensor $\Omega^{(3)}$, 
via the transformation (\ref{eq:transfbi}) in the proof of Proposition \ref{th:param} below.
We therefore obtain the shorter \emph{simple trek rule parametrization}
induced by the polynomial ring map
$$\begin{array}{rrcl}
\phi_G: & \CC[s_{ij},t_{ijk}\mid 1\leq i\leq j\leq k\leq n] & \rightarrow & \CC[a_i,b_i,\lambda_{ij}\mid i\rightarrow j\in E],\\
& s_{ij} & \mapsto & \displaystyle\sum_{\tau \in \mathcal{T}(i,j)} a_{\tp(T)}\displaystyle\prod_{k\rightarrow l\in \tau}\lambda_{kl},\\
& t_{ijk} & \mapsto & \displaystyle\sum_{\tau \in \mathcal{T}(i,j,k)}b_{\tp(T)}\displaystyle\prod_{m\rightarrow l\in \tau}\lambda_{ml}.
\end{array}$$
Dually, we get the rational parametrization
\begin{equation*}
\begin{array}{rrcl}
\phi_G^\ast: & \RR^{V}\times\RR^{V}\times\RR^{E} & \rightarrow & \RR^{\binom{n+1}{2}}\times\RR^{\binom{n+2}{3}}\\
& (a,b,\lambda) & \mapsto & \left(\left(\displaystyle\sum_{\tau\in \mathcal{T}(i,j)} a_{\tp(T)}\displaystyle\prod_{k\rightarrow l\in \tau}\lambda_{kl}\right)_{ij},  \left(\displaystyle\sum_{\tau\in \mathcal{T}(i,j,k)}b_{\tp(\tau)}\displaystyle\prod_{m\rightarrow l\in \tau}\lambda_{ml}\right)_{ijk}\right).
\end{array}
\end{equation*}

\begin{proposition}[Simple trek rule]\label{th:param} Let $G=(V,E)$ be a DAG, and let $\phi_G$ the ring morphism defined above. Then
the map $\phi_G^\ast$ gives a parametrization of the model $\mathcal{M}^{\leq 3}(G)$, and, therefore, $\mathcal I^{\leq 3}(G)=\ker \phi_G$.
\end{proposition}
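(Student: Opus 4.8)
The plan is to deduce the statement from the classical trek rule by performing an invertible change of coordinates that collapses arbitrary treks into simple ones. First I would record the full trek rule. Expanding the Neumann series $(I-\Lambda)^{-1}=\sum_{m\ge 0}\Lambda^m$ (a finite sum, since $\Lambda$ is nilpotent, being strictly triangular in the topological order) and using that $\Omega^{(2)}$ and $\Omega^{(3)}$ are diagonal, the Proposition on $S$ and $T$ yields
\[
s_{ij}=\sum_{\tau}\omega^{(2)}_{\tp(\tau)}\prod_{k\to l\in\tau}\lambda_{kl},\qquad t_{ijk}=\sum_{\tau}\omega^{(3)}_{\tp(\tau)}\prod_{m\to l\in\tau}\lambda_{ml},
\]
where now the sums range over \emph{all} (not necessarily simple) $2$-treks between $i,j$ and $3$-treks between $i,j,k$, respectively. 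Setting $i=j$ (resp. $i=j=k$) and comparing with \eqref{eq:transfai} (resp. its third-order analogue) identifies $a_i=s_{ii}=\EE[X_i^2]$ and $b_i=t_{iii}=\EE[X_i^3]$.

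The combinatorial core is a factorization of treks. Given any $k$-trek $\tau$ (for $k\in\{2,3\}$) with top $v$, let $w$ be the topologically last vertex lying on all $k$ of its paths; because every directed path respects the topological order, the vertices common to all paths are totally and consistently ordered, so $w$ is well defined. Splitting each path at $w$ writes $\tau$ uniquely as the concatenation of a $k$-trek from $w$ to $w$ (the prefixes, with top $v$) and a $k$-trek between $i_1,\dots,i_k$ with top $w$ whose only vertex common to all $k$ paths is $w$ (the suffixes), i.e. a simple trek. Acyclicity guarantees that a strict suffix vertex (a proper descendant of $w$) cannot coincide with a prefix vertex (a vertex weakly preceding $w$), so the concatenation is a bijection and both the edge-weight product and the $\omega_{\tp}$-weight factor accordingly. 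Summing over all treks and using the full trek rule again to recognise the prefix sum as $s_{ww}=a_w$ (resp. $t_{www}=b_w$), I obtain exactly $\phi_G(s_{ij})$ and $\phi_G(t_{ijk})$. Hence the original parametrization $\psi\colon(\Omega^{(2)},\Omega^{(3)},\Lambda)\mapsto(S,T)$ factors as $\psi=\phi_G^\ast\circ\theta$, where $\theta$ sends $(\Omega^{(2)},\Omega^{(3)},\Lambda)$ to $(a,b,\lambda)$ with $\lambda=\Lambda|_E$.

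Next I would check that $\theta$ is invertible. For fixed $\Lambda$, the map $\Omega^{(2)}\mapsto a$ is triangular with unit diagonal in the topological order, since $a_i-\omega^{(2)}_i$ depends only on the $\omega^{(2)}_j$ with $j<i$ (all relevant ancestors precede $i$); the same holds for $\Omega^{(3)}\mapsto b$. Thus $\theta$ is a polynomial automorphism, and as $(\Omega^{(2)},\Omega^{(3)},\Lambda)$ ranges over the model's parameter domain the image $(a,b,\lambda)$ fills a Euclidean-open, hence Zariski-dense, subset of $\RR^{V}\times\RR^{V}\times\RR^{E}$. Consequently $\mathcal M^{\le 3}(G)=\mathrm{im}(\psi)$ and $\mathrm{im}(\phi_G^\ast)$ have the same Zariski closure, which is what it means for $\phi_G^\ast$ to parametrize the model.

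Finally, the ideal identity follows formally: since $\CC[a,b,\lambda]$ is an integral domain, $\ker\phi_G$ is prime, hence radical, and the Zariski closure of the image of the associated map equals the variety $V(\ker\phi_G)$. Because the real image contains a smooth Euclidean-open piece, it is Zariski dense in this complex variety, so the real vanishing ideal $\mathcal I^{\le 3}(G)$ coincides with $\ker\phi_G$. The step I expect to be most delicate is the trek factorization itself: verifying for $3$-treks that the last all-paths-common vertex is well defined and that peeling it off leaves a genuinely \emph{simple} suffix trek (recalling that simplicity forbids only triple, not pairwise, intersections), together with the acyclicity argument ensuring the concatenation neither creates nor destroys shared vertices.
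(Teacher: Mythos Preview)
Your proof is correct and takes a genuinely different route from the paper. The paper argues by induction on the topological order: it writes down the recursion $t_{ijr}=\sum_{l\in\pa(r)}\lambda_{lr}t_{ijl}$ (and its variants for repeated indices), defines $b_i$ so that $t_{iii}=b_i$, and then checks inductively that appending the edge $l\to r$ to a simple $3$-trek between $i,j,l$ produces exactly the simple $3$-treks between $i,j,r$; it defers to Sullivant for the covariance part and leaves the passage from ``parametrization'' to $\mathcal I^{\le 3}(G)=\ker\phi_G$ implicit. Your argument instead exhibits a direct combinatorial bijection: every $k$-trek factors uniquely as a $k$-trek from its top to the last all-paths-common vertex $w$, followed by a simple $k$-trek with top $w$; summing collapses the prefix contribution to $s_{ww}=a_w$ or $t_{www}=b_w$ and yields $\phi_G$ immediately. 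This buys you a cleaner explanation of \emph{why} the simple trek rule holds (and handles $k=2$ and $k=3$ uniformly), and your explicit treatment of $\theta$ as a triangular automorphism together with the Zariski-density argument makes the ideal identity $\mathcal I^{\le 3}(G)=\ker\phi_G$ rigorous where the paper simply says ``therefore''. The paper's induction, on the other hand, is shorter and avoids having to justify that the concatenation map is a bijection. The point you flagged as delicate---well-definedness of the last common vertex and simplicity of the suffix---is indeed the only place requiring care, and your acyclicity argument handles it correctly.
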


\begin{proof}
For $ i\leq j<k$, the third moments are
$$t_{ijk}:=\EE(X_iX_jX_k)=\sum_{l\in pa(k)}\lambda_{lk}t_{ijl},$$
for $i<j$ we have \[t_{ijj}:=\EE(X_iX_j^2)=\sum_{l_1\in pa(j)}\sum_{l_2\in pa(j)}\lambda_{l_1j}\lambda_{l_2j}t_{il_1l_2},\]
and for all $i$ we have
$$t_{iii}:=\EE(X_i^3)=\sum_{l_1\in pa(i)}\sum_{l_2\in pa(i)}\sum_{l_3\in pa(i)}\lambda_{l_1,i}\lambda_{l_2,i}\lambda_{l_3,i}t_{l_1l_2l_3}+\omega_i^{(3)}.$$
Set
\begin{equation}\label{eq:transfbi}
    b_i:=\sum_{l_1\in pa(i)}\sum_{l_2\in pa(i)}\sum_{l_3\in pa(i)}\lambda_{l_1,i}\lambda_{l_2,i}\lambda_{l_3,i}t_{l_1,l_2,l_3}+\omega_i^{(3)}.
\end{equation}
An analogous construction for the entries of the covariance matrix, $s_{ij}$ via (\ref{eq:transfai})
is displayed in \cite{sullivant2008}. 
In Proposition 2.3 of that paper, Sullivant proves the simple trek rule parametrization for covariance matrices by checking that 
the covariance matrices in the image of $\phi_G^\ast$ for valid parameters $a$ are precisely the covariance matrices in $\mathcal{M}^{\leq 3}(G)$.

Next we prove that the set of third moment tensors in $\mathcal{M}^{\leq 3}(G)$ is the second component of $\phi_G^\ast(\RR^{V}\times\RR^{V}\times\RR^{E})$.
By definition, $t_{iii}=b_i$ for all $i$. Assume that 
$$t_{ijk}=\left(\displaystyle\sum_{\tau \in \mathcal{T}(i,j,k)}b_{\tp(\tau)}\displaystyle\prod_{m\rightarrow l\in \tau}\lambda_{ml}\right)_{ijk}$$ for all $1\leq i\leq j\leq k<r$. Then,
\begin{equation*}
t_{ijr}  = \sum_{l\in pa(r)}\lambda_{lr}t_{ijl}  = \sum_{l\in pa(r)}\lambda_{lr}\sum_{\tau \in \mathcal{T}(i,j,l)}b_{\tp(\tau)}\prod_{m\rightarrow q\in \tau}\lambda_{mq}  = \sum_{\tau \in \mathcal{T}(i,j,r)}b_{\tp(\tau)}\prod_{m\rightarrow q\in \tau}\lambda_{mq}.
\end{equation*}
The last equality comes from the fact that any $3$-trek in $\mathcal{T}(i,j,r)$ is a union of a $3$-trek in $\mathcal{T}(i,j,l)$ and an edge $l\rightarrow r$ for any parent $l$ of $r$. 
Similarly,
\begin{equation*}
    t_{irr} = \sum_{l\in\pa(r)}\lambda_{lr}t_{ilr} = \sum_{l\in\pa(r)}\lambda_{lr}\sum_{\tau\in\mathcal T(i,l,r)}b_{\tp(\tau)}\prod_{m\to q\in\tau}\lambda_{mq} = \sum_{\tau\in\mathcal T(i,r,r)}b_{\tp(\tau)}\prod_{m\to q\in \tau}\lambda_{mq},
\end{equation*}
which completes the induction.
\end{proof}

\begin{example} Consider the DAG $G$ in \Cref{fig:triangle}. 
The corresponding map $\phi_G$ is
$$\begin{array}{rl}
s_{ii} & \mapsto a_i \\
t_{iii} & \mapsto b_i \\
s_{12} & \mapsto a_1\lambda_{12}\\
s_{13} & \mapsto a_1(\lambda_{13}+\lambda_{12}\lambda_{23})\\
s_{23} & \mapsto a_1\lambda_{13}\lambda_{12}+a_2\lambda_{23}\\
t_{123} & \mapsto b_1\lambda_{12}(\lambda_{13}+\lambda_{12}\lambda_{23})\\
\vdots
\end{array}$$
The ideal $\mathcal{I}^{\leq 3}(G)$ is generated by 14 quadratic binomials and 14 cubics either in variables $t$ or mixed variables, such as $t_{123}^2-t_{122}t_{133}$, $s_{13}t_{123}-s_{12}t_{133}$,
$t_{123}t_{133}t_{222}-2t_{122}t_{133}t_{223}+t_{113}t_{223}^2+t_{122}t_{123}t_{233}-t_{113}t_{222}t_{233}$, $s_{23}t_{113}t_{222}-s_{23}t_{112}t_{223}-s_{22}t_{113}t_{223}+s_{11}t_{223}^2+s_{22}t_{112}t_{233}-s_{11}t_{222}t_{233}.$
\end{example}

\begin{figure}[h]
\centering
\begin{subfigure}[b]{0.45\textwidth}
         \centering
\begin{tikzpicture}
\node(v1) at (0,0) []      {$1$};
\node(v2) at (1,1) []      {$2$};
\node(v3) at (2,0) []      {$3$};
\draw(v1) edge[->] (v2)   (v1) edge[->] (v3) (v2) edge[->] (v3);
\end{tikzpicture}
         \caption{a DAG that is not a polytree}
         \label{fig:triangle}
     \end{subfigure}
     \hfill
          \begin{subfigure}[b]{0.45\textwidth}
         \centering
         \begin{tikzpicture}
\node(1) at (0,1) []      {$1$};
\node(2) at (-1.5,0) []      {$2$};
\node(3) at (-0.5,0) []      {$3$};
\node(4) at (0.5,0) []      {$4$};
\node(5) at (1.5,0) []      {$5$};
\draw(1) edge[->] (2)   (1) edge[->] (3) (1) edge[->] (4) (1) edge[->] (5);
\end{tikzpicture}
         \caption{a polytree}
         \label{fig:tetrapus}
     \end{subfigure}
\caption{Two DAGs}
\label{fig:dags}
\end{figure}
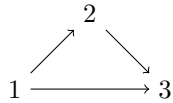
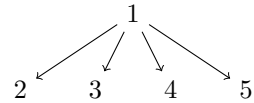 

In case the graph $G$ is a \emph{polytree}, the simple trek parametrization simplifies to a monomial map as $\mathcal{T}(i_1,\dots,i_k)$ has at most one element: the unique simple $k$-trek between $i_1,\ldots, i_k$, if it exists.  Therefore, we have the following fact.

\begin{corollary}\label{cor:toricTrees}
If $G$ is a polytree, then the ideal $\mathcal{I}^{\leq 3}(G)$ is toric.
\end{corollary}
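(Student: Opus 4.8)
The plan is to read off the conclusion from the already-established identification $\mathcal I^{\leq 3}(G)=\ker\phi_G$ in \Cref{th:param}, together with the standard principle that an ideal is toric precisely when it is the kernel of a \emph{monomial map}, i.e.\ a ring homomorphism carrying each variable to a single monomial (possibly $0$). Thus everything reduces to checking that, when $G$ is a polytree, the map $\phi_G$ is a monomial map rather than a map to genuine polynomials with several terms.

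First I would confirm the combinatorial claim stated in the prose above, that for a polytree $\mathcal T(i_1,\dots,i_k)$ contains at most one element. Because the undirected graph underlying $G$ is a tree, any two vertices are joined by a unique undirected path, so a directed path with prescribed source and sink is unique whenever it exists. In a simple $k$-trek the paths $P_1,\dots,P_k$ meet only at the top $\tp(\tau)$, so their union is the unique minimal subtree spanning $\{i_1,\dots,i_k\}\cup\{\tp(\tau)\}$, and $\tp(\tau)$ is forced to be the unique vertex of that subtree from which every sink is reachable along directed paths meeting only at it. Hence both the top and the constituent paths are determined by the tuple $(i_1,\dots,i_k)$, and there is at most one simple $k$-trek.

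Consequently each defining sum in $\phi_G$ has at most one summand: $\phi_G(s_{ij})$ equals the single monomial $a_{\tp(\tau)}\prod_{k\to l\in\tau}\lambda_{kl}$ when the simple $2$-trek $\tau$ between $i,j$ exists and equals $0$ otherwise, and likewise $\phi_G(t_{ijk})$ is a single monomial in $b_{\tp(\tau)}$ and the $\lambda$'s, or $0$. Therefore $\phi_G$ is a monomial map, its kernel is generated by binomials (the relations coming from pairs of variables sent to the \emph{same} monomial), and $\mathcal I^{\leq 3}(G)=\ker\phi_G$ is toric.

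The only point demanding care --- and the one I regard as the genuine obstacle --- is the presence of variables sent to $0$, which happens exactly for the tuples admitting no simple trek (for instance a pair separated by a collider $i\to v\leftarrow j$). I would dispatch this by two observations. First, the image of $\phi_G$ is a subring of the integral domain $\CC[a_i,b_i,\lambda_{kl}]$, so $\ker\phi_G$ is prime. Second, the kernel of any monomial map is a binomial ideal, the vanishing variables contributing the monomial generators $s_{ij}$ or $t_{ijk}$ (themselves binomials $s_{ij}-0$). Hence $\mathcal I^{\leq 3}(G)$ is a prime binomial ideal realized as the kernel of a monomial map, which is toric under any of the usual conventions; should one insist on the strict Laurent-monomial definition, one restricts to the subring generated by the variables with nonzero image, on which $\phi_G$ is a bona fide toric parametrization.
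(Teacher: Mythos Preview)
Your proposal is correct and follows the same approach as the paper: the paper's ``proof'' is simply the sentence immediately preceding the corollary, noting that for a polytree $\mathcal T(i_1,\dots,i_k)$ has at most one element so $\phi_G$ is a monomial map. You have filled in the details the paper leaves implicit --- the uniqueness argument for simple treks in a polytree, the handling of variables sent to $0$, and the primality check --- but the underlying idea is identical.
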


\begin{example}
Let $G$ be the graph in \Cref{fig:tetrapus}.
The corresponding ideal $\mathcal{I}^{\leq 3}(G)$ is toric and therefore generated by binomials.  We will revisit this example in the next section, where we give a 
concrete generating set
given as $2$-minors of suitable matrices.
\end{example}

\section{Low-Rank Trek-Matrices}\label{sec:trek-matrices}

In this section we find equations defining the model $\mathcal M^{\leq 3}(G)$ when $G$ is a polytree. 
In this case, the model has a monomial parametrization, and we show that it is cut out by quadratic binomials.
These arise from 2-minors of matrices $A_{i,j}$ whose columns encode certain 2-treks and 3-treks involving $i$ and $j$ as endpoints.
In fact these matrices are crucial not only for finding polynomials that cut out the variety,
but also for the description of generating sets of the ideal of the model, as will be shown in \Cref{sec:idealgens}.

Since the simple $k$-trek between $i_1,\dots,i_k$ on a polytree, if it exists, is unique, then the notion of top is well-defined just by setting the end vertices.

\begin{definition} Let $G$ be a polytree and let $i_1,\dots,i_k$ be vertices such that a $k$-trek exists. We define $\tp(i_1,\dots,i_k)$ as the top node of its unique simple $k$-trek.
\end{definition}

\begin{definition}\label{def:matrices}
Let $G$ be a polytree. Let $i,j\in V$ be two vertices such that a 2-trek between $i$ and $j$ exists. We define the \emph{trek-matrix} between $i$ and $j$ as
    $$A_{i,j} := \begin{pmatrix} s_{ik_1} & \cdots & s_{ik_r} & t_{i \ell_1 m_1} & \cdots & t_{i \ell_q m_q} \\
    s_{jk_1} & \cdots & s_{jk_r} & t_{j \ell_1 m_1} & \cdots & t_{j \ell_q m_q}
    \end{pmatrix},$$
    where 
    \begin{itemize}
    \item $k_1,\ldots, k_r$ are vertices such that 
    $\tp(i,k_a)=\tp(j,k_a)$ for $a=1,\dots,r$, and
    \item $(l_1,m_1)$,$\dots$,$(l_q,m_q)$ are such that $\tp(i,l_b,m_b)=\tp(j,l_b,m_b)$ for $b=1,\dots,q$.
    \end{itemize}
\end{definition}

\begin{example}\label{ex:tetrapus1} Consider the graph $G$ in Figure~\ref{fig:tetrapus}. 
The corresponding trek-matrix $A_{1,2}$ from \Cref{def:matrices} is
\begin{equation*} 
\begin{blockarray}{cccccccccccccccc}
& 1	& 3 & 4 & 5 & 11 & 12 & 13 & 14 & 15 & 23 & \cdots &35 & 44 & 45 & 55 \\[2pt]
\begin{block}{c(ccccccccccccccc)}
1 & s_{11} & s_{13} & s_{14} & s_{15} & t_{111} & t_{112} & t_{113} &  t_{114} & t_{115} & t_{123} &   \cdots &  t_{135} & t_{144} & t_{145} & t_{155} \\
2 & s_{12} & s_{23} & s_{24} & s_{25} & t_{112} & t_{122} & t_{123} & t_{124} & t_{125} & t_{223} &  \cdots & t_{235} & t_{244} & t_{245} & t_{255} \\
\end{block}
\end{blockarray},
\end{equation*}
where the only missing column indices are $2$ and $22$ because $\tp(1,2)$ differs from $\tp(2,2)$ and $\tp(1,2,2)$ differs from $\tp(2,2,2)$. 
\end{example}

\begin{lemma}\label{lem:subgens}
Let $G=(V,E)$ be a polytree, and $i,j \in V$. Then
\begin{enumerate}
\item[(a)] if there is no 2-trek between $i$ and $j$, then $s_{ij} \in \mathcal I^{\leq 3}(G)$, and for all $k\in V$ such that there is no 3-trek between $i,j$ and $k$, we have that $ t_{ijk}\in\mathcal I^{\leq 3}(G)$.
\item[(b)] if there is an edge between $i$ and $j$, the $2$-minors of the trek-matrix $A_{i,j}$ lie in $\mathcal I^{\leq 3}(G)$.
\end{enumerate}
\end{lemma}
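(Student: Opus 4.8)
The plan is to work entirely through the simple trek parametrization, using the identification $\mathcal I^{\leq 3}(G)=\ker\phi_G$ from \Cref{th:param}. Because $G$ is a polytree, each set $\mathcal T(i,j)$ and $\mathcal T(i,j,k)$ has at most one element, so $\phi_G$ sends every $s_{ij}$ and $t_{ijk}$ either to a single (scaled) monomial in the $a$, $b$ and $\lambda$ variables or to $0$ when the relevant simple trek does not exist. Part (a) is then immediate: if there is no 2-trek between $i$ and $j$, then a fortiori there is no simple 2-trek, so $\mathcal T(i,j)=\emptyset$ and $\phi_G(s_{ij})$ is the empty sum $0$; likewise $\phi_G(t_{ijk})=0$ whenever $\mathcal T(i,j,k)=\emptyset$. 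Hence $s_{ij},t_{ijk}\in\ker\phi_G=\mathcal I^{\leq 3}(G)$.

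For part (b), assume without loss of generality that the edge is $i\to j$ (the case $j\to i$ is symmetric). The key claim I would establish is that under $\phi_G$ the second row of $A_{i,j}$ is exactly $\lambda_{ij}$ times the first row; that is,
\[
\phi_G(s_{jk_a})=\lambda_{ij}\,\phi_G(s_{ik_a})\quad\text{and}\quad \phi_G(t_{j\ell_b m_b})=\lambda_{ij}\,\phi_G(t_{i\ell_b m_b})
\]
for every column. Granting this, the image $\phi_G(A_{i,j})$ has rank at most one, so each $2$-minor $s_{ik_a}s_{jk_b}-s_{jk_a}s_{ik_b}$, together with its mixed and third-moment analogues, maps to $\phi_G(s_{ik_a})\lambda_{ij}\phi_G(s_{ik_b})-\lambda_{ij}\phi_G(s_{ik_a})\phi_G(s_{ik_b})=0$, placing all $2$-minors in $\ker\phi_G=\mathcal I^{\leq 3}(G)$.

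To prove the claim I would exploit the tree structure. Deleting the edge $i\to j$ splits the underlying tree into two components $C_i\ni i$ and $C_j\ni j$, joined only by $i\to j$. Fix a second-moment column $k_a$ and set $v:=\tp(i,k_a)=\tp(j,k_a)$. The unique directed path $v\to i$ cannot pass through $j$, since a subpath $j\to\cdots\to i$ together with $i\to j$ would create a directed cycle; hence $v\in C_i$. Consequently the directed path $v\to j$ must traverse the only cross-edge $i\to j$, and by uniqueness of directed paths in a polytree it factors as $v\to\cdots\to i\to j$, extending the path $v\to i$ by the single edge $i\to j$. Writing $\lambda(P)$ for the product of edge labels along a directed path $P$, and using that the path $v\to k_a$ is common to both simple 2-treks, we obtain $\phi_G(s_{jk_a})=a_v\,\lambda(v\to j)\,\lambda(v\to k_a)=\lambda_{ij}\,a_v\,\lambda(v\to i)\,\lambda(v\to k_a)=\lambda_{ij}\,\phi_G(s_{ik_a})$. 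The identical argument, applied to a third-moment column with common top $w:=\tp(i,\ell_b,m_b)=\tp(j,\ell_b,m_b)$ and shared tails $w\to\ell_b$, $w\to m_b$, yields $\phi_G(t_{j\ell_b m_b})=\lambda_{ij}\,\phi_G(t_{i\ell_b m_b})$.

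I expect the combinatorial claim that the common top lies in $C_i$ and that the $v\to j$ path factors through $i$ to be the main point requiring care: it is precisely here that acyclicity (ruling out the cycle) and the uniqueness of directed paths in a polytree (forcing both the factorization and the shared tails) are used. Everything else reduces to matching monomials, which the simple trek parametrization makes routine.
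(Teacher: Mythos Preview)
Your proof is correct and follows essentially the same approach as the paper: both use the simple trek parametrization and show that $\phi_G(A_{i,j})$ has rank at most one because the $j$-row is $\lambda_{ij}$ times the $i$-row. The only difference is that you supply a careful combinatorial justification (via the two components $C_i$, $C_j$ and the acyclicity argument) for why the path $v\to j$ factors as $v\to\cdots\to i\to j$, whereas the paper simply asserts the relation $\phi_G(s_{jk_a})=\lambda_{ij}\,\phi_G(s_{ik_a})$ from the equality of tops without spelling this out.
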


\begin{proof} 
(a) For all $i,j\in V$ such that there is no trek between them, by the trek rule, $\phi_G(s_{ij})=0$. Similarly, by the 3-trek-rule, if there is no 3-trek between $i,j,k$, then $\phi_G(t_{ijk})=0$. 

\smallskip
(b) Since $\tp(i,k_a)=\tp(j,k_a)$ and $\tp(i,l_b,m_b)=\tp(j,k_b,m_b)$, then $\phi_G(s_{ik_a})=\lambda_{ij}\phi_G(s_{jk_a})$ and $\phi_G(t_{il_bm_b})=\lambda_{ij}\phi_G(t_{jl_bm_b})$. Thus, the second row of $A_{i,j}$ is the first row of $A_{i,j}$ multiplied by $\lambda_{ij}$ and all 2-minors of $A_{i,j}$ belong to $\ker\phi_G=\mathcal I^{\leq 3}(G)$.
\end{proof}

\begin{theorem}\label{th:subset} Let $G$ be a polytree and $J$ be the ideal generated by the linear generators of $\mathcal{I}^{\leq 3}(G)$ and the $2$-minors of the matrices $A_{i,j}$ 
for $i \rightarrow j \in E.$
Then,
\[\mathcal{M}^{\leq 3}(G) =  \mathcal{V}(J) \cap (PD(V) \times \mathrm{Sym}_3(V)).\]
\end{theorem}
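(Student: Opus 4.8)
The plan is to prove the two inclusions of the asserted equality separately; the inclusion ``$\subseteq$'' is immediate, while all the work is in ``$\supseteq$''. For the easy direction, \Cref{lem:subgens} shows that every generator of $J$ --- the linear terms and the $2$-minors of the $A_{i,j}$ --- lies in $\mathcal I^{\leq 3}(G)$, so $J\subseteq\mathcal I^{\leq 3}(G)$ and hence $\mathcal M^{\leq 3}(G)\subseteq\mathcal V(\mathcal I^{\leq 3}(G))\subseteq\mathcal V(J)$; since $\mathcal M^{\leq 3}(G)\subseteq PD(V)\times\mathrm{Sym}_3(V)$ by definition, this yields one inclusion.

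For the reverse inclusion I would fix a pair $(S,T)\in\mathcal V(J)\cap(PD(V)\times\mathrm{Sym}_3(V))$ and reconstruct parameters that realize it. Because $S$ is positive definite we have $s_{ii}>0$, so for each edge $i\to j\in E$ I set $\lambda_{ij}:=s_{ij}/s_{ii}$; this is precisely the ratio recorded by the column of $A_{i,j}$ indexed by $i$, where $\tp(i,i)=\tp(j,i)=i$. I then put $a_i:=s_{ii}$ and $b_i:=t_{iii}$ and recover $\omega^{(2)}_i,\omega^{(3)}_i$ by inverting the triangular substitutions \eqref{eq:transfai} and \eqref{eq:transfbi}. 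By \Cref{th:param} it suffices to show that these parameters satisfy $\phi_G^\ast(a,b,\lambda)=(S,T)$ together with $\omega^{(2)}_i>0$.

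The covariance block is handled by Sullivant's theorem for trees \cite{sullivant2008} (or, uniformly, by the propagation below restricted to $s$-columns): a positive definite $S$ obeying the covariance $2$-minors --- the sub-minors of the $A_{i,j}$ using only $s$-columns, which lie in $J$ --- is realized in the model by exactly the coefficients $\lambda_{ij}=s_{ij}/s_{ii}$, and positivity $\omega^{(2)}_i>0$ is then automatic since $\Omega^{(2)}=(I-\Lambda)^T S(I-\Lambda)$ is a congruence of a positive definite matrix. For the third-order block there is no positivity to verify, so the task reduces to matching $T$ against the simple $3$-trek parametrization in the already-fixed $\lambda_{ij}$. The key tool is the family of \emph{mixed} $2$-minors of $A_{i,j}$: pairing the $s$-column indexed by $i$ with a $t$-column $(l,m)$ gives $s_{ii}t_{jlm}-s_{ij}t_{ilm}=0$, i.e. $t_{jlm}=\lambda_{ij}t_{ilm}$. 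Starting from $t_{vvv}=b_v$ at the top $v$ of the simple $3$-trek between $i,j,k$ and advancing the three indices one edge at a time along their (unique, since $G$ is a polytree) directed paths, repeated use of these relations propagates $t_{vvv}$ to $t_{ijk}$ and accumulates exactly the product of $\lambda$'s prescribed by $\phi_G$. Triples with no $3$-trek are dealt with directly by the linear generators from \Cref{lem:subgens}(a).

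I expect the propagation argument to be the main obstacle. At each edge of the descent I must certify that the $t$-column needed to apply the mixed-minor relation genuinely appears in the corresponding trek-matrix, i.e. that the top-equality condition $\tp(l,\cdot,\cdot)=\tp(k,\cdot,\cdot)$ of \Cref{def:matrices} holds at that stage. Since the three paths of a simple $3$-trek may share initial segments below $v$, keeping track of which partial triples retain a common top as one index is advanced is delicate, and the cleanest organization is an induction on the topological order in which one index is moved at a time after first reducing to the top. The difficulty here is combinatorial bookkeeping on the polytree rather than any algebraic subtlety.
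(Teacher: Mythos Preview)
Your proposal is correct and the forward-propagation strategy does go through, but it is organized differently from the paper's own argument. Both proofs begin identically by setting $\lambda_{ij}:=s_{ij}/s_{ii}$ for each edge and then exploit the mixed $2$-minors $s_{ii}t_{jlm}-s_{ij}t_{ilm}$ of $A_{i,j}$ to identify $\lambda_{ij}$ with third-moment ratios. From there the paper works \emph{backwards}: it forms $S'=(I-\Lambda)^TS(I-\Lambda)$ and $T'=T\bullet(I-\Lambda)\bullet(I-\Lambda)\bullet(I-\Lambda)$ and shows by an exhaustive case analysis on the trek structure (no trek; top distinct from all endpoints; top equals one endpoint; top equals two endpoints) that every off-diagonal entry $s'_{ij}$ and $t'_{ijk}$ vanishes, so $(S,T)$ lies in the image of the model map. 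Your route is the dual \emph{forward} computation: you verify $\phi_G^\ast(a,b,\lambda)=(S,T)$ entry by entry, starting from $t_{vvv}=b_v$ at the top $v=\tp(i,j,k)$ and descending along the three directed paths via the relation $t_{a'bc}=\lambda_{aa'}t_{abc}$.

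The bookkeeping obstacle you flag is real but easily tamed: if you move the three indices one at a time \emph{fully} in the order $k$, then $j$, then $i$, the required top-equality $\tp(a,b,c)=\tp(a',b,c)$ holds at every step. In the first two stages one of the indices is still $v$, so the top is trivially $v$; in the last stage any strictly lower common ancestor of $(a,j,k)$ would also be a common ancestor of $(i,j,k)$, contradicting $v=\tp(i,j,k)$. What each approach buys: the paper's case split is uniform and makes the role of each column of $A_{i,j}$ visible in the cancellation, while your propagation is shorter and more constructive once the ordering is fixed; it also makes transparent that only the single $s$-column indexed by $i$ together with the $t$-columns is needed, since you never invoke a pure $s$--$s$ minor beyond what Sullivant already supplies for the covariance block.
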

\begin{proof}
Let $S$ and $T$ be the second and third moments of a distribution such that $(S, T)\in\mathcal{M}^{\leq 3}(G)$. For any edge $i\to j \in E$, consider the matrix $A_{i,j}$. Note that it contains as a column the vector $\begin{bmatrix}s_{ii}, s_{ij}
\end{bmatrix}^T$. Let $\lambda_{ij} = \frac{s_{ij}}{s_{ii}}$, and let $\Lambda \in\mathbb R^E$ contain these as its entries.
Recall from \Cref{lem:subgens} that the ratio between every other column vector in $A_{i,j}$ will also be $\lambda_{ij}$. 
Consider
$$S' = (I-\Lambda)^T S (I-\Lambda), \quad T' = T\bullet (I-\Lambda)\bullet(I-\Lambda)\bullet (I-\Lambda).$$
We will now show that $S'$ and $T'$ are diagonal. For any $i\neq j$, we have that
$$s'_{ij} = \sum_{k,\ell} (I-\Lambda)_{ki}s_{k\ell}(I-\Lambda)_{\ell j}$$
$$= s_{ij} - \sum_{k\to i\in E} \lambda_{ki}s_{kj} - \sum_{\ell\to j\in E} \lambda_{lj} s_{i\ell} + \sum_{k\to i, \ell\to j\in E}\lambda_{ki}\lambda_{\ell j} s_{k\ell}.$$
We are going to consider 3 cases.

\underline{Case 1:} There is no 2-trek between $i$ and $j$. In this case, $s_{ij}=0$. Furthermore, if a 2-trek between $k$ and $j$ exists, then we can complete it to a 2-trek between $i$ and $j$. Thus, $s_{kj}=0$, and the first sum is $0$. Similarly, the other two sums are also 0. Therefore, $s'_{ij}=0$.

\underline{Case 2:} There is a 2-trek between $i$ and $j$. Set $v=\tp(i,j)$.

\underline{Case 2.0:} $v\neq i,j$. The simple 2-trek between $i$ and $j$ is of the form $i \leftarrow i_0 \cdots \leftarrow v\to  \cdots \to j_0 \to j$. Then, in the sums $\sum_{k\to i\in E} \lambda_{ki}s_{kj}$, 
$\sum_{\ell\to j\in E} \lambda_{lj} s_{i\ell}$, and $\sum_{k\to i, \ell\to j\in E}\lambda_{ki}\lambda_{\ell j} s_{k\ell}$ the only nonzero terms appear when $k=i_0$ and $\ell = j_0$ (otherwise, there cannot be a 2-trek between $k$ and $l$). When $k=i_0$, since $\tp(i,j) = \tp(k,j)$, $(s_{ij}, s_{kj})^T$ is a column in the matrix $A_{k,i}$, therefore  $\lambda_{ki} = \frac{s_{ij}}{s_{kj}}$. Similarly, when $\ell=j_0$, $(s_{jk}, s_{\ell k})^T$ is a column in the matrix $A_{\ell, j}$ and hence $\lambda_{\ell j} = \frac{s_{kj}}{s_{k\ell}}$. Therefore,
$$s'_{ij} = s_{ij} - \frac{s_{ij}}{s_{kj}} s_{kj} - \frac{s_{ij}}{s_{i\ell}}s_{i\ell} + \frac{s_{ij}}{s_{kj}}\frac{s_{kj}}{s_{k\ell}}s_{k\ell} = 0.$$

Note that either $s_{kj}\neq 0$ (resp. $s_{il}\neq 0$) or both $s_{kj}=s_{ij}=0$ (resp. $s_{il}=s_{ij}=0$), hence $s'_{ij}$ is either way well-defined. This situation is often repeated along the rest of the proof.

\underline{Case 2.1:} $v=i\neq j$. The simple 2-trek between $i$ and $j$ is of the form: $i\to\cdots\to j_0\to j$ or $i\leftarrow i_0 \cdots\leftarrow j$. WLOG, consider the former case. Then, the sums $\sum_{\ell\to j\in E} \lambda_{lj} s_{i\ell}$ and $\sum_{k\to i, \ell\to j\in E}\lambda_{ki}\lambda_{\ell j} s_{k\ell}$ are nonzero only when $\ell = j_0$ (otherwise, $s_{i\ell}$ and $s_{k\ell}$ are 0 because $j$ is a collider on the shortest path between $i$ and $\ell$ and the one between $j$ and $\ell$). 
Thus, we have that
$$s'_{ij}= s_{ij} - \sum_{k\in \text{pa}(i)} \lambda_{ki}s_{kj} -  \lambda_{j_0j} s_{ij_0} + \sum_{k\in\text{pa}(i)}\lambda_{ki}\lambda_{j_0 j} s_{kj_0}.$$
Note that $(s_{ji}, s_{j_0i})^T$ is a column in the matrix $A_{j_0,j}$, since top$(i,j_0) = $ top$(i,j) = i$. Thus,
$$s'_{ij}= s_{ij} - \frac{s_{ij}}{s_{ij_0}}s_{ij_0} + \sum_{k\in\text{pa}(i)}\lambda_{ki}(\lambda_{j_0j}s_{kj_0} - s_{kj}).$$

Furthermore, since $(s_{kj}, s_{kj_0})^T$ is a column in $A_{j_0,j}$, then
$$s'_{ij}=\sum_{k\in\text{pa}(i)}\lambda_{ki} (\frac{s_{kj}}{s_{kj_0}}s_{kj_0} - s_{kj}) = 0.$$
Therefore, $S'$ is diagonal.
\smallskip

We now proceed to showing that $T'$ is diagonal. We have that
$$t'_{ijk} = \sum_{a,b,c} (I-\Lambda)_{ai}(I-\Lambda)_{bj}(I-\Lambda)_{ck}t_{abc}$$
$$= t_{ijk} - \sum_{a\to i\in E}\lambda_{ai}t_{ajk} - \sum_{b\to j\in E}\lambda_{bj}t_{ibk} - \sum_{c\to k\in E} \lambda_{ck}t_{ijc}+ \sum_{a\to i, b\to j\in E}\lambda_{ai}\lambda_{bj}t_{abk}$$
$$+ \sum_{a\to i, c\to k\in E}\lambda_{ai}\lambda_{ck}t_{ajc} + \sum_{b\to j, c\to k\in E}\lambda_{bj}\lambda_{ck}t_{ibc} - \sum_{a\to i, b\to j, c\to k\in E}\lambda_{ai}\lambda_{bj}\lambda_{ck}t_{abc}$$

\underline{Case 1:} There is no 3-trek between $i,j,k$, i.e., $t_{ijk}=0$. Then, all of the 7 sums in the above expression will also be 0 by an analogous reasoning to Case 1 for $S'$.

\underline{Case 2:} There is a 3-trek between $i,j,k$. Set $v=\tp(i,j,k)$.

\underline{Case 2.0:} $v\neq i,j,k$. Therefore, the simple 3-trek between $i,j,k$ consists of 3 nontrivial directed paths $v\to\cdots\to i_0\to i, v\to\cdots \to j_0\to j$, and $v\to \cdots\to k_0\to k$. Because $G$ is a polytree, the seven sums above can be nonzero if and only if $a=i_0,b=j_0,c=k_0$ (otherwise, there cannot be a 3-trek between $a,b$, and $c$). Thus,
$$t'_{ijk} = t_{ijk} - \lambda_{i_0i}t_{i_0jk} - \lambda_{j_0j}t_{ij_0k} -  \lambda_{k_0k}t_{ijk_0}+ \lambda_{i_0i}\lambda_{j_0j}t_{i_0j_0k}$$
$$+ \lambda_{i_0i}\lambda_{k_0k}t_{i_0jk_0} + \lambda_{j_0j}\lambda_{k_0k}t_{ij_0k_0} - \lambda_{i_0i}\lambda_{j_0j}\lambda_{k_0k}t_{i_0j_0k_0}.$$

Since $\tp(i,j,k)=\tp(i_0,j,k)$, $(t_{ijk}, t_{i_0jk})^T$ is a column in $A_{i_0,i}$, and, therefore, $\lambda_{i_0i} = \frac{t_{ijk}} {t_{i_0jk}}$ (and similarly for the other $\lambda$'s in the expression above). Thus,
$$t'_{ijk}= t_{ijk} - \frac{t_{ijk}}{t_{i_0jk}}t_{i_0jk}- \frac{t_{ijk}}{t_{ij_0k}}t_{ij_0k}- \frac{t_{ijk}}{t_{ijk_0}}t_{ijk_0}+ \frac{t_{ij_0k}}{t_{i_0j_0k}}\frac{t_{ijk}}{t_{ij_0k}}t_{i_0j_0k}$$
$$+ \frac{t_{ijk_0}}{t_{i_0jk_0}}\frac{t_{ijk}}{t_{ijk_0}}t_{i_0jk_0} + \frac{t_{ijk_0}}{t_{ij_0k_0}}\frac{t_{ijk}}{t_{ijk_0}}t_{ij_0k_0} - \frac{t_{ijk}}{t_{i_0jk}}\frac{t_{i_0jk}}{t_{i_0j_0k}}\frac{t_{i_0j_0k}}{t_{i_0j_0k_0}}t_{i_0j_0k_0}\,=\,0.$$
\smallskip

\underline{Case 2.1:} $v=i\neq j,k$. In this case, the simple 3-trek between $i,j,k$ consists of the trivial path $i$ and paths $i\to\cdots\to j_0\to j$, and $i\to\cdots\to k_0\to k$. Therefore, we have that
$$t'_{ijk} = t_{ijk} - \sum_{a\in\text{pa}(i)}\lambda_{ai}t_{ajk} - \lambda_{j_0j}t_{ij_0k} -  \lambda_{k_0k}t_{ijk_0}+ \sum_{a\in\text{pa}(i)}\lambda_{ai}\lambda_{j_0j}t_{aj_0k}$$
$$+ \sum_{a\in\text{pa}(i)}\lambda_{ai}\lambda_{k_0k}t_{ajk_0} + \lambda_{j_0j}\lambda_{k_0k}t_{ij_0k_0} - \sum_{a\in\text{pa}(i)}\lambda_{ai}\lambda_{j_0j}\lambda_{k_0k}t_{aj_0k_0}$$
$$= t_{ijk} - \sum_{a\in\text{pa}(i)}\lambda_{ai}t_{ajk} - \frac{t_{ijk}}{t_{ij_0k}}t_{ij_0k} -  \frac{t_{ijk}}{t_{ijk_0}}t_{ijk_0}+ \sum_{a\in\text{pa}(i)}\lambda_{ai}\frac{t_{ajk}}{t_{aj_0k}}t_{aj_0k}$$
$$+ \sum_{a\in\text{pa}(i)}\lambda_{ai}\frac{t_{ajk}}{t_{ajk_0}}t_{ajk_0} + \frac{t_{ijk}}{t_{ij_0k}}\frac{t_{ij_0k}}{t_{ij_0k_0}}t_{ij_0k_0} - \sum_{a\in\text{pa}(i)}\lambda_{ai}\frac{t_{ajk}}{t_{aj_0k}}\frac{t_{aj_0k}}{t_{aj_0k_0}}t_{aj_0k_0} = 0.$$
\smallskip

\underline{Case 2.2:} $v=i=j\neq k$. In this case, the simple 3-trek between $i,j,k$ consists of the two trivial paths $i$ and $j$, and $i\to\cdots\to k_0\to k$. Therefore, we have that 
$$t'_{iik} = t_{iik} - \sum_{a\in\text{pa}(i)}\lambda_{ai}t_{aik} - \sum_{a\in\text{pa}(i)}\lambda_{ai}t_{iak} -  \lambda_{k_0k}t_{iik_0}+ \sum_{a,b\in\text{pa}(i)}\lambda_{ai}\lambda_{bi}t_{aak}$$
$$+ \sum_{a\in\text{pa}(i)}\lambda_{ai}\lambda_{k_0k}t_{aik_0} + \sum_{a\in\text{pa}(i)}\lambda_{ai}\lambda_{k_0k}t_{iak_0} - \sum_{a,b\in\text{pa}(i)}\lambda_{ai}\lambda_{bi}\lambda_{k_0k}t_{aak_0}$$
$$= t_{iik} - \sum_{a\in\text{pa}(i)}\lambda_{ai}t_{aik} - \sum_{a\in\text{pa}(i)}\lambda_{ai}t_{iak} -  \frac{t_{iik}}{t_{iik_0}}t_{iik_0}+ \sum_{a,b\in\text{pa}(i)}\lambda_{ai}\lambda_{bi}t_{aak}$$
$$+ \sum_{a\in\text{pa}(i)}\lambda_{ai}\frac{t_{aik}}{t_{aik_0}}t_{aik_0} + \sum_{a\in\text{pa}(i)}\lambda_{ai}\frac{t_{iak}}{t_{iak_0}}t_{iak_0} - \sum_{a,b\in\text{pa}(i)}\lambda_{ai}\lambda_{bi}\frac{t_{aak}}{t_{aak_0}}t_{aak_0} = 0.$$

Therefore, in all cases, $t'_{ijk}=0$ unless $i=j=k$.

We have shown that
$$S' = (I-\Lambda)^T S (I-\Lambda), \quad T' = T\bullet (I-\Lambda)\bullet(I-\Lambda)\bullet (I-\Lambda)$$
are diagonal, and $S'$ is positive definite since $S$ is, which means that
$$S = (I-\Lambda)^{-T} S' (I-\Lambda)^{-1}, \quad T = T'\bullet (I-\Lambda)^{-1}\bullet(I-\Lambda)^{-1}\bullet (I-\Lambda)^{-1}$$
lie in our model.
\end{proof}

\begin{example}\label{ex:tetrapus2} Consider the graph $G$ in Figure~\ref{fig:tetrapus}. It follows from \Cref{th:subset} that the model $\mathcal{M}^{\leq 3}(G)$ is cut out by all 2-minors of the trek-matrices $A_{1,2}$ (see \Cref{ex:tetrapus1}), $A_{1,3}$, $A_{1,4}$ and $A_{1,5}$. In particular, the ideal $J$, as defined in \Cref{th:subset}, is minimally generated by 431 quadratic binomials.
\end{example}

\section{Ideal Generators for Polytrees}\label{sec:idealgens}

In the previous section we provided a set-theoretic description of the model $\mathcal{M}^{\leq 3}(G)$ in terms of the trek-matrices corresponding to directed edges of the polytree $G$ and linear equations corresponding to the case where treks do not exist. However, the quadratic binomials considered in \Cref{th:subset} are in general not sufficient to generate the vanishing ideal of the model $\mathcal{I}^{\leq 3}(G)$. We illustrate this with an example.

\begin{example} Consider again the polytree $G$ in Figure~\ref{fig:tetrapus}. Note that there are quadratic binomials, such as $s_{25}s_{34}-s_{24}s_{35}$ or $t_{255}t_{345}-t_{245}t_{355}$, that vanish on the model but do not arise as 2-minors of the trek-matrices listed in \Cref{ex:tetrapus2}. One can check that $\mathcal{I}^{\leq 3}(G)$ is minimally generated by 557 quadratic binomials. A \texttt{Macaulay2} computation shows that the remaining 125 generators of the vanishing ideal of the model arise as 2-minors of trek-matrices that do not correspond to directed edges $i\rightarrow j$, namely $A_{2,3}$, $A_{2,4}$, $A_{2,5}$, $A_{3,4}$, $A_{3,5}$ and $A_{4,5}$. 
\end{example}

The rest of this section is devoted to prove that $\mathcal{I}^{\leq 3}(G)$ is indeed generated by the linear generators from \Cref{th:subset} and the quadratic binomials occurring as 2-minors of all trek-matrices associated to a polytree.

\begin{proposition}
Let $G=(V,E)$ be a polytree, and $i,j \in V$. If there is a 2-trek between $i$ and $j$, the $2$-minors of the trek-matrix $A_{i,j}$ lie in $\mathcal I^{\leq 3}(G)$.
\end{proposition}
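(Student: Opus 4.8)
The plan is to imitate the proof of \Cref{lem:subgens}(b). There, $i$ and $j$ were joined by an edge and the two rows of $A_{i,j}$ differed by the single factor $\lambda_{ij}$; here the rows will instead differ by one fixed rational factor attached to the simple $2$-trek between $i$ and $j$, which still forces the image of $A_{i,j}$ under $\phi_G$ to have rank one. Because $G$ is a polytree, \Cref{th:param} gives a monomial parametrization in which every simple trek is unique. For a vertex $z$ that is an ancestor of $x$, write $\pi(z\to x)=\prod_{k\to l}\lambda_{kl}$ for the $\lambda$-weight of the unique directed $z$--$x$ path, so that $\phi_G(s_{pq})=a_{\tp(p,q)}\,\pi(\tp(p,q)\to p)\,\pi(\tp(p,q)\to q)$, and analogously $\phi_G(t_{pqr})=b_{\tp(p,q,r)}\,\pi(\tp(p,q,r)\to p)\,\pi(\tp(p,q,r)\to q)\,\pi(\tp(p,q,r)\to r)$.

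Set $v=\tp(i,j)$, so that the simple $2$-trek between $i$ and $j$ reads $i\leftarrow\cdots\leftarrow v\to\cdots\to j$. The geometric core is the claim that any vertex $z$ possessing directed paths to both $i$ and $j$ must have a directed path to $v$, with $\pi(z\to i)=\pi(z\to v)\,\pi(v\to i)$ and $\pi(z\to j)=\pi(z\to v)\,\pi(v\to j)$. I would prove this by taking the median $m$ of $\{i,j,z\}$ in the underlying tree: since the $z$--$i$ and $z$--$j$ paths are directed, the tree-segments $z$--$m$, $m$--$i$, $m$--$j$ are all oriented away from $m$ along the branches toward $i$ and $j$, so $m$ lies on the $i$--$j$ path and is a common ancestor of both $i$ and $j$. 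As $v$ is the unique such peak of the $i$--$j$ path, $m=v$, and the two factorizations follow immediately.

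I then apply this column by column. For the covariance column indexed by $k_a$, the defining condition $\tp(i,k_a)=\tp(j,k_a)=:u_a$ exhibits $u_a$ as a vertex with directed paths to $i$, to $j$ and to $k_a$, so the claim applies with $z=u_a$. Setting $C_a=a_{u_a}\,\pi(u_a\to v)\,\pi(u_a\to k_a)$ yields $\phi_G(s_{ik_a})=\pi(v\to i)\,C_a$ and $\phi_G(s_{jk_a})=\pi(v\to j)\,C_a$. Identically, for the third-moment column indexed by $(\ell_b,m_b)$ with common top $u_b=\tp(i,\ell_b,m_b)=\tp(j,\ell_b,m_b)$, there is a monomial $C_b=b_{u_b}\,\pi(u_b\to v)\,\pi(u_b\to\ell_b)\,\pi(u_b\to m_b)$ with $\phi_G(t_{i\ell_bm_b})=\pi(v\to i)\,C_b$ and $\phi_G(t_{j\ell_bm_b})=\pi(v\to j)\,C_b$. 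Hence under $\phi_G$ the top row of $A_{i,j}$ is $\pi(v\to i)$ times the vector $(C_a,\dots,C_b,\dots)$ and the bottom row is $\pi(v\to j)$ times the same vector. The rows are therefore proportional, every $2\times 2$ minor of $\phi_G(A_{i,j})$ vanishes, and so each $2$-minor of $A_{i,j}$ lies in $\ker\phi_G=\mathcal I^{\leq 3}(G)$.

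The step I expect to be delicate is the median argument of the second paragraph, namely confirming that the common factors $\pi(v\to i)$ and $\pi(v\to j)$ are genuinely identical for every column, which amounts to the fact that on a polytree each column top $u_a,u_b$ reaches $i$ and $j$ only through the single vertex $v=\tp(i,j)$. Acyclicity together with the tree structure is precisely what makes this routing unique; on a general DAG a common ancestor of $i$ and $j$ need not factor through $\tp(i,j)$, and the proportionality of the two rows would break down.
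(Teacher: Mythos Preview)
Your proof is correct and follows essentially the same approach as the paper: both arguments show that every column top $u$ of $A_{i,j}$ reaches $i$ and $j$ only through $v=\tp(i,j)$, so that under $\phi_G$ the two rows become proportional with ratio $\pi(v\to i):\pi(v\to j)$. The paper states this factorization in one sentence (``the 2-treks between $i$ and $j$ with tops $a$ and $b$ factor through the simple 2-trek'') and illustrates it with a figure, whereas you spell it out via the median of $\{i,j,z\}$ in the underlying tree and the explicit $\pi$-weights; the added detail is welcome but the underlying idea is identical.
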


\begin{proof} 
Consider a trek-matrix $A_{i,j}$. Fix any two columns of the matrix with indices $k$ (out of the $s$-columns) and $l,m$ (out of the $t$-columns). Set $a:=\tp(i,k)=\tp(j,k)$ and $b:=\tp(i,l,m)=\tp(j,l,m)$.

The 2-treks between $i$ and $j$ with tops $a$ and $b$, respectively, factor through the simple 2-trek between $i$ and $j$, which has $\tp(i,j)$ as top. 
Moreover, there is exactly one path between two nodes of a polytree, hence both the 2-treks (encoded in $s_{ik}$ and $s_{jk}$) and 3-treks (encoded in $t_{ilm}$ and $t_{jlm}$) go through $\tp(i,j)$ as shown in Figure \ref{fig:choke}.

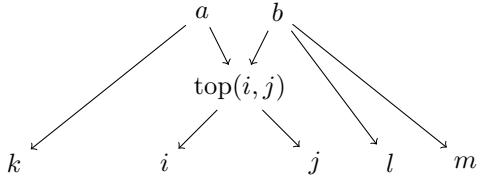
\begin{figure}[h]
\centering  
\begin{tikzpicture}
\node(a) at (0.5,2) []      {$a$};
\node(b) at (1.5,2) []      {$b$};
\node(t) at (1,1) []      {$\tp(i,j)$};
\node(k) at (-2,0) []      {$k$};
\node(i) at (0,0) []      {$i$};
\node(j) at (2,0) []      {$j$};
\node(l) at (3,0) []      {$l$};
\node(m) at (4,0) []      {$m$};
\draw(b) edge[->] (t)   (t) edge[->] (i) (t) edge[->] (j) (b) edge[->] (l) (b) edge[->] (m) (a) edge[->] (t) (a) edge[->] (k);
\end{tikzpicture}
    \caption{2-treks and 3-treks represented by $s_{ik},s_{jk},t_{ilm},t_{jlm}$ go through $\tp(i,j)$.}
    \label{fig:choke}
\end{figure}

From Figure \ref{fig:choke} and Proposition \ref{th:param}, it follows that $s_{ik}t_{jlm}-s_{jk}t_{ilm}$ belongs to $\ker\phi_G$.

Cases corresponding to two $s$ columns or two $t$ columns are dealt with analogously since only the central part of Figure \ref{fig:choke} matters. 
\end{proof}

\begin{remark}
Note that $\tp(i,j)$ is a \emph{choke point} between $I=\{i,j\}$ and $J=\{k,l,m\}$ in Figure \ref{fig:choke}, as defined in \cite{sullivant2008}. In terms of trek-separation \cite{MR2662356} this can be understood as the sets $I$ and $J$ being trek-separated by $(\{\tp(i,j)\},\{\emptyset\})$.

Even more, trek-matrices $A_{i,j}$ can be defined --- up to zero-columns that correspond to linear generators of the vanishing ideal --- in terms of trek-separation as follows:

    $$A_{i,j} := \begin{bmatrix} s_{ik_1} & \cdots & s_{ik_r} & t_{i \ell_1 m_1} & \cdots & t_{i \ell_q m_q} \\
    s_{jk_1} & \cdots & s_{jk_r} & t_{j \ell_1 m_1} & \cdots & t_{j \ell_q m_q}
    \end{bmatrix},$$
    where 
    \begin{itemize}
    \item all vertices $k_1,\ldots, k_r$  
    \item and all pairs of vertices $(l_1,m_1)$,$\dots$,$(l_q,m_q)$ 
    \end{itemize}
such that $(\{i,j\},\{k_1,\ldots, k_r,(l_1,m_1),\ldots,(l_q,m_q)\})$ are trek-separated by $(\{\tp(i,j)\},\{\emptyset\})$.
\end{remark}

For the covariance case, all quadratic binomials arise from 2-minors of $A_{i,j}$ as in \cite[Tetrad representation]{sullivant2008}. However, this is no longer the case for third moments or combinations of covariance and moments, as the next example shows.

\begin{example}\label{ex:quadratic_binomials}
For the graph in Figure \ref{fig:tetrapus}, $f=s_{23}t_{145}-s_{45}t_{123}\in \mathcal{I}^{\leq 3}(G)$, and because of the lack of repeated indices in $s$, this cannot come from a 2-minor of $A_{i,j}$.
However, this binomial is not necessary in order to generate the vanishing ideal of the model. Indeed,

$$f=(s_{23}t_{145}-s_{34}t_{125})+(s_{34}t_{125}-s_{45}t_{123})$$
is the sum of 2-minors of the trek-matrices $A_{2,4}$ and $A_{3,5}$, respectively.
\end{example}

In fact, all quadratic binomials in $\mathcal{I}^{\leq 3}(G)$ can be expressed as the sum of at most two 2-minors of trek matrices: 

\begin{proposition}
Let $G$ be a polytree. Then all quadratic binomials in $\mathcal{I}^{\leq 3}(G)$ are linear combinations of 2-minors of trek-matrices $A_{i,j}$ for $i,j \in V$.
\end{proposition}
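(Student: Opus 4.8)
The plan is to classify every quadratic binomial in $\mathcal{I}^{\leq 3}(G)$ by the structure of its monomials and to show that each one is a $\mathbb{Z}$-linear combination of trek-matrix $2$-minors, mimicking the resolution of the specific binomial $f$ in \Cref{ex:quadratic_binomials}. Since $G$ is a polytree, \Cref{cor:toricTrees} tells us $\mathcal{I}^{\leq 3}(G)$ is toric, so every binomial generator has the form $p - q$ where $p$ and $q$ are monomials of the same multidegree under $\phi_G$; as we only consider quadratic binomials, both $p$ and $q$ are products of exactly two of the variables $s_{ij}, t_{ijk}$. The key structural fact I would extract from \Cref{th:param} and the choke-point picture in \Cref{fig:choke} is that each variable maps under $\phi_G$ to a single monomial $a_{\tp}$ (resp. $b_{\tp}$) times a product of edge-coefficients $\lambda_{kl}$ along the unique paths of the simple trek; so $\phi_G(p)=\phi_G(q)$ forces the two variables in $p$ and the two in $q$ to have matching ``top-weights'' and matching total edge-usage, and this is exactly the combinatorial condition under which a rewriting by trek-matrix minors exists.

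First I would set up the bookkeeping: write $p=x_1 x_2$ and $q=y_1 y_2$ where each $x_i,y_j \in \{s_{\bullet\bullet}, t_{\bullet\bullet\bullet}\}$, and record for each variable its top node and its edge-multiset $\mu(\cdot)$ (the multiset of edges traversed by its simple trek, counted with multiplicity, as in the products $\prod \lambda_{kl}$ appearing in $\phi_G$). The equality $\phi_G(p)=\phi_G(q)$ says the $a/b$-weight multisets agree and $\mu(x_1)\uplus\mu(x_2)=\mu(y_1)\uplus\mu(y_2)$. The idea is then to pick an endpoint index shared appropriately between a variable of $p$ and a variable of $q$ so that the corresponding two variables sit as entries in a common row of some trek-matrix $A_{i,j}$, letting me subtract off one genuine $2$-minor and reduce to a strictly simpler binomial; as in the example, at most one such reduction step should be needed, leaving a residual binomial that is itself a $2$-minor. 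Concretely, I would choose $i,j$ to be the two endpoint indices that need to be ``swapped'' between $p$ and $q$, verify using uniqueness of paths in the polytree that the relevant entries $s_{i\bullet},s_{j\bullet}$ or $t_{i\bullet\bullet},t_{j\bullet\bullet}$ really do have matching tops (so that they are bona fide columns of $A_{i,j}$ per \Cref{def:matrices}), and then write $p-q = (p - r) + (r - q)$ where the intermediate monomial $r$ is chosen so that each of $p-r$ and $r-q$ is, up to sign, a $2$-minor of a trek-matrix.

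The main obstacle I anticipate is the case analysis needed to guarantee that a valid intermediate monomial $r$ always exists and that the swapped entries genuinely lie in the required trek-matrices, i.e. that the top-agreement conditions $\tp(i,k_a)=\tp(j,k_a)$ from \Cref{def:matrices} hold for the columns I want to use. This is where the polytree hypothesis is essential: because any two vertices are joined by a unique path, the top of a trek is determined by endpoints alone, and two variables sharing an endpoint and having equal edge-multisets on the complementary indices must have equal tops, which is precisely the column-membership condition. I would organize the argument by how the four endpoint-index-multisets of $x_1,x_2$ and $y_1,y_2$ differ — the trivial case where $p$ and $q$ already share a common variable collapses to a single minor, and the generic case where exactly one index pair is transposed yields the two-minor decomposition — and I expect a small number of degenerate subcases (e.g. when a top coincides with an endpoint, mirroring Cases~2.1 and~2.2 in the proof of \Cref{th:subset}) to require separate but routine verification.
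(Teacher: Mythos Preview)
Your overall strategy matches the paper's: classify quadratic binomials by whether the variables are both $s$, both $t$, or mixed, and in each case either exhibit the binomial directly as a $2$-minor of some $A_{i,j}$ or split it as a sum of two such minors via an intermediate monomial $r$. The paper carries this out exactly along these lines.

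The gap in your plan is the step where you assert that the top-agreement condition for column membership in $A_{i,j}$ will follow from ``two variables sharing an endpoint and having equal edge-multisets on the complementary indices must have equal tops.'' This is precisely the point at which the naive swap can fail. In the paper's Case~II (the $t$--$t$ case with $f = t_{ijk}t_{lmn} - t_{ilm}t_{jkn}$), the obvious arrangement as a minor of $A_{i,n}$ requires $\tp(i,j,k)=\tp(j,k,n)$ and $\tp(i,l,m)=\tp(l,m,n)$; when this fails, the alternative pairing gives tops $b_1=\tp(i,j,k)=\tp(i,l,m)$ and $b_2=\tp(j,k,n)=\tp(l,m,n)$ arranged \emph{columnwise}, which is not the shape of a trek-matrix minor. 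The resolution is not routine bookkeeping: you must observe that $b_1$ and $b_2$ both sit above the four vertices $j,k,l,m$, so on a polytree one is an ancestor of the other, and this comparability is what lets you choose a \emph{different} swap (the paper uses $A_{k,l}$ and $A_{j,m}$) to build a valid intermediate $r$. Your sketch does not isolate this ancestor/factoring argument, and without it there is no guarantee that any choice of swapped indices yields columns satisfying the $\tp$-equality in \Cref{def:matrices}. Once you supply that observation, the rest of your outline goes through as in the paper.
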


\begin{proof}
Let $f=m_1+m_2\in I_G$ be a quadratic binomial. By \Cref{th:param}, the tops of the underlying 2- or 3-treks of $m_1$ must be pairwise equal to the tops associated to $m_2$. Moreover, the indices of the variables in $m_2$ must be a permutation of the indices of $m_1$.

\smallskip
Case I: $f$ is a quadratic binomial on variables $s$. Without loss of generality we can assume $f=s_{ij}s_{kl}-s_{il}s_{jk}$. If $\tp(i,j)=\tp(i,l)$ and $\tp(k,l)=\tp(k,j)$, then 

\begin{equation}\label{eq:ss}
\begin{array}{c|cc}
  & i       & k\\
\hline
j & s_{ij} & s_{kj}\\
l & s_{il} & s_{kl}
\end{array}
\end{equation}

\noindent
is a 2-minor of the matrix $A_{j,l}$ in Definition \ref{def:matrices}. 
Otherwise, since the tops of the underlying 2-treks must be pairwise equal, we have $\tp(i,j)=\tp(k,j)$ and $\tp(k,l)=\tp(i,l)$. Then $f$ is the determinant of a 2-minor of matrix $A_{i,k}$ (it is enough to interchange the elements in the skew-diagonal of (\ref{eq:ss}).

\smallskip
Case II: $f$ is a quadratic binomial on variables $t$. Then $f=t_{ijk}t_{lmn}-t_{\sigma(i)\sigma(j)\sigma(k)}t_{\sigma(l)\sigma(m)\sigma(n)}$, where the $\{i,j,k\}\cup\{\sigma(i),\sigma(j),\sigma(k)\}$ has either one or two elements.

If the intersection consists of a single element, then we can assume 
$f=t_{ijk}t_{lmn}-t_{ilm}t_{jkn}$. If $\tp(i,j,k)=\tp(j,k,n)$ and $\tp(i,l,m)=\tp(l,m,n)$, then

\begin{equation}\label{eq:tt1}
\begin{array}{c|cc}
  & j,k      & l,m\\
\hline
i & t_{ijk} & t_{ilm}\\
n & t_{jkn} & t_{lmn}
\end{array}
\end{equation}

\noindent is a 2-minor of $A_{i,n}$. Otherwise, we need to exchange the elements in the skew-diagonal in order to have equal tops $b_1:=\tp(i,j,k)=\tp(i,l,m)$ and $b_2:=\tp(j,k,n)=\tp(l,m,n)$ arranged in columns. The resulting matrix 

\begin{equation}\label{eq:tt2}
\begin{array}{c|cc}
  & i      & n\\
\hline
j,k & t_{ijk} & t_{jkn}\\
l,m & t_{ilm} & t_{lmn}
\end{array}
\end{equation}

\noindent has not the desired shape. Consider the two 4-treks between  $j,k,l$ and $m$, one with top $b_1$ and the other with top $b_2$. Because $G$ is a polytree, one must factor through the other. Assume $b_1\leq b_2$, then the determinant of (\ref{eq:tt2}) can be written as the sum of the determinants of the corresponding 2-minors in $A_{k,l}$ and $A_{j,m}$.

If the intersection consists of two elements, we can proceed analogously.

\smallskip
Case III: $f$ is a quadratic binomial in both variables $s$ and $t$. Then $f$ is of the form $s_{ij}t_{klm}-s_{\sigma(i)\sigma(j)}t_{\sigma(k)\sigma(l)\sigma(m)}$ where $\{i,j\}\cap\{\sigma(i),\sigma(j)\}$ is either a single element or empty. As in case II, it can be proved that either $f$ arises as a 2-minor of a matrix in \ref{def:matrices} or it is a sum of two such matrices.
\end{proof}

\begin{remark} In (\ref{eq:tt2}) of the proof above, we see that $t_{ijk}t_{lmn}-t_{ilm}t_{jkn}$ 
is the sum of the determinants of 

\begin{equation}\label{eq:2minors}
\begin{array}{c|cc}
A_{k,l}  & (i,j)      & (m,n)\\
\hline
k & t_{ijk} & t_{kmn}\\
l & t_{ijl} & t_{lmn}
\end{array}\quad\quad\mbox{and}\quad\quad
\begin{array}{c|cc}
A_{j,m}  & (i,l)      & (k,n)\\
\hline
j & t_{ijl} & t_{jkn}\\
m & t_{ilm} & t_{kmn}
\end{array}.
\end{equation}

\noindent
The indices in $A_{k,l}$ and $A_{j,m}$ 
indicate which paths in the 3-trek we exchange between the two terms of the determinant, as represented in Figure \ref{fig:factor2}.

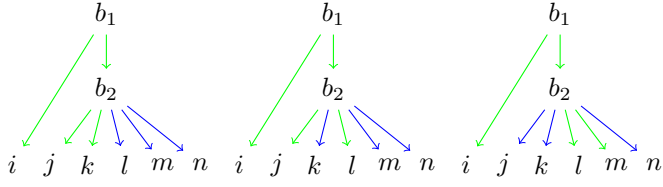
\begin{figure}[ht]
\centering  
\begin{tikzpicture}
\node(b1) at (0,2) []      {$b_1$};
\node(b2) at (0,1) []      {$b_2$};
\node(i) at (-1.25,0) []      {$i$};
\node(j) at (-0.75,0) []      {$j$};
\node(k) at (-0.25,0) []      {$k$};
\node(l) at (0.25,0) []      {$l$};
\node(m) at (0.75,0) []      {$m$};
\node(n) at (1.25,0) []      {$n$};
\draw[blue] (b2) edge[->] (l)   (b2) edge[->] (m) (b2) edge[->] (n);
\draw[green] (b1) edge[->] (i)   (b2) edge[->] (j) (b2) edge[->] (k) (b1) edge[->] (b2);
\end{tikzpicture}
\begin{tikzpicture}
\node(b1) at (0,2) []      {$b_1$};
\node(b2) at (0,1) []      {$b_2$};
\node(i) at (-1.25,0) []      {$i$};
\node(j) at (-0.75,0) []      {$j$};
\node(k) at (-0.25,0) []      {$k$};
\node(l) at (0.25,0) []      {$l$};
\node(m) at (0.75,0) []      {$m$};
\node(n) at (1.25,0) []      {$n$};
\draw[blue] (b2) edge[->] (k)   (b2) edge[->] (m) (b2) edge[->] (n);
\draw[green] (b1) edge[->] (i)   (b2) edge[->] (j) (b2) edge[->] (l) (b1) edge[->] (b2);
\end{tikzpicture}
\begin{tikzpicture}
\node(b1) at (0,2) []      {$b_1$};
\node(b2) at (0,1) []      {$b_2$};
\node(i) at (-1.25,0) []      {$i$};
\node(j) at (-0.75,0) []      {$j$};
\node(k) at (-0.25,0) []      {$k$};
\node(l) at (0.25,0) []      {$l$};
\node(m) at (0.75,0) []      {$m$};
\node(n) at (1.25,0) []      {$n$};
\draw[blue] (b2) edge[->] (k)   (b2) edge[->] (j) (b2) edge[->] (n);
\draw[green] (b1) edge[->] (i)   (b2) edge[->] (m) (b2) edge[->] (l) (b1) edge[->] (b2);
\end{tikzpicture}

    \caption{From left to right: pairs of 3-treks representing terms $t_{ijk}t_{lmn}$, $t_{ijl}t_{kmn}$, $t_{ilm}t_{jkn}$. First two pictures correspond to the determinant of the 2-minor of $A_{kl}$ displayed in (\ref{eq:2minors}) and last two correspond to $A_{jm}$.}
    \label{fig:factor2}
\end{figure}

\end{remark}

\begin{definition}\label{def:lex}
We consider the following lexicographic ordering on the variables:
\begin{itemize}
    \item $s_{ij}<t_{klm}$, or equivalently, 2-treks come before 3-treks;
    \item $s_{ij}< s_{kl}$ (analogously for 3-treks) iff either 
    \begin{itemize}
        \item $\tp(i,j)<\tp(k,l)$, i.e. treks with lower top values come first;
    \item or $\tp(i,j)=\tp(k,l)$ and the path from the top to $i$ goes through the node with the lowest value compared to the path from the top to $k$, or $i=k$ but the path from the top to $j$ goes through the node with the lowest value compared to the path from the top to $l$.
    \end{itemize}
\end{itemize}
\end{definition}

We can represent monomials in variables $s_{ij},t_{ijk}$  in a tableau with 3 columns. Each column encodes a path of the trek as a string of numbers denoting the nodes in the path. For 2-treks, we will leave the last column empty. Monomials can be represented by adding rows to the tableau.
Considering the previous lexicographic ordering, the tableau is uniquely determined. We call this a standard form tableau representation, see \cite{sullivant2008}.

\begin{example}\label{ex:tableau}
Consider the polytree $G$ in Figure \ref{fig:tetrapus} and take the cubic binomial $f=s_{24}s_{35}t_{344}-s_{34}s_{45}t_{234}$ in $\mathcal{I}^{\leq 3}(G)$. Using a standard form tableau representation this can be rewritten as
$$f=\left[\begin{array}{l|l|l}
  \underline{1}2  &  \underline{1}4 & \\
  \underline{1}3  &  \underline{1}5 & \\
  \underline{1}3  &  \underline{1}4 & \underline{1}4\\
\end{array}\right]
-\left[\begin{array}{l|l|l}
  \underline{1}3  &  \underline{1}4 & \\
  \underline{1}4  &  \underline{1}5 & \\
  \underline{1}2  &  \underline{1}3 & \underline{1}4\\
\end{array}\right].$$
\end{example}

In order to prove that all binomials in $\mathcal{I}^{\leq 3}(G)$ are generated by quadratic binomials we will extend Sullivant's strategy \cite{sullivant2008} of reducing disagreements via quadratic moves to both 2 and 3-treks using tableaux with 3 columns as in \Cref{ex:tableau}. 

\begin{example}\label{ex:reduce}
Let $f$ be as in \Cref{ex:tableau}. We will use the third row of the second term to reduce the disagreements between the two terms in the first row. The quadratic move
$$q=\left[\begin{array}{l|l|l}
  \underline{1}3  &  \underline{1}4 & \\
  \underline{1}2  &  \underline{1}3 & \underline{1}4\\
\end{array}\right]-\left[\begin{array}{l|l|l}
  \underline{1}2  &  \underline{1}4 & \\
  \underline{1}3  &  \underline{1}3 & \underline{1}4\\
\end{array}\right]=s_{34}t_{234}-s_{24}t_{334}\in \mathcal{I}^{\leq 3}(G)$$

yields a polynomial with fewer disagreements, namely 
$$f'=\left[\begin{array}{l|l|l}
  \underline{1}2  &  \underline{1}4 & \\
  \underline{1}3  &  \underline{1}5 & \\
  \underline{1}3  &  \underline{1}4 & \underline{1}4\\
\end{array}\right]
-\left[\begin{array}{l|l|l}
  \underline{1}2  &  \underline{1}4 & \\
  \underline{1}4  &  \underline{1}5 & \\
  \underline{1}3  &  \underline{1}3 & \underline{1}4\\
\end{array}\right].$$

The first row of $f'$  can be factored out and only remains a quadratic binomial already in $\mathcal{I}^{\leq 3}(G)$. In polynomial notation, we have $f'=f+s_{45} q=s_{24}(s_{35}t_{344}-s_{45}t_{334})\in \mathcal{I}^{\leq 3}(G)$, hence we can now write $f$ as a linear combination of quadratic binomials of the ideal.
\end{example}

Let $I_G$ be the ideal generated by matrices $A_{i,j}$ in Definition \ref{def:matrices} and linear generators $s_{ij}$ and $t_{ijk}$ when there is no 2-trek between $i,j$ or no 3-trek between $i,j,k$.

\begin{theorem}\label{th:generators}
Let $G$ be a polytree. Then all binomials in $\mathcal{I}^{\leq 3}(G)$ are generated by quadratic binomials, i.e., $$\mathcal{I}^{\leq 3}(G)=I_G.$$ 
\end{theorem}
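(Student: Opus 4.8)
The goal is to upgrade the set-theoretic description from \Cref{th:subset} to an ideal-theoretic statement, namely that $I_G$ (the ideal generated by all $2$-minors of all trek-matrices $A_{i,j}$, together with the linear generators for nonexistent treks) equals the full toric ideal $\mathcal{I}^{\leq 3}(G)$. Since $\mathcal{I}^{\leq 3}(G)$ is toric by \Cref{cor:toricTrees}, it is generated by binomials, so it suffices to show that every binomial $f \in \mathcal{I}^{\leq 3}(G)$ lies in $I_G$. The plan is to induct on the degree of $f$, with the base case being the quadratic binomials, which the preceding proposition already established are linear combinations of $2$-minors of trek-matrices, hence lie in $I_G$. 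For the inductive step I would adapt Sullivant's ``reduction of disagreements via quadratic moves'' strategy \cite{sullivant2008} to the $3$-column tableau setting developed in \Cref{ex:tableau} and \Cref{ex:reduce}.

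The core of the argument is the reduction step. Write $f = m_1 - m_2$ where $m_1$ and $m_2$ are monomials of the same degree $d \geq 3$ in the $s$ and $t$ variables; by \Cref{th:param} the multiset of tops of the $2$- and $3$-treks underlying $m_1$ must match that of $m_2$, and the indices appearing in $m_2$ are a rearrangement of those in $m_1$. Represent both monomials in standard form tableaux with three columns (the last column empty for $s$-variables). If the tableaux agree row-by-row then $f = 0$, so otherwise there is a first row where they disagree. The key move is to find a quadratic binomial $q \in \mathcal{I}^{\leq 3}(G)$ --- arising, by the quadratic generation proposition, as a linear combination of $2$-minors of trek-matrices, hence as an element of $I_G$ --- such that $f - h\cdot q$ (for a suitable monomial $h$) has strictly fewer disagreements than $f$, while remaining a binomial in $\mathcal{I}^{\leq 3}(G)$ of the same degree. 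Iterating, I reduce $f$ modulo $I_G$ to a binomial whose two tableaux share a common row; that common factor can be pulled out, reducing the degree, and the inductive hypothesis finishes the argument.

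The main obstacle, and the place requiring genuine care, is verifying that a disagreement-reducing quadratic move \emph{always exists} and that the resulting binomial still lies in $\mathcal{I}^{\leq 3}(G)$ (equivalently, that it corresponds to a valid pair of trek configurations with matching tops). Concretely, given a disagreement in the first row between $m_1$ and $m_2$, one must locate an entry further down in $m_2$ (as in \Cref{ex:reduce}, where the third row supplies the correction) that can be swapped to repair the top row via a legal $2$- or $3$-trek exchange. Here the polytree hypothesis is essential: because there is a \emph{unique} path between any two vertices, the tops $\tp(\cdot)$ are well-defined and the treks factor uniquely through choke points, so any required rearrangement of paths corresponds to an honest minor of some $A_{i,j}$ (possibly, as in the quadratic proposition's Case~II, a sum of two such minors through a comparison $b_1 \le b_2$ of competing tops). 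I would organize the case analysis according to whether the disagreeing variables are both $s$-type, both $t$-type, or mixed, mirroring Cases I--III of the quadratic generation proof, and in each case exhibit the quadratic move explicitly in tableau form. The termination of the reduction is guaranteed by a monovariant: each move strictly decreases the number of disagreeing rows (or lexicographically simplifies the tableau under the ordering of \Cref{def:lex}) without increasing the degree, so the process halts and expresses $f$ as a combination of quadratic binomials in $I_G$.
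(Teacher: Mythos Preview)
Your plan matches the paper's strategy at the high level: represent monomials as three-column tableaux, locate the first disagreement, and repair it by a quadratic swap coming from a $2$-minor of some $A_{i,j}$, iterating until a common row factors out. The paper's proof (given in full in the appendix) follows exactly this template.

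However, your plan is vague precisely where the real difficulty lies, and the mechanism you gesture at for handling it is not the one that works. The obstruction is \emph{not} whether a swap ``corresponds to an honest minor of some $A_{i,j}$''; it is whether the swapped row still names a variable at all. Concretely: if you try to move the path $\underline{b}\beta\cdots j''$ from a lower row of $m_2$ into the first row to replace $\underline{b}\delta\cdots j'$, the move fails when the \emph{other} row acquires three paths all leaving $b$ through the same child, i.e.\ when $\zeta=\eta=\delta$. Then the resulting row is not a simple $3$-trek with top $b$ and does not correspond to any $t_{\bullet\bullet\bullet}$. Your reference to the ``sum of two minors through a comparison $b_1\le b_2$'' from the quadratic proposition does not resolve this; that trick handles a different phenomenon (treks with different tops), whereas here all tops are equal and the issue is validity of the tableau row.

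The paper's fix is a counting argument you do not mention: if the obvious swap is blocked because the offending row has two copies of the pattern $b\delta$, then since the first row of $m_1$ has strictly fewer copies of $b\delta$ than the first row of $m_2$, a pigeonhole count over all rows with top $b$ forces some \emph{third} row of $m_2$ to carry at most one copy of $b\delta$. One then performs two successive quadratic moves routed through that auxiliary row (and checks a further degenerate subcase where even these fail, which turns out to admit a direct single move). This three-row manoeuvre is the technical core. Relatedly, the paper's case analysis is organized not by variable type ($s$/$t$/mixed) as you suggest, but by \emph{where} the first disagreement sits: immediately after the top in the middle column, immediately after the top in the third column, or further down a path (where the swap may involve treks with a different, smaller top). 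The $s$-only case is delegated to Sullivant, the $t$-only case is the main work, and the mixed case is a variant where the assumption that first columns agree can fail.
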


\emph{Sketch of the proof:} For binomials in $\CC[s_{ij}]$, see \cite[Theorem 5.8]{sullivant2008}. In the remaining cases, we will reduce disagreements via quadratic moves on both 2 and 3-treks as in \Cref{ex:reduce}. 

For binomials in $\CC[t_{ijk}]$, we start by considering disagreements right after the top in the first row of the tableau representation of $f=m_1+m_2$. Thanks to the lexicographic ordering in \Cref{def:lex}, we can assume that disagreements are never in the first column. Moreover, the ordering is used to ensure the occurrence of the pattern of interest in the first row of $m_1$ in some inferior row of $m_2$. In the general case, the number of disagreements is reduced with a single quadratic move. 

The special case occurs when the quadratic move is not applicable because it does not correspond to variables in $\CC[t_{ijk}]$ and this is described by certain equalities among the entries in the tableau. If we can ensure the existence of a third row in $m_2$ with at most one occurrence of the pattern, we obtain two quadratic moves that reduce disagreements. There is a special case of this special case in which the two moves are not applicable. This is a very degenerate situation that turns out to be reducible in a single quadratic move.
On the other hand, if the existence is not ensured, it yields a certain equality and we deal with that particular case analogously.

Next we need to consider disagreements that appear not right after the top but later on. The strategy is the same but now the required quadratic moves might have different top than the 3-treks we are considering. If this occurs, we can think of them as partial moves.

Finally, we deal with binomials in $\CC[s_{ij},t_{ijk}]$ with both variables $s$ and $t$. Note that the fact that 2-treks always come before 3-treks (see \Cref{ex:reduce}) in the lexicographic ordering removes the assumption that disagreements are never in the first column but the rest of the proof follows analogously.

A complete proof of this theorem is presented in \Cref{app:proof}.

\medskip


\begin{example}
Recall the graph from Figure \ref{fig:tetrapus}. Let $J$ be the ideal in \Cref{th:subset} generated by all 2-minors of trek-matrices corresponding to edges of the graph, namely $A_{1,2}, A_{1,3}, A_{1,4},A_{1,5}$.
Computations in \texttt{Macaulay2} show that $\mathcal{I}^{\leq 3}(G)=(J:s_{11}^{\infty})$. In particular, it holds that $$\mathcal{M}^{\leq 3}(G)=\mathcal{V}(\mathcal{I}^{\leq 3}(G))\cap \left(\mathit{PD}(V)\times \mathrm{Sym}_3(V)\right)=\mathcal{V}(J)\cap \left(\mathit{PD}(V)\times \mathrm{Sym}_3(V)\right).$$
\end{example}

\section{Latent Variables} \label{sec:latent}

It often happens in statistical practice that some of the random variables cannot be observed.
Such variables are called latent or hidden.
These include psychological or sociological constructs such as 
anxiety or extroversion,
random variables coming from sensitive (e.g., medical) data that have been censored,
as well as data that have been lost, for instance an extinct biological species.
Latent variables imply the existence of correlations between observed variables, even if there are no causal relations among them.

In this section we show that for a class of graphs with hidden variables,
the observed variable ideal 
is generated by 2-minors of certain submatrices of the trek-matrices $A_{i,j}$ defined in Section \ref{sec:trek-matrices}.

\begin{definition}
Let $H \cup O$ be a partition of the nodes of the DAG $G$.
The hidden nodes $H$ are said to be \emph{upstream} from the observed nodes $O$ in G if there are no edges $o \rightarrow h$ in $G$ with $o \in O$ and $h \in H$. In this case, we call $H\cup O$ an {\em upstream partition} of the vertices.
\end{definition}

\begin{example}\label{ex:hidden} Recall the graph in Example \ref{ex:quadratic_binomials}. Consider the partition $G=H\cup O$ with $H=\{1\}$ and $O=\{2,3,4,5\}$. Let $\mathcal{I}_O^{\leq 3}(G)=\mathcal{I}^{\leq 3}(G)\cap\CC[s_{ij},t_{ijk}:i,j,k\in O]$ be the vanishing ideal of the model over the observed variables. 
Computations with \texttt{Macaulay2} show that
it is a binomial ideal irredundantly generated by 126 quadratic binomials.
In fact, this ideal is generated by minors of submatrices of the trek matrices $A_{i,j}$ for $i,j \in O$ after removing all columns containing variables with the first index, such as 
\begin{equation*} 
\begin{blockarray}{ccccccccccc}
& 4	& 5 & 23 & 24 & 25 & 34 & 35 & 44 & 45 & 55 \\[2pt]
\begin{block}{c(cccccccccc)}
2 & s_{24} & s_{25} & t_{223} & t_{224} & t_{225} & t_{234} & t_{235} &  t_{244} & t_{245} & t_{255}\\
3 & s_{34} & s_{35} & t_{233} & t_{234} & t_{235} & t_{334} & t_{335} & t_{344} & t_{345} & t_{355} \\
\end{block}
\end{blockarray}.
\end{equation*}
\end{example}

The behavior displayed in Example \ref{ex:hidden} is known to be true for vanishing ideals of partially observed Gaussian models arising from polytrees with an upstream partition\cite[Section 6]{sullivant2008}. 
In the rest of this section we focus on proving this result for ideals of third moments.

For an upstream partition $H\cup O$ we now define a multigrading on the ring $\CC[a,b,\lambda]$,
which induces a multigrading on $\CC[s,t]$ such that the corresponding moment ideal is homogeneous.
Let $\dg a_h = (1,1)$ for all $h \in H$ and $\dg a_o = (1,3)$ for all $o \in O$. Similarly let $\dg b_h = (1,0)$ for all $h \in H$ and $\dg b_o = (1,3)$ for all $o \in O$. Finally let $\dg \lambda_{ho} = (0,1)$ for all $h \in H$ and $o \in O$, and $\dg \lambda_{ij} = (0,0)$ otherwise.

\begin{lemma}\label{lem:grading}
For an upstream partition on a DAG $G$, 
the above grading induces a grading on $\CC[s,t]$ with
\[ \dg s_{ij} = (1, 1+\text{number of elements in the multiset } \{i,j\} \text{ in } O) \]
and
\[ \dg t_{ijk} = (1, \text{number of elements in the multiset } \{i,j,k\} \text{ in } O). \]
\end{lemma}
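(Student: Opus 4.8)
The plan is to verify the claimed degrees by pushing the given multigrading on $\CC[a,b,\lambda]$ through the simple trek rule parametrization $\phi_G$ from \Cref{th:param}. Since the grading on $\CC[a,b,\lambda]$ is defined explicitly, it suffices to check that every monomial appearing in the image $\phi_G(s_{ij})$ or $\phi_G(t_{ijk})$ has the claimed multidegree; because each such image is a single monomial when $G$ is a polytree (by \Cref{cor:toricTrees}), this is really a computation on one monomial, but I would carry it out for the general expression to confirm the grading is well-defined even when $G$ is not a polytree. The first step is to recall from $\phi_G$ that $s_{ij}\mapsto \sum_{\tau\in\mathcal T(i,j)} a_{\tp(\tau)}\prod_{k\to l\in\tau}\lambda_{kl}$ and similarly $t_{ijk}\mapsto\sum_{\tau\in\mathcal T(i,j,k)} b_{\tp(\tau)}\prod_{m\to l\in\tau}\lambda_{ml}$, so each monomial is one top-variable times a product of edge variables $\lambda_{ml}$.

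For the first graded component (the one counting total ``weight'' $1$ per top variable), observe that every monomial in $\phi_G(s_{ij})$ and $\phi_G(t_{ijk})$ contains exactly one factor $a_{\tp(\tau)}$ or $b_{\tp(\tau)}$, each of first-coordinate degree $1$, while every $\lambda$ has first-coordinate degree $0$. Hence $\dg_1 s_{ij}=\dg_1 t_{ijk}=1$, matching the claim. The content of the lemma therefore lies entirely in the second coordinate. Here the key observation to exploit is the upstream hypothesis: an edge $m\to l$ carries second-degree $1$ precisely when $m\in H$ and $l\in O$, i.e. exactly the edges that cross from the hidden part into the observed part; all other edges carry second-degree $0$. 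The plan is to count these crossing edges along the paths of a trek in terms of how many of the endpoints lie in $O$.

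The main computation is then the following bookkeeping argument. Fix a $2$-trek $\tau=(P_1,P_2)$ between $i$ and $j$ with top $v=\tp(\tau)$. Because $H$ is upstream, any directed path, once it enters $O$, can never return to $H$; so each path $P_r$ crosses the $H/O$ boundary at most once, and it crosses exactly once iff its source is in $H$ and its sink is in $O$. I would split into cases according to whether $v\in H$ or $v\in O$. If $v\in O$, then since the top is an ancestor of both sinks, both sinks $i,j$ lie in $O$ (so $\{i,j\}$ contributes $2$), no $\lambda$-edge crosses the boundary (all nodes on both paths are in $O$), and $\dg a_v=(1,3)$; this gives second-degree $3=1+2$, matching $1+\#(\{i,j\}\cap O)$. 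If $v\in H$, then $\dg a_v=(1,1)$ contributes $1$, and the number of crossing $\lambda$-edges equals the number of paths whose sink lies in $O$, which is exactly $\#(\{i,j\}$ in $O)$; so again the second degree is $1+\#(\{i,j\}$ in $O)$. The $t$-case is handled the same way, with three paths and $\dg b_v=(1,0)$ or $(1,3)$, where the shift by $1$ is absent, yielding $\#(\{i,j,k\}$ in $O)$ exactly. The main obstacle is organizing these cases cleanly and making sure the ``crossing edge count equals number of sinks in $O$'' claim is justified by the upstream property (no $o\to h$ edges), rather than tacitly assuming paths behave nicely; I would state and use the no-return fact as the crux. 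Once the per-monomial degree is shown to depend only on the multiset of endpoints and not on $\tau$, the grading descends consistently to $\CC[s,t]$, completing the proof.
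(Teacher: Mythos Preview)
Your proposal is correct and follows essentially the same approach as the paper: both proofs split into cases according to whether the top of the trek lies in $H$ or in $O$, use the upstream hypothesis to count how many edges of the trek cross from $H$ into $O$, and observe that this count equals the number of endpoint vertices lying in $O$. Your write-up is slightly more explicit about the ``once a path enters $O$ it cannot return to $H$'' observation, but the logic is the same.
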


\begin{proof}
For any monomial $m$ that appears in the image of $s_{ij}$ there are two possibilities. 
That monomial arises from a $2$-trek whose top is either in the  observed part of the graph or in the hidden one.
By having an upstream partition, 
the first case can only occur if both $i$ and $j$ are observed vertices
and this also implies that 
all the $\lambda_{kl}$ arising from edges in the trek have degree $(0,0)$,
since all edges are between vertices in the observed part.
Therefore the degree of $m$ is equal to the degree of $a_{\tp(i,j)}$ which is $(1,3)$.
In the second case, the variable $a_{\tp(i,j)}$ will contribute the multidegree $(1,1)$. 
Again by having an upstream partition,
there will be exactly as many edges from a hidden vertex to an observed one as
number of elements in the multiset  $\{i,j\}$ in $O$.
But this implies
\[ \dg s_{ij} = (1, 1+\text{number of elements in the multiset } \{i,j\} \text{ in } O). \]
The argument transfers almost verbatim to the degree of $t_{ijk}$.
\end{proof}

\begin{proposition}\label{prop:latent}
For any polytree $G$ with an upstream partition of its edges, 
the \emph{observed variable ideal} given by the intersection \[\mathcal{I}^{\leq 3}_O(G) =\mathcal{I}^{\leq 3}(G)\cap\CC[s_{ij},t_{ijk}:i,j,k\in O] \]  is generated by the minors of the submatrices of $A_{i,j}$ with $i,j$ both in $O$,
with columns indexed by $k$ and pairs $(l,m)$ where $k, l, m$ are all in $O$.
\end{proposition}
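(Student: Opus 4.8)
The plan is to extract from \Cref{lem:grading} a single $\mathbb{Z}_{\ge 0}$-grading that realizes the observed subring as its degree-zero part, and then combine it with the generating set from \Cref{th:generators}. Writing $\dg=(\dg_1,\dg_2)$ for the bigrading of \Cref{lem:grading} on $R:=\CC[s_{ij},t_{ijk}]$, set $\delta:=3\dg_1-\dg_2$. On variables this gives $\delta(s_{ij})=2-|\{i,j\}\cap O|$ and $\delta(t_{ijk})=3-|\{i,j,k\}\cap O|$, both nonnegative, and equal to zero exactly when all indices lie in $O$. Hence $R$ is $\mathbb{Z}_{\ge 0}$-graded by $\delta$, its degree-zero subring is exactly $R_0=R_O:=\CC[s_{ij},t_{ijk}:i,j,k\in O]$, and, since the bigrading makes $\mathcal{I}^{\leq 3}(G)$ homogeneous, the ideal is $\delta$-homogeneous. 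Therefore
\[
\mathcal{I}^{\leq 3}_O(G)=\mathcal{I}^{\leq 3}(G)\cap R_O=\big(\mathcal{I}^{\leq 3}(G)\big)_0,
\]
the degree-zero component of the full moment ideal. The inclusion $\supseteq$ of the proposition is then immediate: a $2$-minor of an observed submatrix of $A_{i,j}$ is a $2$-minor of $A_{i,j}$, hence lies in $\mathcal{I}^{\leq 3}(G)$ by \Cref{th:generators}, and it involves only observed variables.

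For the reverse inclusion I would invoke the elementary principle that, in a nonnegatively graded ring, the degree-zero part of an ideal is generated over $R_0$ by those generators of degree zero. Concretely, \Cref{th:generators} gives $\mathcal{I}^{\leq 3}(G)=I_G$, generated by the $2$-minors of all trek-matrices $A_{i,j}$ together with the linear generators $s_{ij}$ and $t_{ijk}$ corresponding to absent treks. Each generator is $\delta$-homogeneous --- the minors because their two monomials share a bidegree, the linear generators trivially --- and of nonnegative $\delta$-degree. Writing any $f\in(\mathcal{I}^{\leq 3}(G))_0$ as $f=\sum_\alpha c_\alpha g_\alpha$ and passing to $\delta$-degree zero forces each surviving term to satisfy $\delta(c_\alpha)=-\delta(g_\alpha)\le 0$; as $\delta\ge 0$ on $R$, only generators with $\delta(g_\alpha)=0$ contribute, with coefficients $c_\alpha\in R_0=R_O$.

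It then remains to match the degree-zero generators with the matrices in the statement. A $2$-minor of $A_{i,j}$ has $\delta=0$ iff all four of its entries are observed variables, i.e.\ when $i,j\in O$ and both chosen columns are indexed by a vertex $k\in O$ or a pair $(l,m)$ with $l,m\in O$; these are precisely the $2$-minors of the observed submatrices described in the proposition. (The degree-zero linear generators are those observed $s_{ij}$, $t_{ijk}$ admitting no trek; these must be adjoined whenever they occur, though in examples such as \Cref{ex:hidden} every observed pair has a trek and none appear.) Combining the two inclusions yields $\mathcal{I}^{\leq 3}_O(G)=(\mathcal{I}^{\leq 3}(G))_0=\langle\text{observed submatrix minors}\rangle$, as claimed.

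The step I expect to carry the real weight is not the grading bookkeeping, which is formal, but verifying its hypotheses: one must be sure that \Cref{th:generators} truly supplies a $\delta$-homogeneous generating set of nonnegative degree and that its degree-zero part coincides, on the nose, with the observed submatrix minors. In particular, it is exactly the upstream hypothesis --- entering through the degree formulas of \Cref{lem:grading} --- that guarantees $\delta\ge 0$; without it the ``no negative degrees'' principle would fail and the degree-zero component could genuinely require generators of higher $\delta$-degree, so this is the hypothesis to keep in view throughout.
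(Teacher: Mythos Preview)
Your proof is correct and follows essentially the same approach as the paper: both arguments use the grading from \Cref{lem:grading} to show that the observed subring is exactly the ``top-degree'' part and then restrict the generating set from \Cref{th:generators} accordingly. Your formulation with the single nonnegative grading $\delta=3\dg_1-\dg_2$ is a mild repackaging of the paper's bidegree argument (the paper observes that observed variables have bidegree $(1,3)$ while any other variable has bidegree $(1,q)$ with $q<3$, which is precisely $\delta>0$), and your explicit mention of the linear generators is a welcome clarification that the paper leaves implicit.
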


\begin{proof}
We first show that for a polytree $G$,
the ideal $\mathcal{I}^{\leq 3}(G)$ is homogeneous with respect to the multigrading
introduced in Lemma \ref{lem:grading}.
By Theorem \ref{th:generators}, the ideal $\mathcal{I}^{\leq 3}(G)$ is generated by determinants of $2 \times 2$ matrices. 
Their rows are given by vertices $i$ and $j$.
Similarly the columns are parametrized either by vertices $k$ or pairs $(l,m)$, or a combination of the two.
In any case, one can readily check that the resulting binomial is homogeneous.

Since the ideal is homogeneous,
it is enough to show that each homogeneous polynomial $f \in \mathcal{I}^{\leq 3}_O(G)$ is generated by minors of the required form.
Since all indeterminates in $\CC[s_{ij},t_{ijk}:i,j,k\in O]$ have degree $(1,3)$,
we obtain $\dg f = (r,3r)$, for some natural number $r$.
Since $f$ belongs to $\mathcal{I}^{\leq 3}(G)$ it can be written as a finite sum
$f = \sum_{l} u_lM_l$,
where the $u_i$ are monomials and each $M_l$ is $2$-minor coming from a trek-matrix $A_{i,j}$. 
Since the minors $M_l$ are homogeneous,
we can decompose the sum into
\[f = \sum_{\dg u_lM_l = (r,3r)}u_lM_l + \sum_{\dg u_lM_l \neq (r,3r)}u_lM_l. \]
But then the second part must be zero, implying $f = \sum_{\dg u_lM_l = (r,3r)}u_lM_l$.
This can only be achieved if all indeterminates appearing in $u_l$ and $M_l$ are in the set $\{s_{ij},t_{ijk}:i,j,k\in O\}$.
Indeed, using any of the remaining indeterminates would force the degree to be $(p,q)$ with $q$ strictly less than $3p$.
Thus we have found a description of $f$ of the required form.
\end{proof}

\section{Moment Polytopes}\label{sec:polytopes}

A natural approach to understanding the algebro-geometric properties of a toric variety is to consider the polytope associated to its monomial parametrization. 
In this section we focus on the polytope associated to the third-order moments variety $\mathcal{V}(\mathcal{I}^3(G))$ following the steps of \cite{sullivant2008} in the covariance case. Note that $\mathcal{I}^3(G)=\mathcal{I}^{\leq 3}(G)\cap\CC[t_{ijk}]$ and this ideal is generated by the 2-minors of trek-matrices that correspond to columns in the variables $t_{ijk}$. 

Given a polytree $G = (V,E)$, for any minimal 3-trek between $i,j,k$, we define the vector $e_{ijk}\in\mathbb R^{|V| + |E|}$ of exponents of the monomial $\phi_G(t_{ijk})=b_{\tp(i,j,k)}\prod_{l\rightarrow m\in \mathcal{T}(i,j,k)}\lambda_{lm}\in\mathbb{R}[b_l,\lambda_{lm}]$. Let $(\mathbf{z},\mathbf{y})$ be the coordinates of $e_{ijk}$, where $\mathbf{z}=\left( z_l\right)_{l\in V}$ are the exponents of $b_\ell$ for $\ell\in V$ and $\mathbf{y}=\left(y_{lm}\right)_{l\rightarrow m\in E}$ are the exponents of $\lambda_{\ell m}$ for $\ell\to m\in E$.

\begin{definition}
Given a polytree $G$, its associated third-order moment polytope is 
$$P_G^{(3)}=\mathrm{conv}\left(e_{ijk}: i,j,k \mbox{ such that a 3-trek between $i$, $j$ and $k$ exists}\right).$$
\end{definition}

\begin{remark}\label{rem:coord}
Note that for any polytree the coordinates of $e_{ijk}$ take values in $\lbrace 0,1,2\rbrace$. Furthermore, the vector $\mathbf{z}$ has a single non-zero coordinate $z_l=1$ for $l=\tp(i,j,k)$. Moreover, in the minimal 3-trek between $i,j,k$ an edge can appear at most twice.
\end{remark}


\begin{theorem}\label{th:polytope}
For a fully observed polytree $G$,
the third-order moment polytope $P_G^{(3)}$ is the solution to the following set of equations and inequalities
\begin{equation}\label{eq1}
    z_l \geq 0 \text{ for all } l \in V, 
\end{equation}
\begin{equation}\label{eq2}
    y_{lm} \geq 0 \text{ for all } l \to m \in E, 
\end{equation}
\begin{equation}\label{eq3}
    \sum_{l \in V}z_l =1,
\end{equation}
\begin{equation}\label{eq4}
    2z_l + \sum_{h \in pa(l)}y_{hl} - y_{lm} \geq 0 \text{ for all m such that } l \to m \in E, 
\end{equation}
\begin{equation}\label{eq5}
    3z_l + \sum_{h \in pa(l)}y_{hl} - \sum_{m \in ch(l)}y_{lm} \geq 0 \text{ for all } l \in V.
\end{equation}
\end{theorem}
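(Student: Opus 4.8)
The plan is to prove the two inclusions $P_G^{(3)}\subseteq Q$ and $Q\subseteq P_G^{(3)}$, where $Q$ denotes the polyhedron cut out by \eqref{eq1}--\eqref{eq5}. Throughout I would read the coordinates through the flow dictionary already implicit in Remark~\ref{rem:coord}: for the point $e_{ijk}$ the entry $z_l$ is the indicator of $l=\tp(i,j,k)$, while $y_{lm}$ counts how many of the three directed paths of the unique simple $3$-trek traverse the edge $l\to m$. Viewing the three paths as $3$ units of flow emitted at the top $v=\tp(i,j,k)$ and absorbed at the sinks $i,j,k$, I record for later the conservation identity
\[\sum_{m\in\mathrm{ch}(l)}y_{lm}-\sum_{h\in\pa(l)}y_{hl}=3z_l-\sigma_l,\]
where $\sigma_l\ge 0$ is the multiplicity of $l$ in the multiset $\{i,j,k\}$. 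A preliminary remark is that $Q$ is bounded: a straightforward induction over a topological ordering, using \eqref{eq1}--\eqref{eq4}, bounds every $y_{lm}$, so that $Q$ is a genuine polytope.

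For the inclusion $P_G^{(3)}\subseteq Q$ I would simply check that each generator $e_{ijk}$ satisfies \eqref{eq1}--\eqref{eq5}. Constraints \eqref{eq1}, \eqref{eq2} hold because exponents are nonnegative, and \eqref{eq3} holds because exactly one variable $b_{\tp(i,j,k)}$ occurs (Proposition~\ref{th:param}). For \eqref{eq5} the conservation identity gives $3z_l+\sum_{h\in\pa(l)}y_{hl}-\sum_{m\in\mathrm{ch}(l)}y_{lm}=\sigma_l\ge 0$. For \eqref{eq4}, if $l\neq v$ then, since in a polytree every path reaching $l$ enters through its unique neighbour on the $v$--$l$ path, one has $y_{lm}\le\sum_{h\in\pa(l)}y_{hl}$; and if $l=v$ then $y_{vm}\le 2$, because three paths sharing the edge $v\to m$ would put $m$ on all three paths, contradicting simplicity of the trek. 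This is exactly \eqref{eq4}.

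The substantial direction is $Q\subseteq P_G^{(3)}$, for which I would first identify the lattice points of $Q$ and then establish that $Q$ is an integral polytope. Since $\sum_l z_l=1$ with $z$ integral forces a single top $v$, propagating \eqref{eq4}, \eqref{eq5} from the sources of $G$ makes all flow vanish outside the descendants of $v$, and the remaining constraints read as a nonnegative integer flow emitted only at $v$, of total strength $\le 3$, with every single edge carrying at most $2$. By the tree structure such a flow decomposes uniquely into $3$ paths from $v$ (trivial paths allowed), i.e.\ it is precisely the edge-multiplicity vector of the simple $3$-trek whose sink multiset is recorded by the leakages $\sigma_l=\sum_{h\in\pa(l)}y_{hl}-\sum_{m\in\mathrm{ch}(l)}y_{lm}$. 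Hence the lattice points of $Q$ are exactly the $e_{ijk}$, and in particular all lie in $P_G^{(3)}$. It then remains to prove that every vertex of $Q$ is a lattice point, since this yields $Q=\mathrm{conv}(\text{vertices of }Q)\subseteq P_G^{(3)}\subseteq Q$ and closes the argument.

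The crux, and the step I expect to be the main obstacle, is this integrality, which I would reduce to classical network-flow integrality by slicing over the top. Fixing $z=\delta_v$ cuts out the face $F_v=Q\cap\{z_l=0:\ l\neq v\}$, on which \eqref{eq4}--\eqref{eq5} become the feasible region of a single-source flow on the out-tree of $v$ with integral capacities; its node--arc incidence matrix is totally unimodular, so $F_v$ is integral with lattice points $\{e_{ijk}:\tp(i,j,k)=v\}$. Thus any vertex of $Q$ at which $z$ is $0/1$ lies in some $F_v$ and is integral. The genuinely delicate case is a vertex carrying positive mass at two tops $v\neq w$: here I would show it is not a vertex by exhibiting a feasible perturbation that transfers an $\varepsilon$ of top-mass from $w$ to $v$ while rerouting flow along the tree-paths out of $v$ and $w$, the slacks $2z_l$ and $3z_l$ in \eqref{eq4}, \eqref{eq5} being exactly what keeps both signs $\pm\varepsilon$ feasible. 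Equivalently, one proves $Q=\mathrm{conv}\big(\bigcup_{v}F_v\big)$ by decomposing each $(\mathbf z,\mathbf y)\in Q$ as $\sum_v z_v(\delta_v,\mathbf y^{(v)})$ with every $\mathbf y^{(v)}$ a top-$v$ flow; establishing the existence of this per-top flow splitting is the technical heart of the proof and is where I would concentrate the effort.
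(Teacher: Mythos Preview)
Your forward inclusion $P_G^{(3)}\subseteq Q$ is clean; the flow/conservation reformulation is a nice way to verify \eqref{eq4}--\eqref{eq5} and matches the paper's case check in spirit. The classification of the lattice points of $Q$ and the integrality of each top-slice $F_v$ (via single-source tree flows with integral capacities) are also correct.

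Where your proof diverges from the paper is precisely at the step you flag as the crux. The paper does \emph{not} slice by top or invoke total unimodularity. Instead it runs a one-sided peeling argument directly on an arbitrary point $(\mathbf z^0,\mathbf y^0)\in Q$ (after reducing to the case $y^0_{lm}>0$ for all edges, which drops to a smaller polytree/forest otherwise). The key structural device is the notion of a \emph{marked edge}: an edge $l\to m$ at which \eqref{eq4} is tight. A short computation combining two instances of \eqref{eq4} with \eqref{eq5} shows that each vertex has at most one outgoing marked edge. Using this, the paper builds a single simple $3$-trek $\tau$ with $\tp(\tau)$ a vertex of positive $z^0$-mass, whose three paths run to sinks of $G$, and which contains every marked edge it meets with matching multiplicity. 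One then checks, constraint by constraint, that $(\mathbf z^0,\mathbf y^0)-\lambda\, e_\tau$ (renormalised) stays in $Q$ for small $\lambda>0$. Hence every extreme point of $Q$ equals some $e_{ijk}$.

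By contrast, your route needs either the two-sided perturbation at a mixed-top vertex or, equivalently, the decomposition $(\mathbf z,\mathbf y)=\sum_v z_v(\delta_v,\mathbf y^{(v)})$ with each $(\delta_v,\mathbf y^{(v)})\in F_v$. Neither is automatic: when \eqref{eq4} or \eqref{eq5} is tight at $l\in\{v,w\}$ the naive mass transfer $z_v\mapsto z_v+\varepsilon$, $z_w\mapsto z_w-\varepsilon$ violates those constraints for one sign of $\varepsilon$, so the required compensating $y$-rerouting is exactly the nontrivial content. In effect you would be reproving, in aggregate form, what the paper achieves one trek at a time via the marked-edge construction. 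Your programme is sound, and if the per-top splitting can be established it yields a modular proof that outsources integrality to network-flow theory; but as written the proposal stops short of the decisive step, whereas the paper's marked-edge peeling is self-contained and avoids the splitting altogether.
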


\begin{proof} Let $Q_G$ be the polytope defined by the inequalities above. 
\begin{enumerate}
    \item[Step 1.] $Q_G$ is bounded. Indeed, from (\ref{eq1}) and (\ref{eq3}), one gets $0\leq z_l\leq 1$. If $l$ is a source of the polytree, then $y_{lm}\leq 2z_l\leq 2$ by (\ref{eq4}). Moreover, (\ref{eq4}) gives a bound on any $y_{lm}$ in terms of $z_l$ and $y_{hl}$ for any $h\rightarrow l\in E$, which can be recursively bounded starting from a source node.
    \item[Step 2.] $P_G^{(3)}\subseteq Q_G$. It is enough to check that the defining inequalities of $Q_G$ are satisfied by every vertex $e_{ijk}=(\mathbf{z},\mathbf{y})$ in the polytope $P_G^{(3)}$. In other words, we need to check if the inequalities are satisfied by the exponents of variables $b_l$ and $\lambda_{l,m}$ in the minimal 3-trek $\tau$ between $i,j,k$.
    
    (\ref{eq1}), (\ref{eq2}) and (\ref{eq3}) are trivially satisfied by \Cref{rem:coord}. 
    
    To prove (\ref{eq4}) we start by fixing an edge $l\rightarrow m\in E$. Note that if $l\rightarrow m$ is not in $\tau$, then $y_{lm}=0$ and the inequation is trivially satisfied. Let us assume that $l\rightarrow m$ is in $\tau$. There are two possibilities:
    \begin{itemize}
        \item If $z_l=1$, then $l$ is the top of $\tau$, hence for any $h\rightarrow l$ we have $y_{hl}=0$. Moreover, the edge $l\rightarrow m$ can appear at most twice in $\tau$, namely $y_{lm}=1,\,2$:
        $$2z_l + \sum_{h \in pa(l)}y_{hl} - y_{lm}=2-y_{lm}\geq 0.$$
        \item If $z_l=0$, then the top of $\tau$ is an ancestor of $l$. There is a single edge $h\rightarrow l$ in $\tau$ (the one in the path from the top of $\tau$ to $l$) with $y_{hl}=1,\,2$. If $y_{lm}=2$, then $h\rightarrow l$ also appears twice, hence
        $$2z_l + \sum_{h \in pa(l)}y_{hl} - y_{lm}=y_{hl}-y_{lm}\geq 0.$$
    \end{itemize}
    Finally, a similar reasoning proves (\ref{eq5}):
     \begin{itemize}
        \item If $z_l=1$, $\tau$ has at most 3 outgoing edges from $l$ (counting multiplicity), hence
        $$3z_l + \sum_{h \in pa(l)}y_{hl} - \sum_{m \in ch(l)}y_{lm}=3-\sum_{m \in ch(l)}y_{lm}\geq 0.$$
        \item If $z_l=0$, $\tau$ has at most 2 ingoing edges to $l$ and the number of outgoing edges from $l$ cannot exceed the latter:
        $$3z_l + \sum_{h \in pa(l)}y_{hl} - \sum_{m \in ch(l)}y_{lm}=\sum_{h \in pa(l)}y_{hl}-\sum_{m \in ch(l)}y_{lm}\geq 0.$$
    \end{itemize}

    \item[Step 3.] $Q_G\subseteq P_G^{(3)}$. We prove that the extreme points of $Q_G$ are a subset of the extreme points of $P_G$. More precisely, we fix $(\mathbf{z}^0,\mathbf{y}^0)\in Q_G$ and want to prove that it can be expressed as
    \begin{equation}\label{eq:point2}
    (\mathbf{z}^0,\mathbf{y}^0)=\lambda(\mathbf{z}^1,\mathbf{y}^1)+(1-\lambda)(\mathbf{z}^2,\mathbf{y}^2)    
    \end{equation}
    for some $(\mathbf{z}^1,\mathbf{y}^1)=e_{ijk}$, $\lambda>0$ and $(\mathbf{z}^2,\mathbf{y}^2)\in Q_G$. Since $Q_G$ is bounded, the inclusion of extreme points follows.
    
    Without loss of generality we can assume that all $y^0_{lm}>0$. Otherwise the problem reduces to a smaller polytree or forest. The polytope associated to a forest is the direct join of the polytopes associated to its connected components. 
    This follows by the direct generalization of \cite[Proposition 3.6]{sullivant2008}  to third-moment ideals. Indeed, if $G$ is a disjoint union of two subgraphs $G_1$ and $G_2$, then
\begin{equation*}\label{prop:disjointgraphs}
    \mathcal{I}^{\leq 3}(G)=
    \mathcal{I}^{\leq 3}(G_1) 
    + \mathcal{I}^{\leq 3}(G_2)
  + \langle s_{ij}, t_{ikj} | i \in V(G_1), j \in V(G_2)  \rangle.
\end{equation*}
    
    We call an edge of $G$ where equality of (\ref{eq4}) holds for $(\mathbf{z}^0,\mathbf{y}^0)$ a \emph{marked edge}. 
    \begin{itemize}
        \item Claim 1: For each vertex in the graph, there is at most one outgoing marked edge.
   \end{itemize}
   
   It follows by Claim $1$, that we can build a minimal 3-trek $\tau$ between $i,j,k$ such that 
   \begin{enumerate}
       \item the coordinate of $\mathbf{z}^0$ corresponding to the vertex $\tp(i,j,k)$ is strictly positive, 
       \item $i,j,k$ are sinks of the graph $G$, 
       \item marked edges of $G$ are either contained in $\tau$ or not incident to $\tau$,
       \item if there is an outgoing marked edge from the top of $\tau$, it has multiplicity 2,
       \item the multiplicity of a marked edge is equal to the multiplicity of the previous edge in $\tau$.
   \end{enumerate}
  Let $e_{ijk}=(\mathbf{z}^1,\mathbf{y}^1)$ be the vector of exponents of $\tau$.
   \begin{itemize}
        \item Claim 2: The vector $(\mathbf{z}^2,\mathbf{y}^2)$ defined by \Cref{eq:point2} is in $Q_G$ for any $\lambda>0$ small enough.
    \end{itemize}
\end{enumerate}

\noindent
If both claims hold, then $Q_G=P_G^{(3)}$ and the theorem is proved.

\textit{Proof of Claim 1.} Let us assume that a vertex $l$ has two different outgoing marked edges $l\rightarrow m_1$ and $l\rightarrow m_2$, namely
\begin{equation}\label{eq:marked}
    2z_l^0 + \sum_{h \in pa(l)}y_{hl}^0 - y_{lm_i}^0=0,\quad{i=1,2}.
\end{equation}
Adding them up we obtain
$$4z_l^0 + 2\sum_{h \in pa(l)}y_{hl}^0-y_{lm_1}^0-y_{lm_2}^0=0,$$
and substracting $z_l^0 + \sum_{h \in pa(l)}y_{hl}^0$ yields
\begin{equation}\label{eq:contr}
 3z_l^0 + \sum_{h \in pa(l)}y_{hl}^0-y_{lm_1}^0-y_{lm_2}^0\leq 0.  
\end{equation}
If the inequality in (\ref{eq:contr}) was strict, then we would have a contradiction to (\ref{eq5}). Thus, (\ref{eq:contr}) must be an equality, hence
$z_l^0 + \sum_{h \in pa(l)}y_{hl}^0=0$. From (\ref{eq:marked}) it follows that $y_{lm_i}^0=0$ for $i=1,2$, which is a contradiction to the fact that all coordinates of $\mathbf{y}^0$ are strictly positive.

\textit{Proof of Claim 2.} We show that $(\mathbf{z}^2,\mathbf{y}^2)$ satisfies the defining equations of $Q_G$.
\begin{itemize}
    \item[(\ref{eq1})] For any $z^1_l=0$, $z^2_l\geq 0$ holds trivially. If $z^1_l=1$, then $l$ is the top of $\tau$, hence $z^0_l>0$ by construction of $\tau$ --- see condition (a). For $\lambda>0$ small, $z^0_l-\lambda=(1-\lambda)z^2_l\geq 0$.
    \item[(\ref{eq2})] Since $y_{lm}^0>0$ by assumption, an analogous argument follows for $\lambda>0$ small.
    \item[(\ref{eq3})] From $1=\sum_{l\in V} z_l^0=\lambda\sum_{l\in V} z_l^1+(1-\lambda)\sum_{l\in V} z_l^2=\lambda+(1-\lambda)\sum_{l\in V} z_l^2$, it follows that $\sum_{l\in V} z_l^2=1$.
    \item[(\ref{eq4})] For any $l\rightarrow m\in E$ we want to prove that 
    \begin{equation}\label{eq:proof4}
    2z_l^0 + \sum_{h \in pa(l)}y_{hl}^0 -y_{lm}^0-\lambda\left(2z_l^1+\sum_{h \in pa(l)}y_{hl}^1 - y_{lm}^1\right)\geq 0.    
    \end{equation}
    If $l\rightarrow m$ is not a marked edge, then
    $2z_l^0 + \sum_{h \in pa(l)}y_{hl}^0 -y_{lm}^0>0$ and hence (\ref{eq:proof4}) holds for $\lambda>0$ small enough.
    
    If $l\rightarrow m$ is a marked edge but $l\notin\tau$, then $2z_l^1+\sum_{h \in pa(l)}y_{hl}^1 - y_{lm}^1=0$ by construction of $\tau$ --- see condition (c) --- and hence (\ref{eq:proof4}) trivially holds.
    
    If $l\rightarrow m$ is a marked edge and $l\in\tau$, then (\ref{eq:proof4}) holds if and only if $2z_l^1+\sum_{h \in pa(l)}y_{hl}^1 - y_{lm}^1=0$. We need to study two cases separately:
    \begin{itemize}
      \item $z_l^1=1$: $2z_l^1+\sum_{h \in pa(l)}y_{hl}^1 - y_{lm}^1=2-y_{lm}$. (\ref{eq:proof4}) holds if and only if $y_{lm}=2$, , which is true by (d).
      \item $z_l^1=0$: $2z_l^1+\sum_{h \in pa(l)}y_{hl}^1 - y_{lm}^1=y_{hl}-y_{lm}$. (\ref{eq:proof4}) holds if and only if $y_{hl}=y_{lm}$, which is true by (e).
    \end{itemize}
    
    \item[(\ref{eq5})] We want to prove that 
    \begin{equation}\label{eq:proof5}
    3z_l^0 + \sum_{h \in pa(l)}y_{hl}^0 - \sum_{m \in ch(l)}y_{lm}^0-\lambda\left(3z_l^1+\sum_{h \in pa(l)}y_{hl}^1 - \sum_{m \in ch(l)}y_{lm}^1\right)\geq 0.    
    \end{equation}
    If $z_l^1=0$, then either $l$ is a sink of the polytree or $\sum_{h \in pa(l)}y_{hl}^1=\sum_{m \in ch(l)}y_{lm}^1=1,2$ --- $i,j,k$ are sinks of $\tau$ by (b). In the latter, $$3z_l^1+\sum_{h \in pa(l)}y_{hl}^1 - \sum_{m \in ch(l)}y_{lm}^1=\sum_{h \in pa(l)}y_{hl}^1 - \sum_{m \in ch(l)}y_{lm}^1=0.$$ 
    In the former, $\sum_{h \in pa(l)}y_{hl}^1$ cannot be cancelled out because $l$ has no children. Then (\ref{eq:proof5}) holds if and only if 
    $$3z_l^0 + \sum_{h \in pa(l)}y_{hl}^0 - \sum_{m \in ch(l)}y_{lm}^0>0.$$
    Since there are no children this translates into $3z_l^0 + \sum_{h \in pa(l)}y_{hl}^0$ being strictly positive, which is true by the assumption $y_{hl}^0>0$.
    
    If $z_l^1=1$, then either $l$ is a sink of the polytree or $\sum_{m \in ch(l)}y_{lm}^1=3$ (because $i,j,k$ are sinks). In the latter, since there are no parents of $l$ in $\tau$, then $3z_l^1+\sum_{h \in pa(l)}y_{hl}^1 - \sum_{m \in ch(l)}y_{lm}^1=0$. In the former, $3z_l^1$ cannot be cancelled out because there are no children and the same reasoning as in the case of sinks for $z_l^1=0$ applies. 
\end{itemize}
\end{proof}

\begin{example}\label{ex:polytope}
We revisit the graph $G$ from Figure \ref{fig:tetrapus}. Here, $P^{(3)}_G$ is a polytope in $\RR^9$ and its points have coordinates $(z_1,z_2,z_3,z_4,z_5,y_{12},y_{13},y_{14},y_{15})$. In particular, the boundary of the polytope contains points such as $e_{123}=(1,0,0,0,1,1,0,0)$ and $e_{223}=(1,0,0,0,2,1,0,0)$. By \Cref{th:polytope}, the defining inequalities of $P^{(3)}_G$ are, for $l\in\{1,2,3,4,5\}$ and $m\in\{2,3,4,5\}$, 

\begin{equation*}
z_l\geq 0,\quad y_{1m}\geq 0,\quad \sum_{l}z_l =1,
\quad 2z_1 - y_{1m} \geq 0,
\quad 3z_1 - \sum_{m}y_{1m} \geq 0.
\end{equation*}
\end{example}

In \cite[Section 5]{sullivant2008}, from the covariance polytope of a polytree $G$, Sullivant derives normality and Cohen-Macaulayness of $\CC[s_{ij}]/\mathcal{I}^2(G)$ via polyhedral techniques.
Unfortunately, this program cannot be directly extended to third-order moments. The main obstacle is that, as opposed to \cite[Corollary 5.2]{sullivant2008}, there exists no unimodular pulling triangulation of $P^{(3)}$ in the general case, as we detail in the next example.

\begin{example}\label{ex:nonsquarefree}
The existence of unimodular pulling triangulations of $P^{(3)}_G$ is equivalent to the existence of squarefree initial ideals of $\mathcal{I}^{3}(G)$ with respect to reverse lexicographic term orders; see \cite[Chapter 8]{sturmfels1996grobner} or \cite{haase2021existence} for more details. Continuing with the setting of \Cref{ex:polytope}, we will prove that for the considered star tree $G$ all reverse lexicographic term orders give rise to initial ideals with squares among its generators.

Note that $\mathcal{I}^{3}(G)$ is an ideal generated by quadratic binomials in $\CC[t_{ijk}]$, a ring with 35 variables that, by symmetry, can be classified into seven categories: $t_{111}$, $t_{iii}$, $t_{1ii}$, $t_{11i}$, $t_{iij}$, $t_{1ij}$ and $t_{ijk}$, where $i,j,k\in\lbrace 2,3,4,5\rbrace$ are all different. Reordering the variables within a category will not have an effect on the existence of squares, since such variables play the exact same role.

Since $t_{iij}t_{jkk}-t_{ijk}^2\in\mathcal{I}^{3}(G)$, one of the two monomials will belong to the minimal set of monomial generators of the initial ideal. If we want to avoid squares, then $t_{iij}>t_{ijk}$ must hold. In a similar fashion, considering the polynomials $t_{11i}t_{ijj}-t_{1ij}^2$ and 
$t_{iij}t_{jkk}-t_{ijk}^2$ in the ideal,
we obtain
\begin{equation}\label{eq:ordering}
t_{111},\,t_{1ii} > t_{11i} > t_{1ij},\quad
t_{iij} > t_{1ij},\,t_{ijk}.
\end{equation}

Finally, since $t_{iii}$ does not belong to the ideal, it is enough to computationally check all initial ideals with respect to reverse lexicographical orderings with all possible combinations of the remaining six categories of variables that satisfy (\ref{eq:ordering}). \texttt{Macaulay2} verifies that the initial ideals of all 28 possible arrangements of variables fail to be squarefree because of the unique cubic binomial of the form
$t_{iij}t_{ikl}^2-t_{iik}t_{iil}t_{jkl}$ in the corresponding Gröbner basis of $\mathcal{I}^3(G)$.
\end{example}

\section{The Unexplored Forest of Non-Trees}\label{sec:nontrees}
 
 In this final section we present some examples concerning graphs that are not trees and may not even be acyclic. Nevertheless, we observe that some features persist when considering their third-order moment varieties.
 
\begin{example}
\label{ex:7.1}
Let $G$ be the graph with edges 
$1 \rightarrow 2$,
$2 \rightarrow 3$ and 
$1 \rightarrow 3$
depicted in Figure \ref{fig:triangle}.
The ideal $\mathcal{I}^{\leq 3}(G)$ of the model is generated by the ideal $I_3$ of the $3$-minors of the matrix
\begin{equation*} 
\begin{blockarray}{cccccccc}
& 1 & 2 & 11 & 12 & 13 & 22 & 23 \\[2pt]
\begin{block}{c(ccccccc)}
1 & s_{11} & s_{12} & t_{111} & t_{112} & t_{113} & t_{122} & t_{123} \\
2 & s_{12} & s_{22} & t_{112} & t_{122} & t_{123} & t_{222} & t_{223} \\
3 & s_{13} & s_{23} & t_{113} & t_{123} & t_{133} & t_{223} & t_{223} \\
\end{block}
\end{blockarray},
\end{equation*}
plus the ideal $I_2$ of all $2$-minors of the submatrix
\begin{equation*} 
\begin{blockarray}{ccccc}
& 1 & 11 & 12 & 13 \\[2pt]
\begin{block}{c(cccc)}
1 & s_{11}  & t_{111} & t_{112} & t_{113}  \\
2 & s_{12}  & t_{112} & t_{122} & t_{123}  \\
3 & s_{13}  & t_{113} & t_{123} & t_{133} \\
\end{block}
\end{blockarray},
\end{equation*}
as well as the polynomial 
\begin{equation*}
    f = s_{23}t_{133}t_{222}-s_{23}t_{123}t_{223}-s_{22}t_{133}t_{223}+s_{13}t_{223}^3+s_{22}t_{123}t_{233}-s_{13}t_{222}t_{233}.
\end{equation*}
The polynomial $f$ is up to sign equal to the determinant of the matrix
\begin{equation*} 
\begin{blockarray}{cccc}
& 13 & 22 & 23 \\[2pt]
\begin{block}{c(ccc)}
\emptyset & s_{13} & s_{22} & s_{23}\\
2 & t_{123} & t_{222} & t_{223} \\
3 & t_{133} & t_{223} & t_{223} \\
\end{block}
\end{blockarray},
\end{equation*}
which is not a matrix of the form previously observed because of the row indexed by the empty set.
If one allows saturations, then $f$ becomes redundant
as we have
\[\mathcal{I}^{\leq 3}(G) = I_3+I_2 + \langle f \rangle = (I_3+I_2):s_{11}^{\infty}. \]
\end{example} 
\begin{example}
Let $G$ be the diamond graph on $4$ vertices with edges $0 \rightarrow 1$, $0 \rightarrow 2$, $1 \rightarrow 3$ and $2 \rightarrow 3$. The ideal $\mathcal{I}_O^{\leq 3}(G)$ is minimally generated by $77$ quadratic binomials ($2$-minors) and $138$ cubics.  It is possible to give a (non-minimal) set of generators that are $2$- or $3$-minors analogous to what we have seen for Example~\ref{ex:7.1}.

If we partition the random variables into hidden and observed ones as $H=\{0\}$ and $O=\{1,2,3\}$, then the ideal $\mathcal{I}_O^{\leq 3}(G)$ is given by the vanishing of $3$-minors of the matrix
\[
\begin{blockarray}{cccccccc}
& 1 & 2 & 11 & 12 & 13 & 22 & 23\\[2pt]
\begin{block}{c(ccccccc)}
1& s_{11} & s_{12} & t_{111} & t_{112} & t_{113} & t_{122} & t_{123}\\
2& s_{12} & s_{22} & t_{112} & t_{122} & t_{123} & t_{222} & t_{223}\\
3& s_{13} & s_{23} & t_{113} & t_{123} & t_{133} & t_{223} & t_{233}\\
\end{block}
\end{blockarray}.
\]
Note that for this observed variable ideal no indexing with the empty set is necessary.
\end{example}

It is also interesting to consider cyclic structures.

\begin{example}
Let $V = \{ 1, 2 \}$ and consider $G$ to be the $2$-cycle with edges $1 \rightarrow 2$ and $2 \rightarrow 1$. The model  $\mathcal M^{\leq 3}(G)$ is a hypersurface in $\mathit{PD}(V)\times \mathrm{Sym}_3(V)$. Indeed, the ideal $\mathcal{I}^{\leq 3}(G)$ is generated by the determinant of the matrix 
\begin{equation*} 
\begin{blockarray}{cccc}
& 11 & 12 & 22 \\[2pt]
\begin{block}{c(ccc)}
\emptyset & s_{11} & s_{12} & s_{22}\\
1 & t_{111} & t_{112} & t_{122} \\
2 & t_{112} & t_{122} & t_{222} \\
\end{block}
\end{blockarray},
\end{equation*}
so we note that even in the presence of cycles, there are matrices similar to the trek-matrices that define the model.
\end{example}

\begin{example}
Let $V = \{ 1, 2, 3 \}$ and consider $G$ to be the 3-cycle with edges $1 \rightarrow 2$, $2 \rightarrow 3$ and $3 \rightarrow 1$. We observe that the maximal minors of the following matrix vanish on $\mathcal M^{\leq 3}(G)$:
\begin{equation*} 
\begin{blockarray}{ccccccc}
 & 11 & 12 & 23 & 22 & 13 & 33 \\[2pt]
\begin{block}{c(cccccc)}
\emptyset & s_{11} & s_{12} & s_{23} & s_{22} & s_{13} & s_{33} \\
1 & t_{111} & t_{112} & t_{123} & t_{122} & t_{113} & t_{133} \\
2 & t_{112} & t_{122} & t_{223} & t_{222} & t_{123} & t_{233} \\
3 & t_{113} & t_{123} & t_{233} & t_{223} & t_{133} & t_{333} \\
\end{block}
\end{blockarray}.
\end{equation*}
In fact, 12 of the $3$-minors also vanish, although they are not enough to describe the ideal. We computed that the ideal $\mathcal{I}^{\leq 3}(G)$ is actually minimally generated by 148 polynomials: 12 cubics, 41 quartics, 67 quintics, 24 sextics and 4 septics.
It would be interesting to better understand these generators, in particular, the origin of higher degree generators.
\end{example}

The examples above motivate the further study of generators involving second and third order moments for general non-Gaussian linear structural equation models, and we expect substantial future work in this direction.

\section*{Acknowledgements}

This work is part of a project that has received funding from the European Research Council (ERC) under the European Union’s Horizon 2020 research and innovation programme (Grant agreement No.~883818).  
Elina Robeva was supported by NSERC Discovery Grant (DGECR-2020-00338).

\appendix

\section{Proof of  \Cref{th:generators}}\label{app:proof}

We provide a complete proof of \Cref{th:generators}.

\begin{proof}

\emph{Binomials in $s$.} For binomials in $\mathcal{I}^{\leq 3}(G)\cap\CC[s_{i j}]$, this follows from \cite[Theorem 5.8]{sullivant2008}.

\emph{Binomials in $t$.} Consider a binomial $f$ in $\mathcal{I}^{\leq 3}(G)\cap\CC[t_{i j k}]$. We want to prove that $f\in I\cap\CC[t_{i j k}]$.
Since $\mathcal{I}^{\leq 3}(G)$ is prime, we can assume $f$ has no common factors. A binomial of degree $r$ can be written as
$$f=t_{i j k}t_{i_2 j_2 k_2}\cdots t_{i_r j_r k_r}- t_{i' j' k'}t_{i'_2 j'_2 k'_2}\cdots t_{i'_r j'_r k'_r},$$

\noindent
where we consider the lexicographic order previously defined.
Then $i=i'$ and $\tp(i,j,k)=\tp(i,j',k')=b$, otherwise $f\notin\ker\phi_G$.

Using the standard form tableau representation this would be written as
$$f=\left[\begin{array}{l|l|l}
  \underline{b}\alpha\cdots i  &  \underline{b}\beta\cdots j &
  \underline{b}\gamma\cdots k \\
  \vdots &   \vdots &  \vdots\\
\end{array}\right]
-\left[\begin{array}{l|l|l}
  \underline{b}\alpha\cdots i   &  \underline{b}\delta\cdots j' &
  \underline{b}\varepsilon\cdots k'\\
   \vdots   &  \vdots  &  \vdots\\
\end{array}\right],$$

\noindent
where $\alpha,\beta,\gamma,\delta,\varepsilon$ denote the first nodes in each path of the 3-treks represented by variables $t_{i j k}$ and $t_{i j' k'}$ as in Figure \ref{fig:tableau}. Note that 

\begin{itemize}
    \item $\alpha\leq\beta\leq\gamma$, not all equal;
    \item $\alpha\leq\delta\leq\varepsilon$, not all equal;
    \item $\beta\leq\delta$.
\end{itemize}

\begin{figure}[h]
\centering
\begin{tikzpicture}
\node(b) at (0,1) []    {$b$};
\node(alpha) at (-2,0) []      {$\alpha$};
\node(beta) at (-1,0) []      {$\beta$};
\node(gamma) at (0,0) []      {$\gamma$};
\node(delta) at (1,0) []      {$\delta$};
\node(epsilon) at (2,0) []      {$\varepsilon$};
\node(d1) at (-2,-1) []      {$\vdots$};
\node(d2) at (-1,-1) []      {$\vdots$};
\node(d3) at (0,-1) []      {$\vdots$};
\node(d4) at (1,-1) []      {$\vdots$};
\node(d5) at (2,-1) []      {$\vdots$};
\node(i) at (-2,-2) []      {$i$};
\node(j) at (-1,-2) []      {$j$};
\node(k) at (0,-2) []      {$k$};
\node(j') at (1,-2) []      {$j'$};
\node(k') at (2,-2) []      {$k'$};
\draw (b) edge[->] (alpha) (alpha) edge[->] (d1)  (d1) edge[->] (i);
\draw[green] 
(b) edge[->] (beta) (b) edge[->] (gamma) (beta) edge[->] (d2) (gamma) edge[->] (d3) (d2) edge[->] (j) (d3) edge[->] (k);
\draw[blue] 
(b) edge[->] (delta) (b) edge[->] (epsilon) (delta) edge[->] (d4) (epsilon) edge[->] (d5) (d4) edge[->] (j') (d5) edge[->] (k');
\end{tikzpicture}
\caption{3-treks represented by $t_{i j k}$ (black and green paths) and $t_{i j' k'}$ (black and blue paths).}
\label{fig:tableau}
\end{figure}
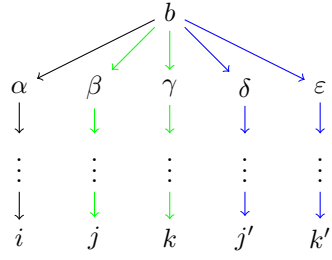

We consider two essentially different situations depending on where the first disagreement between $m_1$ and $m_2$ in $f=m_1-m_2$ takes place: either right after the top in the first row of the tableau (cases I and II) or later on (case III).

\emph{\textbf{Case I:}} Disagreement right after the top in the middle column, that is, $\beta<\delta$. Note that $\alpha\leq\beta<\delta\leq\varepsilon$. 


The string $b\beta$ must appear somewhere in $m_2$:

$$f=\left[\begin{array}{l|l|l}
  \underline{b}\alpha\cdots i  &  {\color{purple}\underline{b}\beta}\cdots j &
  \underline{b}\gamma\cdots k \\
  \vdots &   \vdots &  \vdots\\
\end{array}\right]
-\left[\begin{array}{l|l|l}
  \underline{b}\alpha\cdots i   &  \underline{b}\delta\cdots j' &
  \underline{b}\varepsilon\cdots k'\\
  \underline{b}\zeta\cdots i'' &  {\color{purple}\underline{b}\beta}\cdots j'' &
  \underline{b}\eta\cdots k''  \\
   \vdots   &  \vdots  &  \vdots\\
\end{array}\right],$$

Consider the quadratic binomial 

$$q_1-q_2=\left[\begin{array}{l|l|l}
  \underline{b}\alpha\cdots i   &  \underline{b}\delta\cdots j' &
  \underline{b}\varepsilon\cdots k'\\
  \underline{b}\zeta\cdots i'' &  {\color{purple}\underline{b}\beta}\cdots j'' &
  \underline{b}\eta\cdots k''  
\end{array}\right]
-\left[\begin{array}{l|l|l}
  \underline{b}\alpha\cdots i   &  {\color{purple}\underline{b}\beta}\cdots j'' &
  \underline{b}\varepsilon\cdots k'\\
  \underline{b}\zeta\cdots i'' &  \underline{b}\delta\cdots j' &
  \underline{b}\eta\cdots k''  
\end{array}\right],$$

\noindent
with $m_2=q_1m$.

Under certain conditions (namely $\tp(i,j'',k')=\tp(i'',j',k'')=b$), this tableau represents the polynomial $q=t_{i j' k'}t_{i'' j'' k''}-t_{i j'' k'}t_{i'' j' k''}\in\mathcal{I}^{\leq 3}(G)$. 
These conditions are necessary because otherwise the rows in the tableau do not correspond to variables in $\CC[t_{i j k}]$.

If such conditions are satisfied, then the polynomial
$$f':=m_1-q_2m=m_1-m_2+(q_1-q_2)m=f+qm\in\mathcal{I}^{\leq 3}(G)$$
has fewer disagreements in the first row of its corresponding tableau.
The quadatic binomial $q$ arises as a 2-minor of $A_{j',j''}$, hence we can think of this binomial move as swaping the path from the top to $j'$ in the 3-trek represented by $t_{i j' k'}$ and the path from the same top to $j''$ in $t_{i'' j'' k''}$:

\begin{figure}[h]
\centering
\begin{tikzpicture}
\node(b) at (0,1) []    {$b$};
\node(alpha) at (-2.5,0) []      {$\alpha$};
\node(delta) at (-1.5,0) []      {$\delta$};
\node(epsilon) at (-0.5,0) []      {$\varepsilon$};
\node(zeta) at (0.5,0) []      {$\zeta$};
\node(beta) at (1.5,0) []      {$\beta$};
\node(eta) at (2.5,0) []      {$\eta$};
\node(d1) at (-2.5,-1) []      {$\vdots$};
\node(d2) at (-1.5,-1) []      {$\vdots$};
\node(d3) at (-0.5,-1) []      {$\vdots$};
\node(d4) at (0.5,-1) []      {$\vdots$};
\node(d5) at (1.5,-1) []      {$\vdots$};
\node(d6) at (2.5,-1) []      {$\vdots$};
\node(i) at (-2.5,-2) []      {$i$};
\node(j') at (-1.5,-2) []      {$j'$};
\node(k') at (-0.5,-2) []      {$k'$};
\node(i'') at (0.5,-2) []      {$i''$};
\node(j'') at (1.5,-2) []      {$j''$};
\node(k'') at (2.5,-2) []      {$k''$};
\draw[green] 
(b) edge[->] (alpha)
(b) edge[->] (delta) 
(b) edge[->] (epsilon) 
(alpha) edge[->] (d1) 
(delta) edge[->] (d2) 
(epsilon) edge[->] (d3) 
(d1) edge[->] (i) 
(d2) edge[->] (j') 
(d3) edge[->] (k');
\draw[blue] 
(b) edge[->] (zeta)
(b) edge[->] (beta) 
(b) edge[->] (eta) 
(zeta) edge[->] (d4) 
(beta) edge[->] (d5) 
(eta) edge[->] (d6) 
(d4) edge[->] (i'') 
(d5) edge[->] (j'') 
(d6) edge[->] (k'');
\end{tikzpicture}\quad\quad\begin{tikzpicture}
\node(b) at (0,1) []    {$b$};
\node(alpha) at (-2.5,0) []      {$\alpha$};
\node(delta) at (-1.5,0) []      {$\delta$};
\node(epsilon) at (-0.5,0) []      {$\varepsilon$};
\node(zeta) at (0.5,0) []      {$\zeta$};
\node(beta) at (1.5,0) []      {$\beta$};
\node(eta) at (2.5,0) []      {$\eta$};
\node(d1) at (-2.5,-1) []      {$\vdots$};
\node(d2) at (-1.5,-1) []      {$\vdots$};
\node(d3) at (-0.5,-1) []      {$\vdots$};
\node(d4) at (0.5,-1) []      {$\vdots$};
\node(d5) at (1.5,-1) []      {$\vdots$};
\node(d6) at (2.5,-1) []      {$\vdots$};
\node(i) at (-2.5,-2) []      {$i$};
\node(j') at (-1.5,-2) []      {$j'$};
\node(k') at (-0.5,-2) []      {$k'$};
\node(i'') at (0.5,-2) []      {$i''$};
\node(j'') at (1.5,-2) []      {$j''$};
\node(k'') at (2.5,-2) []      {$k''$};
\draw[green] 
(b) edge[->] (alpha)
(b) edge[->] (epsilon) 
(b) edge[->] (beta)
(alpha) edge[->] (d1) 
(beta) edge[->] (d5) 
(epsilon) edge[->] (d3) 
(d1) edge[->] (i) 
(d3) edge[->] (k')
(d5) edge[->] (j'');
\draw[blue] 
(b) edge[->] (delta) 
(b) edge[->] (zeta) 
(b) edge[->] (eta) 
(delta) edge[->] (d2) 
(zeta) edge[->] (d4) 
(eta) edge[->] (d6) 
(d2) edge[->] (j') 
(d4) edge[->] (i'') 
(d6) edge[->] (k'');
\end{tikzpicture}
\caption{Swap of paths between two 3-treks encoded by a 2-minor of $A_{j',j''}$.}
\label{fig:swap}
\end{figure}
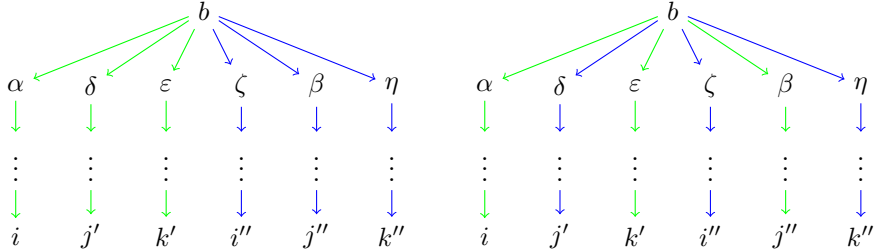

If $\tp(i,j'',k')=\tp(i'',j',k'')=b$ does not hold, then this move can not be applied.
This situation only occurs if either $\alpha=\beta=\varepsilon$ or $\delta=\zeta=\eta$. 

Since $\alpha<\varepsilon$ by assumption, we only need to study the case $\delta=\zeta=\eta$. Then 

$$f=\left[\begin{array}{l|l|l}
  \underline{b}\alpha\cdots i  &  {\color{purple}\underline{b}\beta}\cdots j &
  \underline{b}\gamma\cdots k \\
  \vdots &   \vdots &  \vdots\\
\end{array}\right]
-\left[\begin{array}{l|l|l}
  \underline{b}\alpha\cdots i   &  \underline{b}\delta\cdots j' &
  \underline{b}\varepsilon\cdots k'\\
  \underline{b}\delta\cdots i'' &  {\color{purple}\underline{b}\beta}\cdots j'' &
  \underline{b}\delta\cdots k''  \\
   \vdots   &  \vdots  &  \vdots\\
\end{array}\right].$$

Under suitable conditions that will be made precise as we move on, we can perform two quadratic moves that will allow us to reduce the disagreements in the first row.

If $\delta\neq\gamma$, there exists a row in $m_2$ with top $b$ with at most one occurrence of the pattern $b\delta$. Indeed, assume that $f$ has $r+1$ rows with top $b$. By construction, the first row of $m_1$ has no occurrences of $b\delta$ whereas the first of row of $m_2$ has at least one occurrence of $b\delta$. Note that $m_1$ can have at most $2r$ instances of $b\delta$ overall. If all the remaining $r$ rows of $m_2$ have 2 occurrences of $b\delta$, there would be at least $2r+1$ instances of $b\delta$, which poses a contradiction.
Therefore, we can write our binomial as

\begin{equation}\label{eq:specialBinomial}
f=\left[\begin{array}{l|l|l}
  \underline{b}\alpha\cdots i  &  {\color{purple}\underline{b}\beta}\cdots j &
  \underline{b}\gamma\cdots k \\
  \vdots &   \vdots &  \vdots\\
\end{array}\right]
-\left[\begin{array}{l|l|l}
  \underline{b}\alpha\cdots i   &  \underline{b}\delta\cdots j' &
  \underline{b}\varepsilon\cdots k'\\
  \underline{b}\delta\cdots i'' &  {\color{purple}\underline{b}\beta}\cdots j'' &
  \underline{b}\delta\cdots k''  \\
  \underline{b}\alpha'\cdots i''' &  
  \underline{b}\beta'\cdots j''' &
  \underline{b}\gamma'\cdots k''' \\
   \vdots   &  \vdots  &  \vdots\\
\end{array}\right],
\end{equation}

\noindent
where $\alpha',\beta',\gamma'$ are not all equal and at most one of them is equal to $\beta$. 

What are the quadratic moves that reduce disagreements in the first row of $m_2$ and which are the conditions on $\alpha',\beta',\gamma'$ that make these moves possible?

$$q_{j'',j'''}=\left[\begin{array}{l|l|l}
 \underline{b}\delta\cdots i'' &  {\color{purple}\underline{b}\beta}\cdots j'' &
  \underline{b}\delta\cdots k''  \\
  \underline{b}\alpha'\cdots i''' &  
  \underline{b}\beta'\cdots j''' &
  \underline{b}\gamma'\cdots k''' \\
\end{array}\right]
-\left[\begin{array}{l|l|l}
 \underline{b}\delta\cdots i'' &  \underline{b}\beta'\cdots j''' &
  \underline{b}\delta\cdots k''  \\
  \underline{b}\alpha'\cdots i''' &  
  {\color{purple}\underline{b}\beta}\cdots j'' &
  \underline{b}\gamma'\cdots k''' \\
\end{array}\right]$$

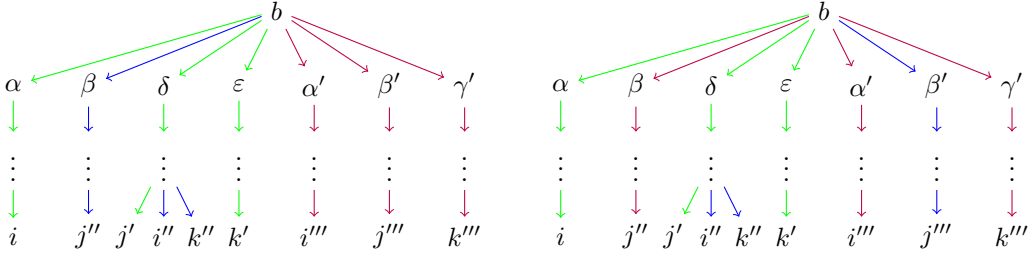
\begin{figure}[h]
\centering
\begin{tikzpicture}
\node(b) at (0,1) []    {$b$};
\node(alpha) at (-3.5,0) []      {$\alpha$};
\node(beta) at (-2.5,0) []      {$\beta$};
\node(delta) at (-1.5,0) []      {$\delta$};
\node(epsilon) at (-0.5,0) []      {$\varepsilon$};
\node(zeta) at (0.5,0) []      {$\alpha'$};
\node(betap) at (1.5,0) []      {$\beta'$};
\node(eta) at (2.5,0) []      {$\gamma'$};
\node(d0) at (-3.5,-1) []      {$\vdots$};
\node(d1) at (-2.5,-1) []      {$\vdots$};
\node(d2) at (-1.5,-1) []      {$\vdots$};
\node(d3) at (-0.5,-1) []      {$\vdots$};
\node(d4) at (0.5,-1) []      {$\vdots$};
\node(d5) at (1.5,-1) []      {$\vdots$};
\node(d6) at (2.5,-1) []      {$\vdots$};
\node(i) at (-3.5,-2) []      {$i$};
\node(j'') at (-2.5,-2) []      {$j''$};
\node(j') at (-2,-2) []      {$j'$};
\node(i'') at (-1.5,-2) []      {$i''$};
\node(k'') at (-1,-2) []      {$k''$};
\node(k') at (-0.5,-2) []      {$k'$};
\node(i''') at (0.5,-2) []      {$i'''$};
\node(j''') at (1.5,-2) []      {$j'''$};
\node(k''') at (2.5,-2) []      {$k'''$};
\draw[green] 
(b) edge[->] (alpha)
(b) edge[->] (delta) 
(b) edge[->] (epsilon) 
(alpha) edge[->] (d0) 
(delta) edge[->] (d2) 
(epsilon) edge[->] (d3) 
(d0) edge[->] (i) 
(d2) edge[->] (j') 
(d3) edge[->] (k');
\draw[blue]
(b) edge[->] (beta)
(beta) edge[->] (d1)
(d1) edge[->] (j'')
(d2) edge[->] (i'')
(d2) edge[->] (k'');
\draw[purple] 
(b) edge[->] (zeta)
(b) edge[->] (betap) 
(b) edge[->] (eta) 
(zeta) edge[->] (d4) 
(betap) edge[->] (d5) 
(eta) edge[->] (d6) 
(d4) edge[->] (i''') 
(d5) edge[->] (j''') 
(d6) edge[->] (k''');
\end{tikzpicture}\quad\quad\begin{tikzpicture}
\node(b) at (0,1) []    {$b$};
\node(alpha) at (-3.5,0) []      {$\alpha$};
\node(beta) at (-2.5,0) []      {$\beta$};
\node(delta) at (-1.5,0) []      {$\delta$};
\node(epsilon) at (-0.5,0) []      {$\varepsilon$};
\node(zeta) at (0.5,0) []      {$\alpha'$};
\node(betap) at (1.5,0) []      {$\beta'$};
\node(eta) at (2.5,0) []      {$\gamma'$};
\node(d0) at (-3.5,-1) []      {$\vdots$};
\node(d1) at (-2.5,-1) []      {$\vdots$};
\node(d2) at (-1.5,-1) []      {$\vdots$};
\node(d3) at (-0.5,-1) []      {$\vdots$};
\node(d4) at (0.5,-1) []      {$\vdots$};
\node(d5) at (1.5,-1) []      {$\vdots$};
\node(d6) at (2.5,-1) []      {$\vdots$};
\node(i) at (-3.5,-2) []      {$i$};
\node(j'') at (-2.5,-2) []      {$j''$};
\node(j') at (-2,-2) []      {$j'$};
\node(i'') at (-1.5,-2) []      {$i''$};
\node(k'') at (-1,-2) []      {$k''$};
\node(k') at (-0.5,-2) []      {$k'$};
\node(i''') at (0.5,-2) []      {$i'''$};
\node(j''') at (1.5,-2) []      {$j'''$};
\node(k''') at (2.5,-2) []      {$k'''$};
\draw[green] 
(b) edge[->] (alpha)
(b) edge[->] (delta) 
(b) edge[->] (epsilon) 
(alpha) edge[->] (d0) 
(delta) edge[->] (d2) 
(epsilon) edge[->] (d3) 
(d0) edge[->] (i) 
(d2) edge[->] (j') 
(d3) edge[->] (k');
\draw[blue]
(b) edge[->] (betap)
(betap) edge[->] (d5) 
(d5) edge[->] (j''') 
(d2) edge[->] (i'')
(d2) edge[->] (k'');
\draw[purple] 
(b) edge[->] (beta)
(beta) edge[->] (d1)
(d1) edge[->] (j'')
(b) edge[->] (zeta)
(b) edge[->] (eta) 
(zeta) edge[->] (d4) 
(eta) edge[->] (d6) 
(d4) edge[->] (i''') 
(d6) edge[->] (k''');
\end{tikzpicture}
\caption{The quadratic move is only applicable iff $\delta\neq\beta'$ and $\beta,\alpha',\delta'$ are not all equal.}
\label{fig:TwoMoves1}
\end{figure}

$$q_{j',j'''}=\left[\begin{array}{l|l|l}
 \underline{b}\alpha\cdots i &  \underline{b}\delta\cdots j' &
  \underline{b}\varepsilon\cdots k'  \\
 \underline{b}\alpha'\cdots i''' &  
  {\color{purple}\underline{b}\beta}\cdots j'' &
  \underline{b}\gamma'\cdots k''' \\
\end{array}\right]
-\left[\begin{array}{l|l|l}
 \ \underline{b}\alpha\cdots i &   {\color{purple}\underline{b}\beta}\cdots j'' &
  \underline{b}\varepsilon\cdots k'  \\
 \underline{b}\alpha'\cdots i''' &  
 \underline{b}\delta\cdots j'
  &
  \underline{b}\gamma'\cdots k''' \\
\end{array}\right]$$

\begin{figure}[h]
\centering
\begin{tikzpicture}
\node(b) at (0,1) []    {$b$};
\node(alpha) at (-3.5,0) []      {$\alpha$};
\node(beta) at (-2.5,0) []      {$\beta$};
\node(delta) at (-1.5,0) []      {$\delta$};
\node(epsilon) at (-0.5,0) []      {$\varepsilon$};
\node(zeta) at (0.5,0) []      {$\alpha'$};
\node(betap) at (1.5,0) []      {$\beta'$};
\node(eta) at (2.5,0) []      {$\gamma'$};
\node(d0) at (-3.5,-1) []      {$\vdots$};
\node(d1) at (-2.5,-1) []      {$\vdots$};
\node(d2) at (-1.5,-1) []      {$\vdots$};
\node(d3) at (-0.5,-1) []      {$\vdots$};
\node(d4) at (0.5,-1) []      {$\vdots$};
\node(d5) at (1.5,-1) []      {$\vdots$};
\node(d6) at (2.5,-1) []      {$\vdots$};
\node(i) at (-3.5,-2) []      {$i$};
\node(j'') at (-2.5,-2) []      {$j''$};
\node(j') at (-2,-2) []      {$j'$};
\node(i'') at (-1.5,-2) []      {$i''$};
\node(k'') at (-1,-2) []      {$k''$};
\node(k') at (-0.5,-2) []      {$k'$};
\node(i''') at (0.5,-2) []      {$i'''$};
\node(j''') at (1.5,-2) []      {$j'''$};
\node(k''') at (2.5,-2) []      {$k'''$};
\draw[green] 
(b) edge[->] (alpha)
(b) edge[->] (delta) 
(b) edge[->] (epsilon) 
(alpha) edge[->] (d0) 
(delta) edge[->] (d2) 
(epsilon) edge[->] (d3) 
(d0) edge[->] (i) 
(d2) edge[->] (j') 
(d3) edge[->] (k');
\draw[blue]
(b) edge[->] (betap)
(betap) edge[->] (d5) 
(d5) edge[->] (j''') 
(d2) edge[->] (i'')
(d2) edge[->] (k'');
\draw[purple] 
(b) edge[->] (beta)
(beta) edge[->] (d1)
(d1) edge[->] (j'')
(b) edge[->] (zeta)
(b) edge[->] (eta) 
(zeta) edge[->] (d4) 
(eta) edge[->] (d6) 
(d4) edge[->] (i''') 
(d6) edge[->] (k''');
\end{tikzpicture}\quad\quad
\begin{tikzpicture}
\node(b) at (0,1) []    {$b$};
\node(alpha) at (-3.5,0) []      {$\alpha$};
\node(beta) at (-2.5,0) []      {$\beta$};
\node(delta) at (-1.5,0) []      {$\delta$};
\node(epsilon) at (-0.5,0) []      {$\varepsilon$};
\node(zeta) at (0.5,0) []      {$\alpha'$};
\node(betap) at (1.5,0) []      {$\beta'$};
\node(eta) at (2.5,0) []      {$\gamma'$};
\node(d0) at (-3.5,-1) []      {$\vdots$};
\node(d1) at (-2.5,-1) []      {$\vdots$};
\node(d2) at (-1.5,-1) []      {$\vdots$};
\node(d3) at (-0.5,-1) []      {$\vdots$};
\node(d4) at (0.5,-1) []      {$\vdots$};
\node(d5) at (1.5,-1) []      {$\vdots$};
\node(d6) at (2.5,-1) []      {$\vdots$};
\node(i) at (-3.5,-2) []      {$i$};
\node(j'') at (-2.5,-2) []      {$j''$};
\node(j') at (-2,-2) []      {$j'$};
\node(i'') at (-1.5,-2) []      {$i''$};
\node(k'') at (-1,-2) []      {$k''$};
\node(k') at (-0.5,-2) []      {$k'$};
\node(i''') at (0.5,-2) []      {$i'''$};
\node(j''') at (1.5,-2) []      {$j'''$};
\node(k''') at (2.5,-2) []      {$k'''$};
\draw[green] 
(b) edge[->] (alpha)
(b) edge[->] (beta)
(beta) edge[->] (d1)
(d1) edge[->] (j'')
(b) edge[->] (epsilon) 
(alpha) edge[->] (d0) 
(epsilon) edge[->] (d3) 
(d0) edge[->] (i) 
(d3) edge[->] (k');
\draw[blue]
(b) edge[->] (betap)
(betap) edge[->] (d5) 
(d5) edge[->] (j''') 
(d2) edge[->] (i'')
(d2) edge[->] (k'');
\draw[purple] 
(b) edge[->] (delta) 
(b) edge[->] (zeta)
(b) edge[->] (eta) 
(delta) edge[->] (d2) 
(zeta) edge[->] (d4) 
(eta) edge[->] (d6) 
(d2) edge[->] (j') 
(d4) edge[->] (i''') 
(d6) edge[->] (k''');
\end{tikzpicture}
\caption{The move is only applicable iff $\delta,\alpha',\delta'$ are not all equal, since $\alpha=\beta=\varepsilon$ is not possible by assumption.}
\label{fig:TwoMoves2}
\end{figure}
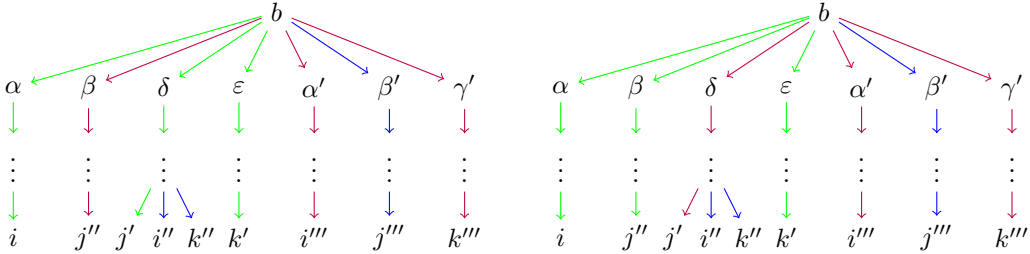

The conditions $\delta\neq\beta'$ (from Figure \ref{fig:TwoMoves1}) and $\delta,\alpha',\gamma'$ not all equal (from Figure \ref{fig:TwoMoves2}) needed in order to apply the quadratic moves can be translated into requiring that the pattern $b\delta$ appears at most once in the third row of $m_2$. Hence there is a single situation where we cannot apply the quadratic moves: $\beta=\alpha'=\gamma'$ (from Figure \ref{fig:TwoMoves1}).

Assume that $\beta=\alpha'=\gamma'$. In the second term of $f$ in (\ref{eq:specialBinomial}), we can now use the third row to reduce the disagreements in the first one provided $\beta=\delta=\beta'$ does not hold, see Figure \ref{fig:UsingRow3}.

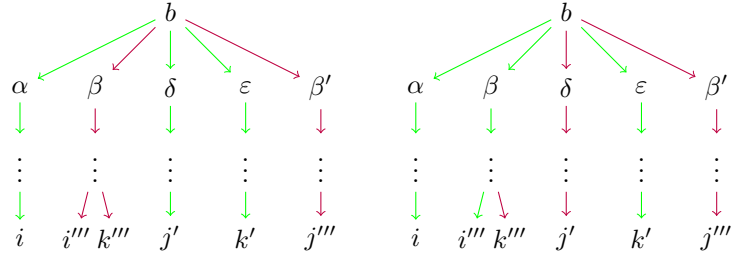
\begin{figure}[h]
\centering
\begin{tikzpicture}
\node(b) at (0,1) []    {$b$};
\node(alpha) at (-2,0) []      {$\alpha$};
\node(beta) at (-1,0) []      {$\beta$};
\node(delta) at (0,0) []      {$\delta$};
\node(epsilon) at (1,0) []      {$\varepsilon$};
\node(betap) at (2,0) []      {$\beta'$};
\node(d0) at (-2,-1) []      {$\vdots$};
\node(d1) at (-1,-1) []      {$\vdots$};
\node(d2) at (0,-1) []      {$\vdots$};
\node(d3) at (1,-1) []      {$\vdots$};
\node(d4) at (2,-1) []      {$\vdots$};
\node(i) at (-2,-2) []      {$i$};
\node(i''') at (-1.25,-2) []      {$i'''$};
\node(k''') at (-0.75,-2) []      {$k'''$};
\node(j') at (0,-2) []      {$j'$};
\node(k') at (1,-2) []      {$k'$};
\node(j''') at (2,-2) []      {$j'''$};
\draw[green] 
(b) edge[->] (alpha)
(b) edge[->] (delta) 
(b) edge[->] (epsilon) 
(alpha) edge[->] (d0) 
(delta) edge[->] (d2) 
(epsilon) edge[->] (d3) 
(d0) edge[->] (i) 
(d2) edge[->] (j') 
(d3) edge[->] (k');
\draw[purple] 
(b) edge[->] (beta)
(beta) edge[->] (d1)
(d1) edge[->] (i''')
(d1) edge[->] (k''')
(b) edge[->] (betap) 
(betap) edge[->] (d4) 
(d4) edge[->] (j''');
\end{tikzpicture}\quad\quad\begin{tikzpicture}
\node(b) at (0,1) []    {$b$};
\node(alpha) at (-2,0) []      {$\alpha$};
\node(beta) at (-1,0) []      {$\beta$};
\node(delta) at (0,0) []      {$\delta$};
\node(epsilon) at (1,0) []      {$\varepsilon$};
\node(betap) at (2,0) []      {$\beta'$};
\node(d0) at (-2,-1) []      {$\vdots$};
\node(d1) at (-1,-1) []      {$\vdots$};
\node(d2) at (0,-1) []      {$\vdots$};
\node(d3) at (1,-1) []      {$\vdots$};
\node(d4) at (2,-1) []      {$\vdots$};
\node(i) at (-2,-2) []      {$i$};
\node(i''') at (-1.25,-2) []      {$i'''$};
\node(k''') at (-0.75,-2) []      {$k'''$};
\node(j') at (0,-2) []      {$j'$};
\node(k') at (1,-2) []      {$k'$};
\node(j''') at (2,-2) []      {$j'''$};
\draw[green] 
(b) edge[->] (alpha)
(b) edge[->] (epsilon) 
(b) edge[->] (beta)
(beta) edge[->] (d1)
(d1) edge[->] (i''')
(alpha) edge[->] (d0) 
(epsilon) edge[->] (d3) 
(d0) edge[->] (i) 
(d3) edge[->] (k');
\draw[purple] 
(b) edge[->] (delta) 
(delta) edge[->] (d2) 
(d2) edge[->] (j')
(d1) edge[->] (k''')
(b) edge[->] (betap) 
(betap) edge[->] (d4) 
(d4) edge[->] (j''');
\end{tikzpicture}
\caption{This quadratic move can be applied unless $\beta=\delta=\beta'$.}
\label{fig:UsingRow3}
\end{figure}

But this "bad" scenario yields $\beta=\delta=\beta'=\alpha'=\gamma'$, which can never occur, otherwise the third row of $m_2$ would not represent a variable in $\CC[t_{ijk}]$.

On the other hand, if $\gamma=\delta$, the pattern $b\beta$ must appear somewhere in the second term of $f$
$$f=\left[\begin{array}{l|l|l}
  \underline{b}\alpha\cdots i  &  {\color{purple}\underline{b}\beta}\cdots j &
  \underline{b}\gamma\cdots k \\
  \vdots &   \vdots &  \vdots\\
\end{array}\right]
-\left[\begin{array}{l|l|l}
  \underline{b}\alpha\cdots i   &  \underline{b}\gamma\cdots j' &
  \underline{b}\varepsilon\cdots k'\\
  \underline{b}\zeta\cdots i'' &  \underline{b}\eta\cdots j' &
  {\color{purple}\underline{b}\beta}\cdots k''\\
   \vdots   &  \vdots  &  \vdots\\
\end{array}\right]$$
and can be used to reduce the disagreements between $b\beta$ and $b\varepsilon$ as long as $\zeta=\eta=\varepsilon$ does not hold. Otherwise
$$f=\left[\begin{array}{l|l|l}
  \underline{b}\alpha\cdots i  &  {\color{purple}\underline{b}\beta}\cdots j &
  \underline{b}\gamma\cdots k \\
  \vdots &   \vdots &  \vdots\\
\end{array}\right]
-\left[\begin{array}{l|l|l}
  \underline{b}\alpha\cdots i   &  \underline{b}\gamma\cdots j' &
  \underline{b}\varepsilon\cdots k'\\
  \underline{b}\varepsilon\cdots i'' &  \underline{b}\varepsilon\cdots j'' &
  {\color{purple}\underline{b}\beta}\cdots k''\\ \underline{b}\alpha'\cdots i''' &  
 \underline{b}\beta'\cdots j'''
  &
  \underline{b}\gamma'\cdots k''' \\
   \vdots   &  \vdots  &  \vdots\\
\end{array}\right]$$
and there is a third row in $m_2$ with at most one occurrence of $b\varepsilon$. Indeed, note that the first row of $m_1$ always has one less occurrence of $b\varepsilon$ than the first row of $m_2$, regardless of whether $\gamma=\varepsilon$ or not. As long as $\alpha'=\beta'=\varepsilon$ does not hold, two quadratic moves as in Figures \ref{fig:TwoMoves1} and \ref{fig:TwoMoves2} reduce the disagreements. 

The remaining case $\alpha'=\beta'=\varepsilon$ corresponds to 
$$f=\left[\begin{array}{l|l|l}
  \underline{b}\alpha\cdots i  &  {\color{purple}\underline{b}\beta}\cdots j &
  \underline{b}\gamma\cdots k \\
  \vdots &   \vdots &  \vdots\\
\end{array}\right]
-\left[\begin{array}{l|l|l}
  \underline{b}\alpha\cdots i   &  \underline{b}\gamma\cdots j' &
  \underline{b}\varepsilon\cdots k'\\
  \underline{b}\varepsilon\cdots i'' &  \underline{b}\varepsilon\cdots j'' &
  {\color{purple}\underline{b}\beta}\cdots k''\\ {\color{purple}\underline{b}\beta}\cdots i''' &  
 {\color{purple}\underline{b}\beta}\cdots j'''
  &
  \underline{b}\gamma'\cdots k''' \\
   \vdots   &  \vdots  &  \vdots\\
\end{array}\right].$$

Note that once the existence of the third row is ensured, we no longer have to worry about the lexicographic ordering, hence we omit the second row of $m_2$ and rewrite the third one as

$$f=\left[\begin{array}{l|l|l}
  \underline{b}\alpha\cdots i  &  {\color{purple}\underline{b}\beta}\cdots j &
  \underline{b}\gamma\cdots k \\
  \vdots &   \vdots &  \vdots\\
\end{array}\right]
-\left[\begin{array}{l|l|l}
  \underline{b}\alpha\cdots i   &  \underline{b}\gamma\cdots j' &
  \underline{b}\varepsilon\cdots k'\\
  \underline{b}\gamma'\cdots k''' &
 {\color{purple}\underline{b}\beta}\cdots j'''
  &
   {\color{purple}\underline{b}\beta}\cdots i''' \\
   \vdots   &  \vdots  &  \vdots\\
\end{array}\right].$$

A single quadratic move reduces disagreements provided $\gamma'=\beta=\varepsilon$ does not hold. But note that this last equality would yield $\alpha'=\beta'=\gamma'$, which is a contradiction and hence the move is always allowed.

\emph{\textbf{Case II:}} Disagreement right after the top only in the third column, that is $\beta=\delta$ and $\gamma<\varepsilon$.

The string $b\gamma$ must appear somewhere in the second term of $f$:

$$f=\left[\begin{array}{l|l|l}
  \underline{b}\alpha\cdots i  &  \
  \underline{b}\beta\cdots j &
  {\color{purple}\underline{b}\gamma}\cdots k \\
  \vdots &   \vdots &  \vdots\\
\end{array}\right]
-\left[\begin{array}{l|l|l}
  \underline{b}\alpha\cdots i   &  \underline{b}\beta\cdots j' &
  \underline{b}\varepsilon\cdots k'\\
  \underline{b}\zeta\cdots i'' &  \underline{b}\eta\cdots j'' &
  {\color{purple}\underline{b}\gamma}\cdots k''\\
   \vdots   &  \vdots  &  \vdots\\
\end{array}\right],$$

If $\tp(i,j',k'')=\tp(i'',j'',k')$, we can apply the quadratic binomial move 

$$\left[\begin{array}{l|l|l}
  \underline{b}\alpha\cdots i   &  
  \underline{b}\beta\cdots j' &
  \underline{b}\varepsilon\cdots k'\\
  \underline{b}\zeta\cdots i'' &  
  \underline{b}\eta\cdots j'' &
  {\color{purple}\underline{b}\gamma}\cdots k''  
\end{array}\right]
-\left[\begin{array}{l|l|l}
  \underline{b}\alpha\cdots i   &  
  \underline{b}\beta\cdots j' &
  {\color{purple}\underline{b}\gamma}\cdots k''\\
  \underline{b}\zeta\cdots i'' & 
  \underline{b}\eta\cdots j'' &
  \underline{b}\varepsilon\cdots k'  
\end{array}\right]$$

\noindent
in order to reduce the number of disagreements. 

If there is no equality between the tops, then either $\alpha=\beta=\gamma$ or $\zeta=\eta=\varepsilon$. The first option is actually not possible because otherwise the first row of $m_1$ would not be a proper variable in $\CC[t_{i j k}]$, hence

$$f=\left[\begin{array}{l|l|l}
  \underline{b}\alpha\cdots i  &  \
  \underline{b}\beta\cdots j &
  {\color{purple}\underline{b}\gamma}\cdots k \\
  \vdots &   \vdots &  \vdots\\
\end{array}\right]
-\left[\begin{array}{l|l|l}
  \underline{b}\alpha\cdots i   &  \underline{b}\beta\cdots j' &
  \underline{b}\varepsilon\cdots k'\\
  \underline{b}\varepsilon\cdots i'' & 
  \underline{b}\varepsilon\cdots j'' &
  {\color{purple}\underline{b}\gamma}\cdots k''\\
   \vdots   &  \vdots  &  \vdots\\
\end{array}\right].$$

Since $\varepsilon\neq\gamma$ by assumption, there must be a third row with top $b$ with at most one occurrence of the pattern $b\varepsilon$ (see Case I):

$$f=\left[\begin{array}{l|l|l}
  \underline{b}\alpha\cdots i  &  \
  \underline{b}\beta\cdots j &
  {\color{purple}\underline{b}\gamma}\cdots k \\
  \vdots &   \vdots &  \vdots\\
\end{array}\right]
-\left[\begin{array}{l|l|l}
  \underline{b}\alpha\cdots i   &  \underline{b}\beta\cdots j' &
  \underline{b}\varepsilon\cdots k'\\
  \underline{b}\varepsilon\cdots i'' & 
  \underline{b}\varepsilon\cdots j'' &
  {\color{purple}\underline{b}\gamma}\cdots k''\\
  \underline{b}\alpha'\cdots i''' &  
  \underline{b}\beta'\cdots j''' &
  \underline{b}\gamma'\cdots k''' \\
   \vdots   &  \vdots  &  \vdots\\
\end{array}\right].$$

We can perform two quadratic moves as in Figure \ref{fig:TwoMoves1} and Figure \ref{fig:TwoMoves2} to reduce disagreements iff the following does not occur:
\begin{itemize}
    \item $\gamma'=\varepsilon$ or $\alpha'=\beta'=\gamma$;
    \item $\alpha'=\beta'=\varepsilon$.
\end{itemize}

As in Case I, not having $\gamma'=\varepsilon$ nor $\alpha'=\beta'=\varepsilon$ can be translated into having at most one occurrence of $b\varepsilon$ in the third row of $m_2$. An analogous argument to Case I shows that when $\alpha'=\beta'=\gamma$ we can always reduce disagreements in the first row using only the third row.

This ends the discussion on how to reduce disagreements whenever they come right after the top.

\emph{Case III:} Let us now assume that the first disagreement comes later on. Let $A$ be a string of characters such that

$$f=\left[\begin{array}{l|l|l}
  \underline{b}\alpha\cdots   &  \underline{b}B{\color{purple}x\beta}\cdots&
  \underline{b}\gamma\cdots \\
  \vdots &   \vdots &  \vdots\\
\end{array}\right]
-\left[\begin{array}{l|l|l}
  \underline{b}\alpha\cdots    &  \underline{b}Bx\beta'\cdots &
  \underline{b}\delta\cdots \\
   \vdots   &  \vdots  &  \vdots\\
\end{array}\right]$$

\noindent 
is in lexicographic ordering, namely $\beta <\beta'$ and the pattern $x\beta$ must appear in another row of $m_2$.

If $x\beta$ appears in a row where its top is $b'\neq x$, then there always exists a quadratic move

$$q=\left[\begin{array}{l|l|l}
  \underline{b}\alpha\cdots    &  \underline{b}Bx\beta'\cdots &
  \underline{b}\gamma\cdots \\
  \underline{b'}\varepsilon\cdots  &  \underline{b'}B'{\color{purple}x\beta}\cdots  &  \underline{b'}\eta\cdots\\
\end{array}\right]-
\left[\begin{array}{l|l|l}
  \underline{b}\alpha\cdots    &  \underline{b}B{\color{purple}x\beta}\cdots &
  \underline{b}\delta\cdots \\
  \underline{b'}\varepsilon\cdots     &  \underline{b'}B'x\beta'\cdots  &  \underline{b'}\eta\cdots\\
\end{array}\right]$$

that will reduce disagreements in the first row.

\begin{figure}[h]
\centering
\begin{tikzpicture}
\node(b) at (1,2) []    {$b$};
\node(A) at (-1,1.5) []      {$\vdots$};
\node(b') at (-2,1) []      {$b'$};
\node(D) at (1,1) []      {$\alpha$};
\node(E) at (2,1) []      {$\gamma$};
\node(A') at (-2,0) []      {$\vdots$};
\node(x) at (-2,-1) []      {$x$};
\node(alpha') at (-3,-2) []      {$\beta'$};
\node(alpha) at (-2,-2) []      {$\beta$};
\node(eta) at (-1,-2) []      {$\varepsilon$};
\node(zeta) at (0,-2) []      {$\eta$};
\draw[green] 
(b) edge[->] (A)
(A) edge[->] (b')
(b) edge[->] (D) 
(b) edge[->] (E)
(b') edge[->] (A')
(A') edge[->] (x)
(x) edge[->] (alpha');
\draw[blue]
(b') edge[->] (eta)
(b') edge[->] (zeta) 
(x) edge[->] (alpha); 
\end{tikzpicture}\quad\quad\begin{tikzpicture}
\node(b) at (1,2) []    {$b$};
\node(A) at (-1,1.5) []      {$\vdots$};
\node(b') at (-2,1) []      {$b'$};
\node(D) at (1,1) []      {$\alpha$};
\node(E) at (2,1) []      {$\delta$};
\node(A') at (-2,0) []      {$\vdots$};
\node(x) at (-2,-1) []      {$x$};
\node(alpha') at (-3,-2) []      {$\beta'$};
\node(alpha) at (-2,-2) []      {$\beta$};
\node(eta) at (-1,-2) []      {$\varepsilon$};
\node(zeta) at (0,-2) []      {$\eta$};
\draw[green] 
(b) edge[->] (A)
(A) edge[->] (b')
(b) edge[->] (D) 
(b) edge[->] (E)
(b') edge[->] (A')
(A') edge[->] (x)
(x) edge[->] (alpha);
\draw[blue]
(b') edge[->] (eta)
(b') edge[->] (zeta) 
(x) edge[->] (alpha'); 
\end{tikzpicture}
\caption{This quadratic move can always be applied because $\tp(\beta',\varepsilon,\eta)=\tp(\beta,\varepsilon,\eta)=b'$ and $\tp(\alpha,\beta',\delta)=\tp(\alpha,\beta,\delta)=b$ by construction. Note that these are partial moves, that is, they do not correspond directly to 2-minors of matrices $A_{i,j}$.
}
\label{fig:DisagreementsLaterOn}
\end{figure}
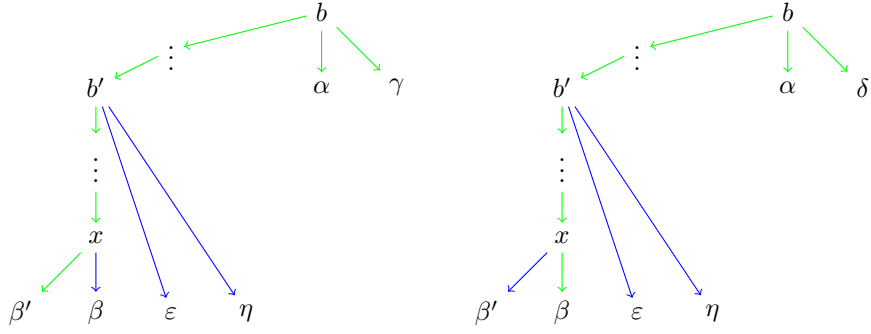

If $b'=x$, then a similar quadratic move can be applied provided we are not in the situation $\beta'=\varepsilon=\eta$. In such case, we would have

$$f=\left[\begin{array}{l|l|l}
  \underline{b}\alpha\cdots   &  \underline{b}B{\color{purple}x\beta}\cdots&
  \underline{b}\gamma\cdots \\
  \vdots &   \vdots &  \vdots\\
\end{array}\right]
-\left[\begin{array}{l|l|l}
  \underline{b}\alpha\cdots    &  \underline{b}Bx\beta'\cdots &
  \underline{b}\delta\cdots \\
  \underline{x}\beta'\cdots &
  \underline{x}\beta\cdots &
  \underline{x}\beta'\cdots \\
   \vdots   &  \vdots  &  \vdots\\
\end{array}\right]$$

\noindent
and an analogous argument to Cases I and II will show that there exists an additional row in $m_2$ where the pattern $x\beta'$ appears at most once and either we require two quadratic binomial moves involving all three rows or the third row can be used to apply a single move to reduce disagreements.

\emph{Mixed case.} Consider a polynomial $f\in\mathcal{I}^{\leq 3}(G)$ with both variables $s$ and $t$. In the standard form tableau, 2-treks come before 3-treks, hence the top row of $f$ will correspond to a 2-trek. If the disagreement can be reduced with another 2-trek we are in the already known setting of \cite{sullivant2008}, hence let us assume it requires 3-trek.

Let us start with the case where the first disagreement comes right after the top. We can then assume that 

$$f=\left[\begin{array}{l|l|l}
  \underline{a} \alpha\cdots   &  \underline{a}\beta\cdots  & \\
  \vdots &   \vdots &  \vdots\\
\end{array}\right]
-\left[\begin{array}{l|l|l}
  \underline{a}\gamma\cdots    &  \underline{a}\delta\cdots & \\
  \underline{a}\alpha\cdots & \underline{a}\varepsilon\cdots & \underline{a}\eta\cdots \\
   \vdots   &  \vdots  &  \vdots\\
\end{array}\right],$$

\noindent 
with $\alpha<\beta$ and $\gamma<\delta$. We assume $\alpha<\gamma$ but the case $\alpha=\gamma$ follows analogously. Note that we cannot assume anymore that the first columns of the first row in both terms are equal because the first path $a\alpha$ could be part of a 3-trek. We can apply the quadratic move

$$q=\left[\begin{array}{l|l|l}
 \underline{a}\gamma\cdots    &  \underline{a}\delta\cdots & \\
  \underline{a}\alpha\cdots & \underline{a}\varepsilon\cdots & \underline{a}\eta\cdots \\
\end{array}\right]-
\left[\begin{array}{l|l|l}
 \underline{a}\alpha\cdots    &  \underline{a}\delta\cdots & \\
  \underline{a}\gamma\cdots & \underline{a}\varepsilon\cdots & \underline{a}\eta\cdots \\
\end{array}\right]$$
in order to reduce disagreements provided $\gamma,\varepsilon,\eta$ are not all equal. If $\gamma=\varepsilon=\eta$, then there exists an additional third row in $m_2$

$$f=\left[\begin{array}{l|l|l}
  \underline{a} \alpha\cdots   &  \underline{a}\beta\cdots  & \\
  \vdots &   \vdots &  \vdots\\
\end{array}\right]
-\left[\begin{array}{l|l|l}
  \underline{a}\gamma\cdots    &  \underline{a}\delta\cdots & \\
  \underline{a}\alpha\cdots & \underline{a}\gamma\cdots & \underline{a}\gamma\cdots \\
   \vdots   &  \vdots  &  \vdots\\
\end{array}\right].$$

\noindent
with top $a$ that either corresponds to a 2-trek with no occurrences of $a\gamma$ or to a 3-trek with at most one occurence. Indeed, let $1+r+s$ be the number of rows with top $a$, where $s$ stands for the number of 3-treks. The maximum amount of $a\delta$ patterns in $m_2$ is $1+r+2s$ but in $m_1$ is at most $r+2s$, hence we cannot reach the maximum in $m_2$.
Note that the only situation where $a\gamma$ can occur in the first row of $m_1$ is if $\beta=\gamma$, but in this scenario we would be reducing the disagreement between $a\alpha$ and $a\delta$ instead.

From here we argue in the usual way: either 2 quadratic moves reduce disagreements, or the third row can be used directly to reduce. An analogous reasoning applies to disagreements that appear later on. \qedhere
\end{proof}

\bibliographystyle{plain}
\bibliography{refs}

\begin{thebibliography}{10}

\bibitem{comon:2008}
Pierre Comon, Gene Golub, Lek-Heng Lim, and Bernard Mourrain.
\newblock Symmetric tensors and symmetric tensor rank.
\newblock {\em SIAM Journal on Matrix Analysis and Applications},
  30(3):1254--1279, 2008.

\bibitem{MR3044565}
Jan Draisma, Seth Sullivant, and Kelli Talaska.
\newblock Positivity for {G}aussian graphical models.
\newblock {\em Adv. in Appl. Math.}, 50(5):661--674, 2013.

\bibitem{drton:2018}
Mathias Drton.
\newblock Algebraic problems in structural equation modeling.
\newblock In {\em The 50th anniversary of {G}r\"{o}bner bases}, volume~77 of
  {\em Adv. Stud. Pure Math.}, pages 35--86. Math. Soc. Japan, Tokyo, 2018.

\bibitem{drton:robeva:weihs:2020}
Mathias Drton, Elina Robeva, and Luca Weihs.
\newblock Nested covariance determinants and restricted trek separation in
  {G}aussian graphical models.
\newblock {\em Bernoulli}, 26(4):2503--2540, 2020.

\bibitem{haase2021existence}
C.~Haase.
\newblock {\em Existence of Unimodular Triangulations--Positive Results}.
\newblock Memoirs of the American Mathematical Society. American Mathematical
  Society, 2021.

\bibitem{handbook:chap3}
Thomas Kahle, Johannes Rauh, and Seth Sullivant.
\newblock Algebraic aspects of conditional independence and graphical models.
\newblock In {\em Handbook of graphical models}, Chapman \& Hall/CRC Handb.
  Mod. Stat. Methods, pages 61--80. CRC Press, Boca Raton, FL, 2019.

\bibitem{handbook}
Marloes Maathuis, Mathias Drton, Steffen Lauritzen, and Martin Wainwright,
  editors.
\newblock {\em Handbook of graphical models}.
\newblock Chapman \& Hall/CRC Handbooks of Modern Statistical Methods. CRC
  Press, Boca Raton, FL, 2019.

\bibitem{robeva2020}
Elina Robeva and Jean-Baptiste Seby.
\newblock Multi-trek separation in linear structural equation models.
\newblock {\em SIAM J. Appl. Algebra Geom.}, 5(2):278--303, 2021.

\bibitem{shimizu:lingam:2006}
Shohei Shimizu, Patrik~O. Hoyer, Aapo Hyv\"{a}rinen, and Antti Kerminen.
\newblock A linear non-{G}aussian acyclic model for causal discovery.
\newblock {\em J. Mach. Learn. Res.}, 7:2003--2030, 2006.

\bibitem{shimizu:directlingam:2011}
Shohei Shimizu, Takanori Inazumi, Yasuhiro Sogawa, Aapo Hyv\"{a}rinen,
  Yoshinobu Kawahara, Takashi Washio, Patrik~O. Hoyer, and Kenneth Bollen.
\newblock Direct{L}i{NGAM}: a direct method for learning a linear
  non-{G}aussian structural equation model.
\newblock {\em J. Mach. Learn. Res.}, 12:1225--1248, 2011.

\bibitem{handbook:chap:spirtes}
Peter Spirtes and Kun Zhang.
\newblock Search for causal models.
\newblock In {\em Handbook of graphical models}, Chapman \& Hall/CRC Handb.
  Mod. Stat. Methods, pages 439--469. CRC Press, Boca Raton, FL, 2019.

\bibitem{sturmfels1996grobner}
B.~Sturmfels.
\newblock {\em Grobner Bases and Convex Polytopes}.
\newblock Memoirs of the American Mathematical Society. American Mathematical
  Society, 1996.

\bibitem{sullivant2008}
Seth Sullivant.
\newblock Algebraic geometry of {G}aussian {B}ayesian networks.
\newblock {\em Advances in Applied Mathematics}, 40(4):482--513, 2008.

\bibitem{seth:book}
Seth Sullivant.
\newblock {\em Algebraic statistics}, volume 194 of {\em Graduate Studies in
  Mathematics}.
\newblock American Mathematical Society, Providence, RI, 2018.

\bibitem{MR2662356}
Seth Sullivant, Kelli Talaska, and Jan Draisma.
\newblock Trek separation for {G}aussian graphical models.
\newblock {\em Ann. Statist.}, 38(3):1665--1685, 2010.

\bibitem{VanOmmenMooij_UAI_17}
Thijs van Ommen and Joris~M. Mooij.
\newblock Algebraic equivalence of linear structural equation models.
\newblock In {\em Proceedings of the 33rd Annual Conference on {U}ncertainty in
  {A}rtificial {I}ntelligence ({UAI}-17)}, 2017.

\bibitem{wang:drton:2020}
Y.~Samuel Wang and Mathias Drton.
\newblock High-dimensional causal discovery under non-{G}aussianity.
\newblock {\em Biometrika}, 107(1):41--59, 2020.

\end{thebibliography}

\end{document}